\theoremstyle{plain}
\newtheorem{theorem}{Theorem}[section]
\newtheorem{proposition}[theorem]{Proposition}
\newtheorem{corollary}[theorem]{Corollary}
\newtheorem{lemma}[theorem]{Lemma}
\theoremstyle{definition}
\newtheorem{remark}[theorem]{Remark}
\newtheorem{question}[theorem]{Question}
\newtheorem{example}[theorem]{Example}
\newcommand{\abs}[1]{\lvert#1\rvert}
\newcommand{\norm}[1]{\lVert#1\rVert}
\newcommand{\bigabs}[1]{\bigl\lvert#1\bigr\rvert}
\newcommand{\term}[1]{{\textit{\textbf{#1}}}}   % To introduce a term
\renewcommand{\mid}{\::\:}
\newcommand{\goesu}{\xrightarrow{\mathrm{u}}}	% u-convergence
\newcommand{\goesuo}{\xrightarrow{\mathrm{uo}}}	% uo-convergence
\DeclareSymbolFont{bbold}{U}{bbold}{m}{n}
\DeclareSymbolFontAlphabet{\mathbbold}{bbold}
\def\one{\mathbbold{1}}
\DeclareMathOperator{\Range}{Range}
\DeclareMathOperator{\Span}{span}
\DeclareMathOperator{\codim}{codim}
\renewcommand{\le}{\leqslant}
\renewcommand{\ge}{\geqslant}
\begin{document}

\title{Uniformly closed sublattices of finite codimension}

\author{Eugene Bilokopytov}
\email{bilokopy@ualberta.ca}

\author{Vladimir G. Troitsky}
\email{troitsky@ualberta.ca}

\address{Department of Mathematical and Statistical Sciences,
  University of Alberta, Edmonton, AB, T6G\,2G1, Canada.}

\thanks{The second author was supported by an NSERC grant.}
\keywords{vector lattice, Banach lattice, sublattice, uniform convergence, continuous functions}
\subjclass[2010]{Primary: 46A40. Secondary: 46B42,46E05}
% 46B42 Banach lattices
% 46A40 Ordered topological linear spaces, vector lattices
% 46E05 Lattices of continuous, differentiable or analytic functions

\date{\today}

\begin{abstract}
  The paper investigates uniformly closed subspaces, sublattices, and
  ideals of finite codimension in Archimedean vector lattices. It is
  shown that every uniformly closed subspace (or sublattice) of finite
  codimension may be written as an intersection of uniformly closed
  subspaces (respectively, sublattices) of codimension one. Every
  uniformly closed sublattice of codimension $n$ contains a uniformly
  closed ideal of codimension at most $2n$. If the vector lattice is
  uniformly complete then every ideal of finite codimension is
  uniformly closed.  Results of the paper extend (and are
  motivated by) results of~\cite{Abramovich:90a,Abramovich:90b}, as
  well as Kakutani's characterization of closed sublattices of $C(K)$
  spaces.
\end{abstract}

\maketitle

\section{Introduction and preliminaries}

This paper merges two lines of investigation. The first line goes back
to the celebrated Krein-Kakutani Theorem \cite{Krein:40,Kakutani:41}
that every Archimedean vector lattice with a strong unit can be represented as a
norm dense sublattice of $C(K)$ for some compact Hausdorff space $K$.
Furthermore, Kakutani in~\cite{Kakutani:41} completely characterized
closed sublattices of $C(K)$ spaces.

The second line was initiated by Abramovich and Lipecki
in~\cite{Abramovich:90a,Abramovich:90b}, where they studied
sublattices and ideals of finite codimension in vector and Banach
lattices. They proved, in particular, that every vector lattice has
sublattices of all finite codimensions and that every finite
codimensional ideal in a Banach lattice is closed.

We start our paper by revisiting Kakutani's characterization of closed
sublattices of $C(K)$ spaces. We provide an easier proof of the
characterization; we also extend it from $C(K)$ to $C(\Omega)$, where $\Omega$
need not be compact. We characterize closed sublattices of Banach
sequence spaces.

In the rest of the paper, we investigate uniformly closed subspaces,
sublattices, and ideals of finite codimension in an arbitrary
Archimedean vector lattice $X$. By ``uniformly closed'', we mean
``closed with respect to relative uniform convergence''.
While~\cite{Abramovich:90a,Abramovich:90b} use algebraic
techniques based on prime ideals, our approach is to use
Krein-Kakutani Representation theorem to reduce the problems to the
case of $C(\Omega)$ spaces, and then use Kakutani's characterization of
closed sublattices there. In a certain sense, it is the same approach
because one can use the technique of prime ideals to prove
Krein-Kakutani Representation Theorem. However, using the theorem
makes proofs easier and more transparent. We extend several of the
results of~\cite{Abramovich:90a,Abramovich:90b} about topologically
closed sublattices in Banach and F-lattices to uniformly closed
sublattices in vector lattices.

For a linear functional $\varphi$ on $X$, we relate properties of
$\varphi$ to those of the one codimensional subspace
$\ker\varphi$. Namely, we show that $\varphi$ is positive or negative
if and only if $\ker\varphi$ is full, $\varphi$ is order bounded if and only if
$\ker\varphi$ is uniformly closed, and $\varphi$ is a difference of two
lattice homomorphisms if and only if $\ker\varphi$ is a uniformly closed
sublattice. We show that every uniformly closed subspace (or
sublattice) of finite codimension may be written as an intersection of
uniformly closed subspaces (respectively, sublattices) of codimension
one. We show that every uniformly closed sublattice of codimension $n$
contains a uniformly closed ideal of codimension at most $2n$. We
prove that if $X$ is uniformly complete then every ideal of finite
codimension is uniformly closed.

Throughout the paper, $X$ stands for an Archimedean vector
lattice. For background on vector lattices, we refer the reader
to~\cite{Aliprantis:06,Luxemburg:71,Meyer-Nieberg:91}.

\medskip

We will now provide a brief overview of standard linear algebra facts
on subspaces of finite codimension that will be used throughout the
paper.  Let $E$ be a vector space over $\mathbb R$. By a functional on
$E$ we always mean a linear functional.  We write $E'$ for the linear
dual of $E$. For a subset $B$ of $E'$, we write $B_\perp$ for the
pre-annihilator of $B$, i.e.,
$B_\perp=\bigl\{x\in E\mid\forall\varphi\in B\ \varphi(x)=0\bigr\}$.
For a subspace $F$ of $E$, we say that $F$ is of codimension $n$ in
$E$ if $\dim E/F=n$. Equivalently, $F\cap G=\{0\}$ and $F+G=E$ for
some subspace $G$ with $\dim G=n$. Equivalently, there exist linear
independent functionals $\varphi_1,\dots,\varphi_n$ in $E'$ such that
$F=\bigcap_{i=1}^n\ker\varphi_i=\{\varphi_1,\dots,\varphi_n\}_\perp$. It
follows that if $\codim F=n$ and $G\subseteq E$ is another subspace
then the codimension of $F\cap G$ in $G$ is at most $n$.  If
$G\subseteq F$ then the codimension of $F$ in $E$ equals the
codimension of $F/G$ in $E/G$. We write $F^\perp$ for the annihilator
of $F$ in $E'$, i.e., for the set of all functionals in $E'$ that
vanish on $F$. However, when $E$ is a topological vector space, one
often writes $F^\perp$ for the annihilator of $F$ in the topological
dual $E^*$ of $E$, i.e., the set of all functionals in $E^*$ that
vanish on $F$. Every time we use the $F^\perp$ notation, it will be
clear from the context whether we take the annihilator in $E'$ or in
$E^*$.  In particular, let $F$ be a closed subspace of a normed space
$E$, then the functionals $\varphi_1,\dots,\varphi_n$ above may be
chosen in the topological dual $E^*$; in this case,
$\codim F=\dim F^\perp$, where $F^\perp$ is the annihilator of $F$ in
$E^*$.

\begin{lemma}\label{codim-compl}
  Let $F$ be a closed subspace of codimension $n$ in a normed space
  $E$. Then the closure $\overline{F}$ of $F$ in the completion
  $\overline{E}$ of $E$ has codimension $n$.
\end{lemma}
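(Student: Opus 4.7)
The plan is to build a finite-dimensional algebraic complement of $\overline{F}$ inside $\overline{E}$ having dimension exactly $n$. First I would pick an algebraic complement $G$ of $F$ in $E$, so that $E=F\oplus G$ with $\dim G=n$, and look at the associated projection $P\colon E\to G$ along $F$. Because $F$ is closed in $E$, the quotient $E/F$ is an $n$-dimensional normed space; the restriction of the quotient map to $G$ is a linear bijection between two $n$-dimensional normed spaces and therefore a topological isomorphism. Composing, $P$ is bounded.

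Next, since $G$ is finite-dimensional it is complete, hence sits as a closed subspace of $\overline{E}$. The bounded map $P$ extends by continuity to $\overline{P}\colon\overline{E}\to G$, still a projection onto $G$. I would then identify $\ker\overline{P}$ with $\overline{F}$: the inclusion $\overline{F}\subseteq\ker\overline{P}$ is clear by continuity of $\overline{P}$, and conversely, if $x\in\ker\overline{P}$ is approximated by $x_k\in E$ then $x_k-P(x_k)\in F$ and $x_k-P(x_k)\to x-\overline{P}(x)=x$, so $x\in\overline{F}$. This yields $\overline{E}=\overline{F}\oplus G$ as a topological direct sum, and therefore $\codim\overline{F}=\dim G=n$.

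A cleaner dual route that avoids choosing a complement is to pick linearly independent $\varphi_1,\dots,\varphi_n\in E^*$ with $F=\bigcap_{i=1}^n\ker\varphi_i$, extend each by continuity to $\tilde\varphi_i\in\overline{E}^*$, note that the $\tilde\varphi_i$ remain linearly independent (any relation restricts to a nontrivial relation on the dense subspace $E$), and observe that $\overline{F}\subseteq\bigcap\ker\tilde\varphi_i$ gives $\codim\overline{F}\ge n$; the reverse inequality $\codim\overline{F}\le n$ then follows from the direct-sum argument above, or equivalently from $E+\overline{F}=\overline{E}$ combined with $\codim_E F=n$.

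The only real obstacle is ensuring that no dimension is gained or lost upon passing to the closure. Dimensions cannot drop because the extended functionals $\tilde\varphi_i$ stay independent; dimensions cannot grow because the finite-dimensional complement $G$, being automatically complete, absorbs all new limits coming from $\overline{E}\setminus E$ via the continuous projection. Hardness beyond this is limited to the standard observation that closedness of $F$ together with finite-dimensionality of $G$ forces the projection onto $G$ to be bounded.
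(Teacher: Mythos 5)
Your argument is correct, but it goes by a genuinely different route than the paper. The paper works entirely on the dual side: the annihilator $F^\perp$ of $F$ in $E^*$ has dimension $n$, the canonical isometric isomorphism $E^*\cong\overline{E}^*$ carries it onto the annihilator of $F$ (equivalently, of $\overline{F}$) in $\overline{E}^*$, and then the Hahn--Banach identity $(F^\perp)_\perp=\overline{F}$ gives $\codim\overline{F}=\dim F^\perp=n$ in one stroke. You instead construct a concrete complemented decomposition: the closedness of $F$ makes the finite-rank projection $P$ of $E$ onto an algebraic complement $G$ bounded, and extending $P$ to $\overline{E}$ (which lands in $G$ because $G$ is complete) yields $\overline{E}=\overline{F}\oplus G$ after the verification that $\ker\overline{P}=\overline{F}$; all steps check out. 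Your ``dual route'' is essentially the paper's proof, except that extending the $\varphi_i$ only hands you $\codim\overline{F}\ge n$ directly, and you fall back on the projection argument for the reverse inequality, whereas the paper gets equality immediately from $(F^\perp)_\perp=\overline{F}$. The trade-off: your primary argument is more elementary and self-contained (no bipolar-type identity, only the boundedness of finite-rank projections with closed kernel and extension by density into a complete range), at the cost of being longer; the paper's is shorter but leans on the standard duality facts for the pair $(\overline{E},\overline{E}^*)$.
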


\begin{proof}
  The annihilator of $F$ in $E^*$ has dimension $n$. The canonical
  isometric isomorphism between $E^*$ and $\overline{E}^*$ that sends
  every functional in $E^*$ to its unique extension in
  $\overline{E}^*$ maps this annihilator to the annihilator of $F$ in
  $\overline{E}^*$; hence the latter also has dimension $n$. It
  follows that $\codim(F^\perp)_\perp=n$, where both operations
  are performed with respect to the dual pair
  $(\overline{E},\overline{E}^*)$. Finally, recall that
  $(F^\perp)_\perp=\overline{F}$.
\end{proof}

\section{Closed sublattices of $C(\Omega)$}
\label{CK-cl-sublat}

In this section, we revisit Kakutani's characterization of closed
sublattices of $C(K)$ spaces from ~\cite{Kakutani:41}.
Throughout this section, $\Omega$ stands for a completely regular Hausdorff
topological space (also known as a Tychonoff space), which is exactly
the class of Hausdorff spaces where the conclusion of Urysohn's
lemma holds. Recall that locally compact Hausdorff spaces and
normal spaces are completely regular.

We equip the space $C(\Omega)$ of all real valued continuous functions
on $\Omega$ with the compact-open topology, i.e., the topology of
uniform convergence on compact sets.  In the special case of a compact
space $K$, this topology agrees with the supremum norm topology on
$C(K)$.  Note that for every $t\in\Omega$, the point evaluation
functional $\delta_t(f)=f(t)$ is a continuous positive linear
functional on $C(\Omega)$, i.e., $\delta_f\in C(\Omega)^*_+$.  We will
characterize closed sublattices of $C(\Omega)$, as well as describe the
closure of a sublattice of $C(\Omega)$.

We start with some motivation and examples. It can be easily verified
that the set of all functions $f$ in $C[0,1]$ satisfying $f(1)=2f(0)$
forms a closed sublattice. More generally, fix $s\ne t$ in $\Omega$
and a non-negative scalar $\alpha$, and put
$$Y=\bigl\{f\in C(\Omega)\mid f(s)=\alpha f(t)\bigr\};$$ then $Y$ is a
closed sublattice of $C(\Omega)$. Note that $Y=\ker\mu$ where
$\mu\in C(\Omega)^*$ is given by $\mu=\delta_s-\alpha\delta_t$. In
case $\alpha=0$, we just have $\mu=\delta_s$; then $Y$ is
actually an ideal and $t$ is irrelevant.

In the preceding paragraph, $Y$ was determined by a single constraint
$f(s)=\alpha f(t)$. We may generalize this construction to an
arbitrary family of constraints as follows. Consider a collection of
triples $(s,t,\alpha)$, where $s$ and $t$ are two distinct points in
$\Omega$ and $\alpha\ge 0$, and let $Y$ be the set of all functions
$f$ in $C(\Omega)$ such that $f(s)=\alpha f(t)$ for every triple
$(s,t,\alpha)$ in the collection. There is an alternative way to
describe $Y$: let $\mathcal M$ be a family of linear functionals on
$C(\Omega)$ of the form $\mu=\delta_s-\alpha\delta_t$, where the
triple $(s,t,\alpha)$ is in the collection; then
$Y=\bigcap_{\mu\in\mathcal M}\ker\mu=\mathcal M_\perp$. Again, it is
easy to see that $Y$ is a closed sublattice of $C(\Omega)$.

It is proved in Theorem~3
in~\cite{Kakutani:41} that for a compact space $K$, every closed
sublattice $Y$ of $C(K)$ is of this form. That is
\begin{math}
  Y=\mathcal M_\perp,
\end{math}
where $\mathcal M$ consists of all the functionals of the form
$\delta_s-\alpha\delta_t$ with $s,t\in K$ and $\alpha\ge 0$ that
vanish on $Y$. Symbolically,
\begin{displaymath}
  Y=\Bigl(\bigl\{\delta_s-\alpha\delta_t\mid s,t\in K,\ \alpha\ge 0\bigr\}
  \cap Y^\perp\Bigr)_\perp.
\end{displaymath}
We are going to present a short proof of this and other
characterizations of closed sublattices $C(\Omega)$ where $\Omega$ is not
assumed to be compact.

% \begin{lemma}\label{plus-minus}
%   Let $Y$ be a sublattice of $C(\Omega)$.
%   The following three sets are equal:
%   \begin{eqnarray*}
%     Z_1&=&\Bigl(\bigl\{\alpha\delta_s+\beta\delta_t \mid s,t\in\Omega,\
%            \alpha,\beta\in\mathbb R\bigr\} \cap Y^\perp\Bigr)_\perp,\\
%     Z_2&=&\Bigl(\bigl\{\alpha\delta_s-\beta\delta_t \mid s,t\in\Omega,\
%            \alpha,\beta\ge 0\bigr\} \cap Y^\perp\Bigr)_\perp,\\
%     Z_3&=&\Bigl(\bigl\{\delta_s-\gamma\delta_t \mid s,t\in\Omega,\
%            \gamma\ge 0\bigr\} \cap Y^\perp\Bigr)_\perp.
%   \end{eqnarray*}
% \end{lemma}

% \begin{proof}
%   Each $Z_i$ is of the form $(A_i\cap Y^\perp)_\perp$. Since
%   $A_1\supseteq A_2\supseteq A_3$, we have
%   $Z_1\subseteq Z_2\subseteq Z_3$. Let $f\in Z_3$; we will show that
%   $\varphi\in Z_1$. Let $\varphi\in A_1$, i.e.,
%   $\varphi =\alpha\delta_s+\beta\delta_t$ for some
%   $\alpha,\beta\in\mathbb R$ and $\varphi\in Y^\perp$; we need to
%   show that $\varphi(f)=0$.

%   If either $\alpha$ or $\beta$ equals zero then $\phi\in A_3$ or
%   $-\varphi\in A_3$, hence $\varphi(f)=0$. Suppose now
%   $\alpha\ne 0\ne\beta$. WLOG, $\alpha=1$. If either $\delta_s$ or
%   $\delta_t$ is in $Y^\perp$ then the other one is also in $Y^\perp$
%   because $\varphi\in Y^\perp$; it follows that
%   $\delta_s,\delta_t\in A_3$ and, therefore, $\varphi(f)=0$. Suppose
%   now that $\delta_s,\delta_t\notin Y^\perp$. Then there exists
%   $g\in Y_+$ such that $g(s),g(t)>0$. It follows from
%   $g(s)+\beta g(s)=\varphi(g)=0$ that $\beta<0$, hence
%   $\varphi\in A_3$ and, therefore, $\varphi(f)=0$.
% \end{proof}

\begin{theorem}\label{CK-sublat-closure}
  Let $Y$ be a sublattice of $C(\Omega)$. Each of
  the following sets equals $\overline{Y}$.
  \begin{eqnarray*}
    Z_1&=&\Bigl\{f\in C(\Omega)\mid \forall\mbox{ finite }F\subseteq\Omega\
           \exists g\in Y,\mbox{ $f$ and $g$ agree on }F\Bigr\},\\
    Z_2&=&\Bigl\{f\in C(\Omega)\mid \forall s,t\in\Omega\
           \exists g\in Y,\ f(s)=g(s)\mbox{ and }f(t)=g(t)\Bigr\},\\
    Z_3&=&\Bigl(\bigl\{\delta_s-\gamma\delta_t \mid s,t\in\Omega,\
           \gamma\ge 0\bigr\} \cap Y^\perp\Bigr)_\perp,\\
    % Z_4&=&\Bigl(\bigl\{\alpha\delta_s-\beta\delta_t \mid s,t\in\Omega,\
    %        \alpha,\beta\ge 0\bigr\} \cap Y^\perp\Bigr)_\perp,\\
    % Z_5&=&\Bigl(\bigl\{\alpha\delta_s+\beta\delta_t \mid s,t\in\Omega,\
    %        \alpha,\beta\in\mathbb R\bigr\} \cap Y^\perp\Bigr)_\perp,\\
    Z_4&=&\Bigl(\Span\bigl\{\delta_s\mid s\in\Omega\bigr\}
           \cap Y^\perp\Bigr)_\perp,\\
    Z_5&=&\mbox{the intersection of all closed sublattices of } C(\Omega)\\
      &&\qquad\mbox{of codimension at most 1 that contain $Y$}.
  \end{eqnarray*}
\end{theorem}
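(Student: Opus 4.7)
The plan is to establish the cycle of inclusions
$\overline{Y}\subseteq Z_1\subseteq Z_2\subseteq Z_3\subseteq Z_2\subseteq\overline{Y}$
(giving $\overline{Y}=Z_1=Z_2=Z_3$), and then handle $Z_4$ and $Z_5$ by shorter separate arguments. For $\overline{Y}\subseteq Z_1$, fix $f\in\overline{Y}$ and a finite $F\subseteq\Omega$, and consider the evaluation map $T\colon C(\Omega)\to\mathbb{R}^F$, $Th=\bigl(h(t)\bigr)_{t\in F}$. This map is compact-open continuous, so $Tf\in\overline{T(Y)}=T(Y)$ since $T(Y)$ is a linear subspace of the finite-dimensional space $\mathbb{R}^F$; hence $Tf=Tg$ for some $g\in Y$. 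The inclusion $Z_1\subseteq Z_2$ is immediate from specializing to $F=\{s,t\}$, and $Z_2\subseteq Z_3$ is equally easy: if $\mu=\delta_s-\gamma\delta_t\in Y^\perp$ and $g\in Y$ matches $f$ at $s,t$, then $\mu(f)=\mu(g)=0$.

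The core step is $Z_3\subseteq Z_2$. Given $f\in Z_3$ and $s,t\in\Omega$, apply $T\colon C(\Omega)\to\mathbb{R}^2$, $Th=(h(s),h(t))$. Then $T(Y)$ is a linear sublattice of $\mathbb{R}^2$, hence closed, and a short case analysis shows that the only proper linear sublattices of $\mathbb{R}^2$ are $\{0\}$, the two coordinate axes, and the lines $\{(a,\gamma a):a\in\mathbb{R}\}$ with $\gamma>0$. In each of these cases the annihilator $T(Y)^\perp$ in $(\mathbb{R}^2)^*$ is spanned by functionals of the form $\delta_s-\gamma'\delta_t$ or $\delta_t-\gamma'\delta_s$ with $\gamma'\ge 0$. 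Every such functional lifts to a functional in $Y^\perp$ of the type admitted in the definition of $Z_3$, so it vanishes on $f$ by hypothesis; hence $Tf$ is killed by all of $T(Y)^\perp$, forcing $Tf\in T(Y)$.

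The remaining inclusion $Z_2\subseteq\overline{Y}$ is the lattice form of the Stone--Weierstrass theorem. Given $f\in Z_2$, a compact $K\subseteq\Omega$, and $\varepsilon>0$, for each pair $s,t\in K$ choose $g_{s,t}\in Y$ with $g_{s,t}(s)=f(s)$ and $g_{s,t}(t)=f(t)$. For fixed $s$, the open sets $\{g_{s,t}<f+\varepsilon\}$ cover $K$; a finite subcover produces $g_s=g_{s,t_1}\wedge\cdots\wedge g_{s,t_m}\in Y$ with $g_s<f+\varepsilon$ on $K$ and $g_s(s)=f(s)$. Covering $K$ by the open sets $\{g_s>f-\varepsilon\}$ and taking a corresponding finite join yields $g\in Y$ with $\abs{g-f}<\varepsilon$ on $K$. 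This closes the cycle. Combined with $Z_4\subseteq Z_3$ (the functionals defining $Z_3$ form a subset of those defining $Z_4$, so the pre-annihilators reverse) and $\overline{Y}\subseteq Z_4$ (each functional in $\Span\{\delta_s\}$ is compact-open continuous), this also gives $Z_4=\overline{Y}$.

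Finally, for $Z_5$, the inclusion $\overline{Y}\subseteq Z_5$ is immediate because every closed sublattice containing $Y$ contains $\overline{Y}$. For the reverse inclusion, given $f\notin\overline{Y}=Z_3$, pick $\mu=\delta_s-\gamma\delta_t\in Y^\perp$ with $\mu(f)\ne 0$; then $\ker\mu$ is a closed sublattice of codimension one (as observed in the motivating examples before the theorem) that contains $Y$ but not $f$, so $f\notin Z_5$. The main obstacle in the whole argument is the step $Z_3\subseteq Z_2$, which depends on the explicit description of the closed sublattices of $\mathbb{R}^2$ and on recognizing the ``admissible'' functionals $\delta_s-\gamma\delta_t$ with $\gamma\ge 0$ as exactly those needed to cut out these sublattices; the Stone--Weierstrass step is longer but routine.
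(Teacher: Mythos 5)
Your proposal is correct and follows essentially the same route as the paper's proof: the finite-dimensional-range argument for $\overline{Y}\subseteq Z_1$, the covering/lattice Stone--Weierstrass argument for $Z_2\subseteq\overline{Y}$, and the two-point interpolation analysis for $Z_3\subseteq Z_2$ all match, with $Z_4$ and $Z_5$ handled by the same squeeze between $\overline{Y}$ and $Z_3$. The only cosmetic differences are that you close the cycle via $Z_2\subseteq Z_3$ rather than $\overline{Y}\subseteq Z_3$, and you package the $Z_3\subseteq Z_2$ case analysis as a classification of the linear sublattices of $\mathbb{R}^2$ (which must be verified directly, as you do, to avoid circularity with the later Corollary on sublattices of $\mathbb{R}^n$) instead of the paper's rank argument with the $\abs{g_0}$ trick --- these are the same computation in different clothing.
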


\begin{proof}
  To show that $\overline{Y}\subseteq Z_1$, let $f\in\overline{Y}$ and
  $F=\{t_1,\dots,t_n\}$ in $\Omega$. Define
  $T\colon C(\Omega)\to\mathbb R^n$ via
  $Tg=\bigl(g(t_1),\dots,g(t_n)\bigr)$. Since $T$ is continuous,
  we have $TY\subseteq T\overline{Y}\subseteq\overline{TY}=TY$, hence
  $T\overline{Y}=TY$. It follows that there exists $g\in Y$ with
  $Tf=Tg$. Thus, $f\in Z_1$ and, therefore,
  $\overline{Y}\subseteq Z_1$.

  We trivially have $Z_1\subseteq Z_2$. We will now show that
  $Z_2\subseteq\overline{Y}$. Let $f\in Z_2$, let $\varepsilon>0$ and
  $K\subseteq\Omega$ be compact. It suffices to find $g\in Y$ such
  that $\bigabs{f(t)-g(t)}<\varepsilon$ for all $t\in K$.
  For every
  $s$ and $t$ in $K$ find $g_{s,t}\in Y$ so that $g_{s,t}(s)=f(s)$ and
  $g_{s,t}(t)=f(t)$. Put
  \begin{eqnarray*}
    U_{s,t} & = &
    \bigl\{w\in\Omega\mid g_{s,t}(w)<f(w)+\varepsilon\bigr\},
    \mbox{ and }\\
    V_{s,t} & = &
    \bigl\{w\in\Omega\mid g_{s,t}(w)>f(w)-\varepsilon\bigr\}.
  \end{eqnarray*}
  Clearly, each of these two sets is open and contains both $s$ and
  $t$. It follows that for each $t\in K$ the collection of sets
  $\{U_{s,t}\mid s\in K\}$ is an open cover of $K$. Hence, there is a
  finite subcover, $K=\bigcup_{i=1}^nU_{s_i,t}$. Put
  $g_t=\bigwedge_{i=1}^ng_{s_i,t}$. Then $g_t\in Y$, $g_t(t)=f(t)$,
  and $g_t\le f+\varepsilon\one$ on $K$. Put $V_t=\bigcap_{i=1}^nV_{s_i,t}$.
  Then $V_t$ is open, $t\in V_t$, and $g_t(w)\ge f(w)-\varepsilon$ for
  all $w\in V_t$. Again, the collection $\{V_t\mid t\in K\}$ is an
  open cover of $K$, hence there is a finite subcover
  $K=\bigcup_{j=1}^mV_{t_j}$. Put $g=\bigvee_{j=1}^mg_{t_j}$, then
  $g\in Y$ and $f-\varepsilon\one\le g\le f+\varepsilon\one$ on $K$.
  It follows that $Z_2\subseteq\overline{Y}$, so that
  $Z_1=Z_2=\overline{Y}$.

  We show next that $Z_3\subseteq Z_2$.  Suppose that $f\in Z_3$; fix
  $s\ne t$ in $\Omega$. We need to find $h\in Y$ such that $f(s)=h(s)$
  and $f(t)=h(t)$. Put $T\colon Y\to\mathbb R^2$ via
  $Th=\bigl(h(s),h(t)\bigr)$. If $\Range T$ is all of $\mathbb R^2$
  then we can find $h\in Y$ such that
  $\bigl(h(s),h(t)\bigr)=Th=\bigl(f(s),f(t)\bigr)$. If
  $\Range T=\{0\}$ then both $\delta_s$ and $\delta_t$ vanish on $Y$; it
  follows that they vanish on $f$ because $f\in Z_3$, and we can take
  $h=0$. Finally, suppose that $T$ has rank 1. Then, possibly after
  interchanging $s$ and $t$, we find $\alpha\in\mathbb R$ such that
  $g(s)=\alpha g(t)$ for all $g\in Y$. It also follows that
  $g_0(t)\ne 0$ for some $g_0\in Y$. Since $\abs{g_0}\in Y$, we have
  $\abs{g_0(s)}=\alpha\abs{g_0(t)}$, which yields $\alpha\ge 0$. Let
  $\mu:=\delta_s-\alpha\delta_t$. Since $f\in Z_3$, we have
  $f\in\ker\mu$. We take $h$ to be a scalar multiple of $g_0$ so
  that $h(t)=f(t)$. It now follows that
  $h(s)=\alpha h(t)=\alpha f(t)=f(s)$. Hence, $f\in Z_2$. We now
  conclude from
  $\overline{Y}\subseteq Z_3\subseteq Z_2=\overline{Y}$
  that $Z_3=\overline{Y}$.

  It is easy to see that each $Z_4$ and $Z_5$ contains $\overline{Y}$
  and is contained in $Z_3$. Since we already know that
  $Z_3=\overline{Y}$, this completes the proof.
\end{proof}

\begin{corollary}\label{CK-sublat-closed}
  Closed sublattices of $C(\Omega)$ are the sets of the form $\mathcal M_\perp$,
  where $\mathcal M$ is a collection of functionals of the form
  $\delta_s-\alpha\delta_t$ where $s,t\in\Omega$ and
  $\alpha\ge 0$.
\end{corollary}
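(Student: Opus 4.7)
The plan is to read the corollary directly off of Theorem~\ref{CK-sublat-closure}. There are two directions to verify: every closed sublattice has the stated form, and every set of that form is a closed sublattice.

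For the ``only if'' direction, I would simply apply the theorem. If $Y$ is a closed sublattice of $C(\Omega)$, then $Y = \overline{Y} = Z_3$, so
\[
  Y = \Bigl(\bigl\{\delta_s - \gamma\delta_t \mid s,t\in\Omega,\ \gamma\ge 0\bigr\} \cap Y^\perp\Bigr)_\perp,
\]
and taking $\mathcal M := \bigl\{\delta_s - \alpha\delta_t \mid s,t\in\Omega,\ \alpha\ge 0\bigr\} \cap Y^\perp$ exhibits $Y$ as $\mathcal M_\perp$ with $\mathcal M$ of the required form.

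For the converse, I would start with an arbitrary family $\mathcal M$ of functionals of the form $\delta_s - \alpha\delta_t$ with $s,t \in \Omega$ and $\alpha \ge 0$, and check that $Y := \mathcal M_\perp = \bigcap_{\mu \in \mathcal M}\ker\mu$ is a closed sublattice. Since each point evaluation $\delta_r$ is continuous in the compact-open topology, every $\mu = \delta_s - \alpha\delta_t$ is a continuous functional, so each $\ker\mu$ is a closed subspace, and hence so is $Y$. The sublattice property reduces to the single observation that if $f(s) = \alpha f(t)$ with $\alpha\ge 0$, then $\abs{f}(s) = \abs{f(s)} = \alpha\abs{f(t)} = \alpha\abs{f}(t)$, so $\ker\mu$ is closed under $\abs{\cdot}$, and an intersection of sublattices is a sublattice.

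There is no real obstacle here: the whole statement is a repackaging of $\overline{Y} = Z_3$ from Theorem~\ref{CK-sublat-closure}. The only ``content'' step is the lattice-closure of $\ker(\delta_s - \alpha\delta_t)$, which is where the sign hypothesis $\alpha\ge 0$ is genuinely used; without it the modulus identity above would fail and the converse direction would collapse.
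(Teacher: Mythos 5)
Your proposal is correct and follows exactly the route the paper intends: the forward direction is the identity $\overline{Y}=Z_3$ from Theorem~\ref{CK-sublat-closure}, and the converse is the elementary verification (which the paper records before the theorem with ``it is easy to see that $Y$ is a closed sublattice'') that each $\ker(\delta_s-\alpha\delta_t)$ with $\alpha\ge 0$ is a closed sublattice. Your remark that $\alpha\ge 0$ is precisely what makes the modulus identity work is the right observation and matches the role this hypothesis plays throughout the section.
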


\begin{corollary}
  Every closed sublattice of $C(\Omega)$ of codimension one is either
  of the form $\bigl\{f\in C(\Omega)\mid f(s)=0\bigr\}$ for some
  $s\in \Omega$ or of the form
  $\bigl\{f\in C(\Omega)\mid f(s)=\alpha f(t)\bigr\}$ for some
  $s\ne t$ in $\Omega$ and $\alpha>0$.
\end{corollary}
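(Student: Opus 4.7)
The plan is to reduce the statement directly to Corollary~\ref{CK-sublat-closed}, which is the version of Kakutani's theorem already established. By that corollary, any closed sublattice $Y$ of $C(\Omega)$ can be written as $Y=\mathcal M_\perp$, where $\mathcal M$ is a set of functionals of the form $\delta_s-\alpha\delta_t$ with $s,t\in\Omega$ and $\alpha\ge 0$. My goal is to extract a single such functional that already cuts out $Y$.

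Since $\codim Y=1$, we have $Y\ne C(\Omega)$, so $\mathcal M$ must contain at least one nonzero functional; call it $\mu=\delta_s-\alpha\delta_t$. Then $Y\subseteq\ker\mu$, and because $\mu$ is a nonzero linear functional on $C(\Omega)$, $\ker\mu$ has codimension exactly one. Combined with $\codim Y=1$, this forces $Y=\ker\mu$. Thus the entire analysis collapses to describing the kernels of functionals of the form $\delta_s-\alpha\delta_t$.

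The remaining step is a short case analysis on the parameters $s,t,\alpha$. If $\alpha=0$, then $\mu=\delta_s$ and $Y=\{f\in C(\Omega)\mid f(s)=0\}$, matching the first form. If $s=t$, then $\mu=(1-\alpha)\delta_s$; the case $\alpha=1$ is excluded because it would make $\mu=0$, and otherwise $\mu$ is a nonzero multiple of $\delta_s$, placing us again in the first form. Finally, if $s\ne t$ and $\alpha>0$, then $Y=\ker\mu=\{f\in C(\Omega)\mid f(s)=\alpha f(t)\}$, which is the second form.

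There is no real obstacle here; the work has all been done in the preceding corollary, and the only genuine task is to notice that a single nonzero functional in $\mathcal M$ already suffices once the codimension is one, and then to eliminate the degenerate parameter configurations. The only mild subtlety worth stating explicitly is the verification that the case $s=t$ reduces to a point-evaluation constraint rather than producing a spurious third form.
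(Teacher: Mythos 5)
Your proposal is correct and follows exactly the route the paper intends: the corollary is stated without proof as an immediate consequence of Corollary~\ref{CK-sublat-closed}, and your observation that a single nonzero $\mu\in\mathcal M$ already has kernel of codimension one (forcing $Y=\ker\mu$), followed by the case analysis on $\alpha=0$, $s=t$, and $s\ne t$ with $\alpha>0$, is precisely the missing routine verification.
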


\begin{remark}\label{equiv-classes}
  Let $Y$ be a sublattice of $C(\Omega)$. By
  Theorem~\ref{CK-sublat-closure}, $\overline{Y}=\mathcal M_\perp$
  where
  \begin{math}
    \mathcal M=\bigl\{\delta_s-\alpha\delta_t \mid s,t\in\Omega,\
           \alpha\ge 0\bigr\} \cap Y^\perp.
  \end{math}
  Let $\ker Y=\bigl\{t\in\Omega\mid\delta_t\in\mathcal M\bigr\}$. That
  is, $t\in\ker Y$ if and only if $f(t)=0$ for all $f\in Y$. It follows from
  $\ker Y=\bigcap_{f\in Y}\ker f$ that $\ker Y$ is closed. For any
  $s,t\in\Omega\setminus\ker Y$, we set $s\sim t$ if there exists
  $\alpha\ne 0$ such that $f(s)=\alpha f(t)$ for all $f\in Y$.  Note
  that $\alpha$ has to be positive. Indeed, since $s,t\notin\ker Y$,
  there exist $f,g\in Y$ with $f(s)\ne 0$ and $g(t)\ne 0$. Put
  $h=\abs{f}\vee\abs{g}$; then $h\in Y$. It follows from
  $h(s)=\alpha h(t)$ and $h(s),h(t)>0$ that $\alpha>0$. It also
  follows that $\alpha$ is uniquely determined by $s$ and $t$. One can
  easily verify that $s\not\sim t$ if and only if there exists $f\in Y_+$ such
  that $f(s)=1$ and $f(t)=0$. This yields that
  $s\sim t$ is equivalent to the following condition: $f(s)=0$ if and only if
  $f(t)=0$ for all $f\in Y$.

  It is clear that $s\sim t$ if and only if $\delta_s$ and $\delta_t$ are
  proportional on $Y$ and, therefore, on $\overline{Y}$. This
  is an equivalence relation on $\Omega\setminus\ker Y$; hence yields
  a partition of $\Omega\setminus\ker Y$ into equivalence classes
  $\{A_\gamma\}$. We will call these sets the \term{clans} of $Y$.
  Of course, of primary interests are the clans
  consisting of more than one point.

  Let $A_\gamma$ be a clan. Fix $e_\gamma\in Y_+$ which does not
  vanish at some point (and, therefore, at every point) of
  $A_\gamma$. Furthermore, let $f\in Y$ be arbitrary and fix $t\in
  A_\gamma$. For every $s\in A_\gamma$, it follows from $s\sim t$ that
  $\frac{f(s)}{e_\gamma(s)}=\frac{f(t)}{e_\gamma(t)}$. Therefore,
  every function in $Y$ is proportional to $e_\gamma$ on
  $A_\gamma$. It now follows that for $f\in C(\Omega)$, we have
  $f\in\overline{Y}$ if and only if $f$ vanishes on $\ker Y$ and is proportional
  to $e_\gamma$ on $A_\gamma$ for every $\gamma$.
\end{remark}

\begin{corollary}\label{CK-sublat-proport}
  Let $Y$ be a sublattice of $C(\Omega)$. Then there exist disjoint
  subsets $A_0$ and $(A_\gamma)_{\gamma\in\Gamma}$ and functions
  $(e_\gamma)_{\gamma\in\Gamma}$ in $Y$ such that $f\in \overline{Y}$
  if and only if $f$ vanishes on $A_0$ and is proportional to $e_\gamma$ on
  $A_\gamma$ for every $\gamma\in\Gamma$.
\end{corollary}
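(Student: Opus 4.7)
The plan is simply to package the content of Remark \ref{equiv-classes}, whose work has already been done, into the statement of the corollary. The objects $A_0$ and $(A_\gamma)_{\gamma\in\Gamma}$ are supplied directly by that remark, and nothing new needs to be proved beyond checking disjointness and pulling $e_\gamma$ out of $Y$ itself.

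First I would set $A_0:=\ker Y = \bigl\{t\in\Omega \mid f(t)=0\ \text{for all } f\in Y\bigr\}$. Next, on $\Omega\setminus\ker Y$ consider the equivalence relation $s\sim t$ defined in Remark~\ref{equiv-classes} (equivalently: $\delta_s$ and $\delta_t$ are proportional on $Y$, with necessarily positive proportionality constant). Let $(A_\gamma)_{\gamma\in\Gamma}$ be the resulting equivalence classes, i.e., the clans of $Y$. By construction the family $\{A_0\}\cup\{A_\gamma\}_{\gamma\in\Gamma}$ consists of pairwise disjoint subsets of $\Omega$.

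For each $\gamma\in\Gamma$, pick any point $t_\gamma\in A_\gamma$; since $t_\gamma\notin\ker Y$, there is some $f_\gamma\in Y$ with $f_\gamma(t_\gamma)\neq 0$, and then $e_\gamma:=\abs{f_\gamma}\in Y_+$ satisfies $e_\gamma(t_\gamma)>0$. As noted in the remark, $s\sim t_\gamma$ forces $e_\gamma(s)>0$ for every $s\in A_\gamma$, so $e_\gamma$ is nonvanishing on $A_\gamma$.

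Finally, the ``if and only if'' is exactly the last sentence of Remark~\ref{equiv-classes}: for $f\in C(\Omega)$, the condition $f\in\overline{Y}$ is equivalent to $f$ vanishing on $\ker Y=A_0$ together with the proportionality $f(s)/e_\gamma(s)=f(t)/e_\gamma(t)$ for all $s,t\in A_\gamma$ and every $\gamma\in\Gamma$. This equivalence, in turn, rests on Theorem~\ref{CK-sublat-closure} applied to the description $\overline{Y}=\mathcal M_\perp$ with $\mathcal M=\{\delta_s-\alpha\delta_t\mid s,t\in\Omega,\ \alpha\ge 0\}\cap Y^\perp$. There is no real obstacle here; the only minor point to be careful about is to observe that the choice of $e_\gamma$ within a clan is irrelevant for the proportionality condition, since any two such choices differ by a positive scalar on $A_\gamma$.
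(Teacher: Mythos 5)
Your proposal is correct and follows exactly the route the paper intends: the corollary is a direct repackaging of Remark~\ref{equiv-classes}, taking $A_0=\ker Y$, the clans as the $A_\gamma$, and a non-vanishing $e_\gamma\in Y_+$ on each clan, with the equivalence being the remark's final sentence. Nothing is missing.
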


It is easy to see that the converse is also true: every set of this
form is a closed sublattice of $C(\Omega)$.
\medskip

Now consider the special case when $Y$ is an ideal in $C(\Omega)$. It
is easy to see that if $\delta_s-\alpha\delta_t$ vanishes on $Y$ for
some $s\ne t$ and $\alpha>0$ then $\delta_s$ and $\delta_t$ vanish on
$Y$, so that $s,t\in\ker Y$ in the notation of
Remark~\ref{equiv-classes}. It is clear than that all the equivalence
classes in $\Omega\setminus\ker Y$ are singletons and $\mathcal M$
consists of evaluation functionals only.

\begin{corollary}\label{closed-ideal}
  Every closed ideal in $C(\Omega)$ is of the form
  \begin{math}
    J_F=\bigl\{f\in C(\Omega)\mid f_{|F}=0\bigr\}
  \end{math}
  for some closed set $F$. More generally,
  if $J$ is an ideal of $C(\Omega)$ then $\overline{J}=J_F$ where
  $F=\ker J$.
\end{corollary}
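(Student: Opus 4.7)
The plan is to specialize Theorem~\ref{CK-sublat-closure}, in its form $\overline{Y}=Z_3$, to the case where $Y=J$ is an ideal. The key reduction, which I would prove first, is the ``easy to see'' observation stated just before the corollary: if $J$ is an ideal and $\delta_s-\alpha\delta_t\in J^\perp$ with $s\ne t$ and $\alpha>0$, then $\delta_s$ and $\delta_t$ lie in $J^\perp$ individually. To verify this I would invoke complete regularity of $\Omega$ to produce $h\in C(\Omega)$ with $0\le h\le 1$, $h(s)=0$, $h(t)=1$; then for any $f\in J$ the function $h\abs{f}$ still lies in $J$ by the ideal property, and evaluating $\delta_s-\alpha\delta_t$ on $h\abs{f}$ forces $-\alpha\abs{f}(t)=0$, hence $f(t)=0$, and then $f(s)=\alpha f(t)=0$.

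Granted this, I would argue that the set
\begin{math}
\mathcal M=\bigl\{\delta_s-\alpha\delta_t\mid s,t\in\Omega,\ \alpha\ge 0\bigr\}\cap J^\perp
\end{math}
and the evaluation set $\{\delta_t\mid t\in\ker J\}$ have the same pre-annihilator in $C(\Omega)$. The inclusion $\{\delta_t\mid t\in\ker J\}\subseteq\mathcal M$ is immediate (take $\alpha=0$), so any $f$ annihilated by $\mathcal M$ automatically vanishes on $\ker J$. Conversely, if $f$ vanishes on $\ker J$ and $\mu=\delta_s-\alpha\delta_t\in\mathcal M$, the key reduction (together with the trivial subcases $\alpha=0$, $s=t$, or $\mu=0$) shows that every point appearing in $\mu$ lies in $\ker J$, whence $\mu(f)=0$.

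Applying Theorem~\ref{CK-sublat-closure} then yields
\begin{equation*}
\overline{J}=Z_3=\mathcal M_\perp=\bigl\{f\in C(\Omega)\mid f(t)=0\text{ for all }t\in\ker J\bigr\}=J_{\ker J},
\end{equation*}
which is the ``more general'' statement. The set $\ker J=\bigcap_{f\in J}f^{-1}(0)$ is closed as an intersection of closed sets. The first assertion follows by specializing to closed $J$, so that $J=\overline J=J_{\ker J}$.

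I do not anticipate a real obstacle: everything rides on Theorem~\ref{CK-sublat-closure}, and the only new ingredient is the ideal-specific observation that $\delta_s-\alpha\delta_t$ annihilating an ideal collapses to two pure evaluation functionals in $J^\perp$, which is a one-line Urysohn argument using the ideal property.
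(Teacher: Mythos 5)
Your proposal is correct and follows the same route as the paper: the paper's (implicit) proof is precisely the observation in the paragraph preceding the corollary that $\delta_s-\alpha\delta_t\in J^\perp$ with $\alpha>0$ forces $\delta_s,\delta_t\in J^\perp$ for an ideal $J$, so that $\mathcal M$ reduces to evaluations at points of $\ker J$, combined with $\overline{J}=Z_3=\mathcal M_\perp$ from Theorem~\ref{CK-sublat-closure}. Your Urysohn argument with $h\abs{f}\in J$ correctly supplies the detail the paper leaves as ``easy to see.''
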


\begin{corollary}\label{CK-sublat-ideal}
  Every closed sublattice $Y$ of $C(\Omega)$ of codimension $n$ contains a
  closed ideal $J$ of $C(\Omega)$ of codimension at most $2n$.
\end{corollary}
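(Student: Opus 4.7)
The plan is to leverage the structural description of closed sublattices supplied by Remark~\ref{equiv-classes} and Corollary~\ref{CK-sublat-proport}. Since $Y$ is already closed, $\overline{Y}=Y$ is completely determined by $F_0:=\ker Y$ together with the clans $(A_\gamma)_{\gamma\in\Gamma}$ in $\Omega\setminus F_0$ and the distinguished functions $e_\gamma\in Y_+$ governing the proportionality constraint on each clan. The natural candidate ideal is
\[
J:=J_F=\bigl\{f\in C(\Omega)\mid f|_F=0\bigr\},\qquad F:=F_0\cup\bigcup_{\abs{A_\gamma}\ge 2}A_\gamma,
\]
which is a closed ideal by Corollary~\ref{closed-ideal}. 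The inclusion $J\subseteq Y$ is forced by Corollary~\ref{CK-sublat-proport}: any $f\in J$ vanishes on $F_0$ and is identically $0$ on each $A_\gamma$, hence trivially proportional to $e_\gamma$ there.

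The substantive step is the codimension count. First I would argue that $\codim Y=n$ forces $F_0$, the set of non-singleton clans, and each such clan to be finite, and that
\[
n=\abs{F_0}+\sum_{\gamma}\bigl(\abs{A_\gamma}-1\bigr),
\]
with the sum over non-singleton clans. The $\ge$ direction is clear: for each $s\in F_0$ the functional $\delta_s$ lies in $Y^\perp$, and for each non-singleton clan $A_\gamma=\{t_0,t_1,\dots,t_{k-1}\}$ the functionals $\delta_{t_i}-\tfrac{e_\gamma(t_i)}{e_\gamma(t_0)}\delta_{t_0}$ (for $i=1,\dots,k-1$) lie in $Y^\perp$; Urysohn separation in the Tychonoff space $\Omega$ makes this whole list linearly independent. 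The $\le$ direction uses Corollary~\ref{CK-sublat-closed}: $Y^\perp$ is spanned by functionals $\delta_s-\alpha\delta_t\in Y^\perp$, and each such functional is either an evaluation at a point of $F_0$ or witnesses membership in some clan $A_\gamma$, so the listed functionals span $Y^\perp$.

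Once this formula is established, the final inequality is pure arithmetic: for each non-singleton clan one has $\abs{A_\gamma}\le 2(\abs{A_\gamma}-1)$, so
\[
\abs{F}=\abs{F_0}+\sum_{\gamma}\abs{A_\gamma}
\le 2\abs{F_0}+2\sum_{\gamma}\bigl(\abs{A_\gamma}-1\bigr)=2n,
\]
and $\codim J=\codim J_F=\abs{F}$ because evaluation at the finitely many points of $F$ maps $C(\Omega)$ onto $\mathbb{R}^F$ (again by Urysohn). The hard part is cleanly justifying the equality $n=\abs{F_0}+\sum_{\gamma}(\abs{A_\gamma}-1)$; everything else is bookkeeping.
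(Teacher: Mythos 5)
Your proof is correct, but it takes a different route from the paper's. The paper's argument is a two-line selection: since $\codim Y=n$ and $Y=\mathcal M_\perp=(\Span\mathcal M)_\perp$ with $\mathcal M=\{\delta_s-\gamma\delta_t\}\cap Y^\perp$, one can pick $n$ linearly independent functionals $\mu_1,\dots,\mu_n\in\mathcal M$ with $Y=\bigcap_i\ker\mu_i$; each $\mu_i$ involves at most two point evaluations, so intersecting the kernels of those at most $2n$ evaluations gives the desired closed ideal inside $Y$. You instead unpack the clan structure of Remark~\ref{equiv-classes} and prove the exact formula $n=\abs{F_0}+\sum_\gamma(\abs{A_\gamma}-1)$, then do arithmetic. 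Both arguments rest on the same two background facts (that $\Span\mathcal M=Y^\perp$ when $\codim Y=n$, and that evaluations at finitely many distinct points of a Tychonoff space are linearly independent, indeed map $C(\Omega)$ onto $\mathbb R^F$), but the paper never needs the clan decomposition or the finiteness bookkeeping. What your version buys is more information: the precise codimension $\abs{F}$ of the ideal, the finiteness of $\ker Y$ and of the non-singleton clans, and a canonical (rather than basis-dependent) choice of $J$ --- in fact one can check that the paper's $J$ coincides with your $J_F$, since any $n$ linearly independent elements of $\mathcal M$ must involve every point of $F_0$ and every point of each non-singleton clan. The cost is that the ``substantive step'' you flag really does need the two justifications you sketch (independence via Urysohn separation, and the span argument showing no other points or constraints can occur); as written these are correct, so the only thing I would tighten is the observation that a functional $\delta_s-\alpha\delta_t\in Y^\perp$ with $\alpha>0$ and $s\in F_0$ forces $t\in F_0$ as well, so that every element of $\mathcal M$ genuinely falls into one of your two cases.
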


\begin{proof}
  By Theorem~\ref{CK-sublat-closure}, $Y=\mathcal
  M_\perp=(\Span\mathcal M)_\perp$ where
  \begin{displaymath}
    \mathcal M=\bigl\{\delta_s-\gamma\delta_t \mid s,t\in\Omega,\
    \gamma\ge 0\bigr\} \cap Y^\perp.
  \end{displaymath}
  It follows that $Y=\bigcap_{i=1}^n\ker\mu_i$ for some linearly
  independent $\mu_1,\dots,\mu_n\in\mathcal M$, so each $\mu_i$ is the
  form $\delta_s$ or $\delta_s-\alpha\delta_t$ for some $s,t\in\Omega$
  and $\alpha\ge 0$. Put $J$ to be the intersection of the kernels of
  all these $\delta_s$'s and $\delta_t$'s.
\end{proof}

\begin{example}\label{one-clan}
  Let $Y$ be the one-dimensional sublattice of $C[0,1]$ consisting of
  all scalar multiples of $f(t)=t$. Then $\ker Y=\{0\}$ and the only
  clan of $Y$ is $(0,1]$.
\end{example}

Again, let $Y$ be a sublattice of $C(\Omega)$. It is clear that
$\ker Y$ is closed. Example~\ref{one-clan} shows that clans need not
be closed. However, we have the following:

\begin{proposition}
  Let $Y$ be a sublattice of $C(\Omega)$.
  Clans of $Y$ are closed in $\Omega\setminus\ker Y$.
\end{proposition}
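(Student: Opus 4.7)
The plan is to show that if a point $s\in\Omega\setminus\ker Y$ lies in the closure of a clan $A_\gamma$, then $s$ itself belongs to $A_\gamma$. I will work with nets since $\Omega$ is only assumed completely regular Hausdorff, and use the distinguished function $e_\gamma\in Y_+$ from Remark~\ref{equiv-classes} that is strictly positive on $A_\gamma$ as a convenient ``normalizer.''

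Fix a clan $A_\gamma$ and a point $t\in A_\gamma$, and pick $e_\gamma\in Y_+$ with $e_\gamma(t)>0$; then $e_\gamma>0$ on all of $A_\gamma$. Suppose $s\in\Omega\setminus\ker Y$ is a limit point of $A_\gamma$ and let $(s_\beta)$ be a net in $A_\gamma$ with $s_\beta\to s$. For each $\beta$, the relation $s_\beta\sim t$ gives a unique positive scalar $\alpha_\beta$ with $f(s_\beta)=\alpha_\beta f(t)$ for every $f\in Y$; applying this to $e_\gamma$ identifies $\alpha_\beta=e_\gamma(s_\beta)/e_\gamma(t)$.

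By continuity of $e_\gamma$, the net $(\alpha_\beta)$ converges to $\alpha:=e_\gamma(s)/e_\gamma(t)\ge 0$. For any $f\in Y$, continuity of $f$ yields
\begin{equation*}
  f(s)=\lim_\beta f(s_\beta)=\lim_\beta \alpha_\beta f(t)=\alpha f(t).
\end{equation*}
Since $s\notin\ker Y$, there exists $f\in Y$ with $f(s)\ne 0$, which forces $\alpha\ne 0$; hence $\alpha>0$ and $s\sim t$, so $s\in A_\gamma$. This shows $A_\gamma$ is closed in $\Omega\setminus\ker Y$.

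The only subtlety, and the one place where the hypothesis $s\in\Omega\setminus\ker Y$ is essential, is ruling out $\alpha=0$: a priori $e_\gamma$ could vanish at the limit point $s$, but the above computation shows that in that case every $f\in Y$ would vanish at $s$, contradicting $s\notin\ker Y$. This is why the proposition only asserts closedness relative to $\Omega\setminus\ker Y$, not in $\Omega$, consistently with Example~\ref{one-clan}.
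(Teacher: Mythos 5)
Your proof is correct. It takes a somewhat different route from the paper's: the paper exhibits the clan $A_\gamma$ as the trace on $\Omega\setminus\ker Y$ of an explicitly defined closed set $F=\bigl\{w\in\Omega\mid f(w)=\lambda_f e_\gamma(w)\text{ for all }f\in Y\bigr\}$ (where $\lambda_f$ is the constant with $f=\lambda_f e_\gamma$ on $A_\gamma$), and then eliminates extraneous points of $F$ by contradiction, invoking the characterization from Remark~\ref{equiv-classes} that $s\not\sim t$ yields some $f\in Y$ with $f(s)\ne 0$ and $f(t)=0$. You instead verify relative closedness directly with nets: the proportionality constants $\alpha_\beta=e_\gamma(s_\beta)/e_\gamma(t)$ converge by continuity of $e_\gamma$, so the relation $f(s)=\alpha f(t)$ passes to the limit for every $f\in Y$, and the hypothesis $s\notin\ker Y$ rules out $\alpha=0$. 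Both arguments exploit the same rank-one structure of $Y$ restricted to $A_\gamma\cup\{s\}$, normalized by $e_\gamma$, but yours bypasses the separation characterization of $\not\sim$ and is arguably more direct, while the paper's has the small advantage of displaying a concrete closed superset of $A_\gamma$ whose trace on $\Omega\setminus\ker Y$ is exactly the clan. Your closing remark correctly pinpoints where $s\notin\ker Y$ enters and why only relative closedness can be expected, in line with Example~\ref{one-clan}.
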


\begin{proof}
  Fix a clan $A_\gamma$; let $e_\gamma$ be as in
  Remark~\ref{equiv-classes}. We know that for every $f\in Y$ there
  exists $\lambda_f\ge 0$ such that $f$ agrees with
  $\lambda_fe_\gamma$ on $A_\gamma$. Let $F$ be the set of all $t$
  such that $f(t)=\lambda_fe_\gamma(t)$ for all $f\in Y$.
  Clearly, $F$ is closed and contains $A_\gamma\cup\ker Y$. It
  suffices to show that $F\subseteq A_\gamma\cup\ker Y$ as this would
  imply $A_\gamma=F\cap(\Omega\setminus\ker Y)$.  Let
  $s\not\in A_\gamma\cup\ker Y$ but $s\in F$. Then there exists
  $t\in A_\gamma$ such that $t\not\sim s$. It follows that there
  exists $f\in Y$ such that $f(s)\ne 0$ but $f(t)=0$. But the latter
  implies that $\lambda_f=0$. Since $s\in F$, it follows that
  $f(s)=0$, which is a contradiction.
\end{proof}

There are well known characterizations of bands and projection bands
in $C(\Omega)$ spaces; see, e.g., Corollary~2.1.10
in~\cite{Meyer-Nieberg:91} for the case when $\Omega$ is compact and
Corollary~4.5 in~\cite{Kandic:19} for the non-compact case.

\section{Closed sublattices of Banach sequence spaces}

By a Banach sequence space we mean a Banach lattice whose order is
given by a Schauder basis. It is clear that the basis has to be
1-unconditional. One can view a Banach sequence space as a sublattice
of $\mathbb R^{\mathbb N}$, which may also be identified with
$C(\mathbb N)$, where $\mathbb N$ is equipped with the discrete
topology. When talking about a sequence of vectors in a Banach
sequence space, it is sometimes convenient to use subscripts and
superscripts. For example, $x^{(n)}_i$ stands for the $i$-th
coordinate of the $n$-th term of the sequence $(x^{(n)})$. The
following result is folklore; we include the proof for the convenience
of the reader.

\begin{proposition}\label{basis-ocn}\label{bss-ocn}
  Every Banach sequence space is order continuous.
\end{proposition}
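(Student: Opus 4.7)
The plan is to establish $\sigma$-order continuity: every decreasing sequence $y_n\downarrow 0$ in $X$ satisfies $\norm{y_n}\to 0$. For a Banach lattice this is equivalent to order continuity, so the passage from sequences to nets will come for free once this is established. Throughout, let $(e_n)$ denote the defining Schauder basis; because $(e_n)$ is $1$-unconditional the partial-sum projections $P_k$ and their tail complements $Q_k=I-P_k$ are band projections of norm one, while the Schauder basis property gives $\norm{Q_kx}\to 0$ as $k\to\infty$ for every $x\in X$.

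The first key step is to show that every coordinate of $y_n$ tends to zero. For fixed $i$ the sequence $\bigl(y_n^{(i)}\bigr)_n$ is decreasing and non-negative, hence converges to some $c_i\ge 0$. I would next form $z:=\sum_i c_ie_i$ and verify that the series converges in $X$: since $0\le c_i\le y_1^{(i)}$, the $1$-unconditionality of $(e_n)$ controls the tail norms $\bignorm{\sum_{i=n+1}^m c_ie_i}$ by the corresponding tails of $y_1$, which are Cauchy. Once $z\in X$, the order on $X$ is coordinate-wise, so $z\le y_n$ in $X$ for every $n$; because $y_n\downarrow 0$ in $X$ this forces $z\le 0$, hence $c_i=0$ for every $i$.

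Given $\varepsilon>0$, I would choose $k$ with $\norm{Q_ky_1}<\varepsilon/2$. Positivity of $Q_k$ together with the lattice-norm property yields $\norm{Q_ky_n}\le\norm{Q_ky_1}<\varepsilon/2$ for every $n$, since $0\le y_n\le y_1$. On the other hand, $P_ky_n$ lies in the finite-dimensional subspace $\Span\{e_1,\dots,e_k\}$ and all $k$ of its coordinates tend to $0$, so $\norm{P_ky_n}\to 0$. Splitting $y_n=P_ky_n+Q_ky_n$ then gives $\norm{y_n}<\varepsilon$ for all sufficiently large $n$.

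The only subtle point is ensuring that the candidate infimum $z=\sum c_ie_i$ genuinely belongs to $X$, because without this the conclusion $c_i=0$ cannot be drawn inside the vector lattice $X$; this is precisely where $1$-unconditionality of the basis is used in an essential way, and it is what allows the whole infimum argument to close.
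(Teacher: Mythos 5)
Your argument is in substance the same as the paper's: split $y_n=P_ky_n+Q_ky_n$, control the tail by $0\le Q_ky_n\le Q_ky_1$ together with the lattice norm, and control the head by coordinatewise convergence in a finite-dimensional space. You are in fact more careful than the paper on one point: the paper simply asserts that $x^{(\alpha)}\downarrow 0$ forces coordinatewise convergence to $0$, whereas you justify it by exhibiting the coordinatewise infimum $z=\sum_ic_ie_i$ as an element of $X$ (via $1$-unconditionality and $0\le c_i\le y_1^{(i)}$) and concluding $z\le 0$ from $y_n\downarrow 0$; that step deserves the attention you give it.

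The one soft spot is the opening reduction. The blanket claim that $\sigma$-order continuity is equivalent to order continuity \emph{for a Banach lattice} is false in general: the standard equivalence (e.g.\ Meyer--Nieberg, Theorem~2.4.2) is that order continuity is equivalent to $\sigma$-order continuity \emph{together with} $\sigma$-Dedekind completeness. Your reduction is still legitimate here, because the same domination argument you use for $z$ shows that a Banach sequence space is an ideal in $\mathbb R^{\mathbb N}$ and hence Dedekind complete --- but you must say so, since otherwise the passage from sequences to nets does not ``come for free.'' Alternatively, note that your proof runs verbatim for a decreasing net $(x^{(\alpha)})$: fix any $\alpha_0$, observe that the net is eventually dominated by $x^{(\alpha_0)}$, replace $y_1$ by $x^{(\alpha_0)}$ throughout, and the coordinatewise limits $c_i$ exist because decreasing nets of reals bounded below converge. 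This is exactly what the paper does, and it avoids quoting any nontrivial equivalence.
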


\begin{proof}
  Let $X$ be a Banach sequence space; we write $P_n$ for the $n$-th
  basis projection, that is $(P_nx)_i$ equals $x_i$ is $i\le n$ and
  zero otherwise. Let $(x^{(\alpha)})$ be a net in $X$; suppose that
  $x^{(\alpha)}\downarrow 0$; we will prove that
  $\norm{x^{(\alpha)}}\to 0$. Let $\varepsilon>0$. Fix any index
  $\alpha_0$. There exists $n_0$ such that
  $\norm{x^{(\alpha_0)}-P_{n_0}x^{(\alpha_0)}}<\varepsilon$. Since
  $x^{(\alpha)}\downarrow 0$, the net converges to zero coordinate-wise. It
  follows that $P_{n_0}x^{(\alpha)}\to 0$ as
  $\alpha\to\infty$. Therefore, there exists $\alpha_1\ge\alpha_0$
  such that $\norm{P_{n_0}x^{(\alpha_1)}}<\varepsilon$. We conclude
  that
    \begin{displaymath}
      \norm{x^{(\alpha)}}\le\norm{P_{n_0}x^{(\alpha)}}+\norm{(I-P_{n_0})x^{(\alpha)}}
      \le\norm{P_{n_0}x^{(\alpha_1)}}+\norm{(I-P_{n_0})x^{(\alpha_0)}}
      <2\varepsilon.
    \end{displaymath}
    whenever $\alpha\ge\alpha_1$.
\end{proof}

The following theorem extends Theorem~5.2 of~\cite{Radjavi:08}. We
first provide a direct proof of it; we will later show that it may be
easily deduced from a more general statement.

\begin{theorem}\label{sublat-discr}
  Let $X$ be a Banach sequence space. Closed sublattices of $X$ are
  exactly the closed spans of (finite or infinite) disjoint positive
  sequences.
\end{theorem}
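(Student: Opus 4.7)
The plan is to combine the clan machinery from Section~\ref{CK-cl-sublat} with the order continuity of $X$ (Proposition~\ref{bss-ocn}). One direction is routine: if $(u_\gamma)$ is a disjoint positive family in $X$, then its closed linear span is a sublattice because on disjoint vectors one has $\Bigabs{\sum_\gamma \lambda_\gamma u_\gamma}=\sum_\gamma\abs{\lambda_\gamma}u_\gamma$, and this identity passes to norm limits coordinatewise. For the nontrivial inclusion I let $Y$ be a norm-closed sublattice of $X$, view $Y$ also as a sublattice of $C(\mathbb{N})$ with the discrete topology on $\mathbb{N}$, and apply Remark~\ref{equiv-classes} to obtain $\ker Y\subseteq\mathbb{N}$, the partition of $\mathbb{N}\setminus\ker Y$ into clans $(A_\gamma)_{\gamma\in\Gamma}$, and clan representatives $e_\gamma\in Y_+$ that do not vanish on $A_\gamma$.

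The key step is to upgrade each $e_\gamma$ into a $u_\gamma\in Y_+$ with $\supp u_\gamma = A_\gamma$. To this end I consider $\mathcal F_\gamma=\{f\in Y_+ \mid f=e_\gamma \text{ on } A_\gamma\}$; this set is stable under $\wedge$, so its finite infima form a downward directed net in $Y$, dominated by $e_\gamma$. In the Banach sequence space $X$ this net has pointwise infimum $u_\gamma$, and since $X$ is order continuous (Proposition~\ref{bss-ocn}) the net converges to $u_\gamma$ in norm; because $Y$ is norm-closed, $u_\gamma\in Y$. Computing $u_\gamma$ pointwise: on $A_\gamma$ it equals $e_\gamma$ by definition of $\mathcal F_\gamma$, and on $\ker Y$ it vanishes. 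For any $n$ in a different clan $A_{\gamma'}$, Remark~\ref{equiv-classes} furnishes $h\in Y_+$ with $h(n)>0$ and $h=0$ on $A_\gamma$ (using $n\not\sim t$ for every $t\in A_\gamma$); then $(e_\gamma-ch)^+\in\mathcal F_\gamma$ for every $c\ge0$ and it vanishes at $n$ once $c\ge e_\gamma(n)/h(n)$, so the infimum kills $n$. This construction, where order continuity of $X$ is used essentially to keep the infimum inside $Y$, is the main obstacle in the proof.

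The resulting family $(u_\gamma)_{\gamma\in\Gamma}$ is pairwise disjoint and lies in $Y_+$; I claim $Y$ is the closed linear span of $(u_\gamma)$. Since $Y=Y_+-Y_+$, it suffices to approximate an arbitrary $y\in Y_+$. The clan description in Remark~\ref{equiv-classes} forces $y|_{A_\gamma}=\lambda_\gamma e_\gamma|_{A_\gamma}=\lambda_\gamma u_\gamma|_{A_\gamma}$ for some $\lambda_\gamma\ge 0$, and $y=0$ on $\ker Y$; consequently the finite partial sums $\sum_{\gamma\in F}\lambda_\gamma u_\gamma$ increase pointwise to $y$ and are dominated by $y\in X$. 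A final appeal to order continuity (an increasing net with supremum $y$ converges in norm to $y$) places $y$ in the closed span of $(u_\gamma)$, finishing the proof.
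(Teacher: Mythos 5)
Your proof is correct and follows essentially the same route as the paper: clans of $Y$ in $C(\mathbb N)$, order completeness plus order continuity (Proposition~\ref{bss-ocn}) to push a decreasing family inside the closed sublattice and obtain a generator supported exactly on each clan, and order continuity again to recover an arbitrary $y\in Y_+$ from its clan components. The only cosmetic difference is that you take the infimum of the directed family $\mathcal F_\gamma$ where the paper enumerates $\mathbb N\setminus(\ker Y\cup A)$ and builds an explicit decreasing sequence.
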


\begin{proof}
  Let $Y$ be a closed sublattice of $X$.  As in
  Remark~\ref{equiv-classes}, we consider
  $\ker Y=\{i\in\mathbb N\mid\forall y\in Y\ y_i=0\}$; for
  $i,j\notin\ker Y$, we write $i\sim j$ if there exists $\alpha>0$
  such that $y_i=\alpha y_j$ for every $y\in Y$. This is an equivalence
  relation on $\mathbb N\setminus\ker Y$; the equivalence classes are
  the clans of $Y$. Again, as in Remark~\ref{equiv-classes}, if
  $i\not\sim j$ then there exists $y\in Y_+$ such that $y_i=1$ and
  $y_j=0$.

  Fix a clan $A$ and any $n\in A$. Let $P_A$ be the natural basis
  projection onto $A$, that is, $P_Ax$ is the sequence that agrees
  with $x$ on $A$ and vanishes outside of $A$. It is easy to see that
  $P_Ax$ and $P_Ay$ are proportional for any $x,y\in Y$, hence
  $P_A(Y)$ is one-dimensional. Enumerating
  $\mathbb N\setminus(\ker Y\cup A)$, we produce a sequence
  $(y^{(m)})$ in $Y_+$ such that $y^{(m)}_n=1$ for all $m$, and for
  every $j\notin A$ there exists $m$ with $y^{(m)}_j=0$. Replacing
  $y^{(m)}$ with $y^{(1)}\wedge\dots \wedge y^{(m)}$, we may assume
  that $y^{(m)}\downarrow$. Since $X$ is order complete and
  $y^{(m)}\ge 0$, we have $y^{(m)}\downarrow x$ for some $x\in
  X_+$. Since $X$ is order continuous, $y^{(m)}$ converges to $x$ in
  norm, hence $x\in Y$. Also, it follows from $y^{(m)}\downarrow x$
  that $x_n=1$ and $x_i=0$ for all $i\notin A$. Clearly, $x_k\ne 0$
  for all $k\in A$.  This yields that the support of $x$ is $A$. It
  follows that $P_A(Y)=\Span\{x\}$.

  Let $A_1,A_2,\dots$ be the clans of $Y$. For each $n$, fix
  $x^{(n)}\in Y_+$ such that $P_{A_n}(Y)=\Span x^{(n)}$. Clearly,
  $(x^{(n)})$ is a disjoint sequence in $Y$. Take any $z\in Y$, it
  suffices to show that $z\in\Span\{x^{(n)}\}$. Without
  loss of generality, $z\ge 0$ as, otherwise, we can consider $z^+$
  and $z^-$. For each $m$, put $z^{(m)}=P_{A_1}z+\dots+P_{A_m}z$. Then
  $z^{(m)}\in\Span\{x^{(n)}\}$. It follows from $z^{(m)}\uparrow z$
  that that $z^{(m)}$ converge to $z$ in norm; hence
  $z\in\Span\{x^{(n)}\}$.
\end{proof}

The following is an easy corollary of
Theorem~\ref{sublat-discr}; it may also be deduced from
Corollary~\ref{CK-sublat-proport} by taking $\Omega=\{1,\dots,n\}$.

\begin{corollary}\label{sublat-Rn}
  A subspace of $\mathbb R^n$ is a sublattice if and only if it can be
  written as a span of disjoint positive vectors.
\end{corollary}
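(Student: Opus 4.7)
The plan is to identify $\mathbb{R}^n$ with $C(\Omega)$ where $\Omega=\{1,\dots,n\}$ carries the discrete topology (so $\Omega$ is a finite, hence compact, Hausdorff space) and then apply Corollary~\ref{CK-sublat-proport} directly. Since $\mathbb{R}^n$ is finite dimensional, every subspace is automatically closed, so every sublattice of $\mathbb{R}^n$ is a closed sublattice of $C(\Omega)$.

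For the forward implication, let $Y$ be a sublattice of $\mathbb{R}^n$. By Corollary~\ref{CK-sublat-proport} there exist disjoint subsets $A_0$ and $(A_\gamma)_{\gamma\in\Gamma}$ of $\{1,\dots,n\}$ together with vectors $e_\gamma\in Y$ such that $y\in Y$ iff $y$ vanishes on $A_0$ and is proportional to $e_\gamma$ on $A_\gamma$ for every $\gamma\in\Gamma$. Finiteness of $\Omega$ forces $\Gamma$ to be finite. The defining property forces $\supp e_\gamma\subseteq A_\gamma$, so the pairwise disjointness of the $A_\gamma$ yields that the positive vectors $(e_\gamma)_{\gamma\in\Gamma}$ are pairwise disjoint. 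The characterization then reads $Y=\Span\{e_\gamma:\gamma\in\Gamma\}$, which is what is required.

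For the converse, suppose $Y=\Span\{x^{(1)},\dots,x^{(k)}\}$ for disjoint positive vectors $x^{(i)}$. Any $y\in Y$ has the form $y=\sum_{i=1}^k c_ix^{(i)}$, and disjointness yields $\abs{y}=\sum_{i=1}^k\abs{c_i}x^{(i)}\in Y$. Combined with the linearity of $Y$, this shows that $Y$ is a sublattice.

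Since this is a direct specialization of an already-proved structural result, there is no substantial obstacle; the only point deserving attention is that finite dimensionality of the ambient space forces the collection of clans to be finite, so the span in the statement is a genuine finite span (and coincides with the closed span one would obtain by instead invoking Theorem~\ref{sublat-discr}).
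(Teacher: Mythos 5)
Your overall route --- identify $\mathbb{R}^n$ with $C(\Omega)$ for $\Omega=\{1,\dots,n\}$ discrete and specialize Corollary~\ref{CK-sublat-proport} --- is exactly one of the two derivations the paper itself indicates (the paper gives no written proof, remarking only that the statement follows from Theorem~\ref{sublat-discr} or from Corollary~\ref{CK-sublat-proport} with $\Omega=\{1,\dots,n\}$). The converse direction and the observation that every subspace of $\mathbb{R}^n$ is closed are fine.

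There is, however, one false step in the forward direction: the claim that ``the defining property forces $\supp e_\gamma\subseteq A_\gamma$.'' It does not. Corollary~\ref{CK-sublat-proport} only requires each $e_\gamma$ to be an element of $Y$ that is nonvanishing on $A_\gamma$; it may well be nonzero on other clans. Concretely, take $Y=\Span\{(1,1,0),(0,0,1)\}$ in $\mathbb{R}^3$, with clans $A_1=\{1,2\}$, $A_2=\{3\}$ and $A_0=\emptyset$. The choice $e_1=(1,1,1)$, $e_2=(0,0,1)$ satisfies the conclusion of Corollary~\ref{CK-sublat-proport} verbatim (membership in $Y$ is equivalent to being proportional to $e_1$ on $\{1,2\}$ and to $e_2$ on $\{3\}$), yet $\supp e_1=\{1,2,3\}\not\subseteq A_1$ and $e_1\wedge e_2\ne 0$. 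So you cannot conclude disjointness of the $e_\gamma$ as they stand, and $Y$ need not equal $\Span\{e_\gamma\}$ for that choice either. The repair is short and is precisely what the paper does when proving Remark~\ref{sublat-discr-gen}: replace each $e_\gamma$ by $e_\gamma\cdot\one_{A_\gamma}$. This truncation again lies in $Y$ --- it vanishes on $A_0$ and is proportional to $e_{\gamma'}$ on every clan $A_{\gamma'}$ (with constant $1$ when $\gamma'=\gamma$ and $0$ otherwise), so the characterization itself certifies membership --- and the truncated family is genuinely disjoint with $\supp(e_\gamma\cdot\one_{A_\gamma})=A_\gamma$. Since $A_0$ and the clans cover $\{1,\dots,n\}$ and $\Gamma$ is finite, any $y\in Y$ is then the finite sum of its restrictions to the clans, each a multiple of the corresponding truncated generator, giving $Y=\Span\{e_\gamma\cdot\one_{A_\gamma}\}$. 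With this one-line fix your argument is complete.
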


\begin{remark}\label{sublat-discr-gen}
  Consider the following generalization of
  Theorem~\ref{sublat-discr}. Let $\Omega$ be an arbitrary set, $X$ an
  order ideal in $\mathbb R^\Omega$ equipped with a complete lattice
  norm, such that finitely supported functions are dense in $X$ and
  norm convergence in $X$ implies point-wise convergence. We claim
  that every norm closed sublattice of $X$ is the closed span of a
  disjoint family of positive vectors.
\end{remark}

This claim may be proved by adapting the proof of
Theorem~\ref{sublat-discr} above. Instead, we will provide a different
argument that proves both Theorem~\ref{sublat-discr} and
Remark~\ref{sublat-discr-gen} by reducing them to our characterization
of closed sublattices in $C(\Omega)$ is
Section~\ref{CK-cl-sublat}. Recall that a net $(x_\alpha)$ in a vector
lattice $X$ \term{uo-converges} to $x$ if $\abs{x_\alpha-x}\wedge u$
converges to zero in order for every $u\ge 0$. A sublattice is order
closed if and only if it is uo-closed. Furthermore, let $(x_\alpha)$
be a net in a regular sublattice $Y$ of $X$, then $x_\alpha\goesuo 0$
in $X$ if and only if $x_\alpha\goesuo 0$ in $Y$.  We refer the reader
to~\cite{Gao:17} for a review of order convergence, uo-convergence,
and regular sublattices.

\begin{proposition}\label{ocont-COmega}
  Let $X$ be an order continuous Banach lattice, continuously embedded
  as a sublattice into $C(\Omega)$ for some locally compact $\Omega$;
  let $Y$ be a sublattice of $X$. If $Y$ is norm closed in $X$
  then $Y$ is closed in $X$ with respect to the compact-open topology
  of $C(\Omega)$ restricted to $X$. It follows that
  $Y=\overline{Y}^{C(\Omega)}\cap X$.
\end{proposition}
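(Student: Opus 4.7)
The plan is to show that compact-open convergence of a net in $Y$ to a point $x \in X$ can be promoted to norm convergence in $X$, at which point norm-closedness of $Y$ yields $x \in Y$. The final identity $Y = \overline{Y}^{C(\Omega)} \cap X$ is then immediate, since the right-hand side is precisely the closure of $Y$ in $X$ with respect to the induced compact-open topology.

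The crucial auxiliary statement I would prove is a dominated convergence theorem: if $(v_\alpha) \subseteq X$ is order bounded in $X$ by some $u \in X_+$, and $v_\alpha \to v \in X$ uniformly on compact subsets of $\Omega$, then $\|v_\alpha - v\|_X \to 0$. First I would observe that $X$ is a regular sublattice of $C(\Omega)$: a net $x_\alpha \downarrow 0$ in $X$ satisfies $\|x_\alpha\|_X \to 0$ by order continuity, hence $x_\alpha \to 0$ uniformly on compact sets by the continuous embedding, so $\inf_\alpha x_\alpha = 0$ pointwise and therefore in $C(\Omega)$ as well. Consequently order- and uo-convergence in $X$ agree with the corresponding notions in $C(\Omega)$ restricted to $X$, by the transfer principle from~\cite{Gao:17}. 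Because $X$ is order continuous it is Dedekind complete, so the suprema $h_\beta := \sup_{\alpha \ge \beta}\abs{v_\alpha - v}$ exist in $X$ and lie in $[0, 2u]$; the net $(h_\beta)$ is decreasing, and using the uniform-on-compacts convergence together with regularity in $C(\Omega)$ one shows $h_\beta \downarrow 0$ in $X$. Order continuity then gives $\|h_\beta\|_X \to 0$, and since $\abs{v_\alpha - v} \le h_\beta$ for $\alpha \ge \beta$, we conclude $\|v_\alpha - v\|_X \to 0$.

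With this in hand, take $(y_\alpha) \subseteq Y$ with $y_\alpha \to x \in X$ uniformly on compact subsets. Replacing $y_\alpha$ by $y_\alpha^\pm$ and $x$ by $x^\pm$ separately, we may assume $y_\alpha, x \ge 0$. Fix an index $\alpha_0$ and consider $w_\gamma := y_{\alpha_0} \wedge y_\gamma \in Y$. Since $0 \le w_\gamma \le y_{\alpha_0} \in X$ and $w_\gamma \to y_{\alpha_0} \wedge x$ uniformly on compact sets, the auxiliary statement gives $w_\gamma \to y_{\alpha_0} \wedge x$ in norm, so $y_{\alpha_0} \wedge x \in Y$ by norm-closedness; as $\alpha_0$ was arbitrary, $y_\alpha \wedge x \in Y$ for every $\alpha$. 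The net $(y_\alpha \wedge x)_\alpha \subseteq Y$ is then order bounded by $x \in X$ and converges uniformly on compacts to $x$; applying the auxiliary statement once more yields $\|y_\alpha \wedge x - x\|_X \to 0$, so $x \in Y$, completing the proof.

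The main obstacle is verifying that $h_\beta \downarrow 0$ in $X$: because $X$ is only a sublattice (not necessarily an ideal) of $C(\Omega)$, the $X$-supremum $h_\beta$ can strictly exceed the pointwise supremum of the $\abs{v_\alpha - v}$, and continuity constraints could in principle prevent $h_\beta(t)$ from shrinking to zero. The resolution uses the combination of uniform convergence on compacts, local compactness of $\Omega$, and the regularity of $X$ in $C(\Omega)$ established above: for any $\varepsilon > 0$ and any precompact open set $U$, uniform convergence on $\overline{U}$ gives a $\beta_0$ after which $\abs{v_\alpha - v} < \varepsilon$ on $\overline{U}$ for all $\alpha \ge \beta_0$, which constrains the continuous $X$-majorant $h_\beta$ on $U$; covering $\Omega$ by such precompact sets and using regularity to compute the infimum of a decreasing net in $X$ via its pointwise behaviour lets these local estimates cohere into the required global $h_\beta \downarrow 0$.
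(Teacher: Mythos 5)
Your strategy is genuinely different from the paper's: the paper notes that norm closedness plus order continuity make $Y$ uo-closed in $X$, cites Theorem~3.2 of \cite{Bilokopytov:22} for the fact that compact-open convergence is stronger than uo-convergence in $C(\Omega)$, and transfers to $X$ by regularity. You instead reduce to order-bounded nets by taking infima with a fixed element ($w_\gamma=y_{\alpha_0}\wedge y_\gamma$, then $y_\alpha\wedge x$) and invoke a dominated convergence principle. That reduction is correct and attractive, the regularity argument matches the paper's, and the final identity $Y=\overline{Y}^{C(\Omega)}\cap X$ is indeed immediate.

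The gap is exactly at the point you flag: the claim that $h_\beta\downarrow 0$ in $X$. Your resolution asserts that uniform smallness of the $\abs{v_\alpha-v}$ on $\overline{U}$ ``constrains the continuous $X$-majorant $h_\beta$ on $U$,'' but this is unjustified and in general false: the least upper bound of a family computed in a \emph{sublattice} $X$ of $C(\Omega)$ can exceed $\varepsilon$ on an open set even when every member of the family is $\le\varepsilon$ there, because the Urysohn-type truncations needed to push a continuous upper bound down on $U$ while leaving it alone elsewhere need not belong to $X$. (They do belong to $C(\Omega)$, which is why the analogous statement holds for suprema computed in $C(\Omega)$.) Regularity does not rescue this: it only says that an infimum that equals $0$ in $X$ remains $0$ in $C(\Omega)$, it does not let you compute $\inf_\beta h_\beta$ pointwise --- and even in $C(\Omega)$ the infimum of a decreasing net of continuous functions is not the pointwise infimum. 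The auxiliary statement itself is true, but the workable proof runs through $C(\Omega)$ rather than $X$: using local compactness, exhibit an explicit dominating net in $C(\Omega)$, e.g.\ $g_{K,\varepsilon}=2u(1-\psi_K)+\varepsilon\one$ with $\psi_K$ a compactly supported Urysohn function equal to $1$ on $K$, so that $\abs{v_\alpha-v}\goeso 0$ in $C(\Omega)$; then pass to uo-convergence, transfer to $X$ by regularity, recover order convergence in $X$ from the order bound $2u$, and finish with order continuity. That detour is essentially the content of the result the paper imports from \cite{Bilokopytov:22}, so as written your argument does not yet replace it.
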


\begin{proof}
  We claim that $X$ is regular in $C(\Omega)$. Indeed, suppose that
  $x_\alpha\downarrow 0$ in $X$. It follows that $x_\alpha\to 0$ in
  norm of $X$ and, therefore, in the compact-open topology of
  $C(\Omega)$. Since $(x_\alpha)$ is monotone, we have
  $x_\alpha\downarrow 0$ in $C(\Omega)$ by Theorem~2.21c
  of~\cite{Aliprantis:03}. This proves the claim.

  Since $Y$ is norm closed in $X$ and $X$ is order continuous, $Y$ is
  order closed in $X$, and, therefore, uo-closed in $X$. It follows
  easily from Theorem~3.2 in~\cite{Bilokopytov:22} that compact-open
  convergence on $C(\Omega)$ is stronger than uo-convergence in
  $C(\Omega)$. The same holds true for their restrictions to $X$
  because compact-open convergence is topological and $X$ is
  regular. It follows that $Y$ is closed in the restriction of the
  compact-open topology to~$X$.
\end{proof}

We can now deduce the claim in Remark~\ref{sublat-discr-gen}
from Proposition~\ref{ocont-COmega} and Corollary~\ref{CK-sublat-proport}:

\begin{proof}[Proof of Remark~\ref{sublat-discr-gen}]
  As in Proposition~\ref{bss-ocn}, we can show that $X$ is order
  continuous. We equip $\Omega$ with the discrete topology, so that
  $\mathbb R^\Omega=C(\Omega).$ By Proposition~\ref{ocont-COmega}, for
  every closed sublattice $Y$ of $X$, we have $Y=\overline{Y}\cap X$ ,
  where $\overline{Y}$ is the closure of $Y$ in $C(\Omega)$, hence is
  a closed sublattice of $C(\Omega)$. We now apply
  Corollary~\ref{CK-sublat-proport} to $\overline{Y}$; let $A_0$,
  $A_\gamma$ and $e_\gamma$ be as in the corollary.  Note that
  $e_\gamma\cdot\one_{A_\gamma}$ is in $\overline{Y}$ by
  Corollary~\ref{CK-sublat-proport}. Since $X$ is an ideal in
  $C(\Omega)$, we have $e_\gamma\cdot\one_{A_\gamma}$ is in $X$ and,
  therefore, in $Y$ because $Y=\overline{Y}\cap X$.  Hence, replacing
  each $e_\gamma$ with $e_\gamma\cdot\one_{A_\gamma}$, we may assume
  that the collection $(e_\gamma)$ is disjoint.  It follows from
  Corollary~\ref{CK-sublat-proport} that $Y$ is the closed linear span
  of this collection.
\end{proof}

\section{Uniformly closed subspaces of finite codimension}

Throughout the rest of the paper, $X$ stands for an Archimedean vector
lattice.  For $e\in X_+$, we write $I_e$ for the principal ideal of
$e$. For $x\in X$, we put
\begin{displaymath}
  \norm{x}_e=\inf\bigl\{\lambda>0\mid\abs{x}\le\lambda e\bigr\}.
\end{displaymath}
This defines a lattice norm on $I_e$. By Kakutani-Krein representation
theorem, $\bigl(I_e,\norm{\cdot}_e\bigr)$ is lattice isometric to a
dense sublattice of $C(K)$ for some compact Hausdorff space~$K$.

For a net $(x_\alpha)$ in $X$, we say that it converges
\term{relatively uniformly} or just \term{uniformly} to $x\in X$ and
write $x_\alpha\goesu x$ if $\norm{x_\alpha-x}_e\to 0$ for some
$e\in X_+$; this implies, in particular, that $x_\alpha-x\in I_e$ for
all sufficiently large $\alpha$. A subset $A$ of $X$ is said to be
\term{uniformly closed} if it is closed with respect to uniform
convergence.  Equivalently, $A\cap I_e$ is closed in
$\bigl(I_e,\norm{\cdot}_e\bigr)$ for every $e\in X_+$. Actually, it
suffices to verify this condition for every $e$ in a majorizing
sublattice of $X$ because $e\le u$ implies
$\norm{\cdot}_u\le\norm{\cdot}_e$. In a Banach lattice, every
uniformly convergent sequence is norm convergent and every norm
convergent sequence has a uniformly convergent subsequence (see, e.g.,
Lemma~1.1 of~\cite{Taylor:20}). It follows that a subset of a Banach
lattice is uniformly closed if and only if it is norm closed.

Recall that if $J$ is an ideal in $X$ then $X/J$ is a vector lattice
and the quotient map is a lattice homomorphism; $X/J$ is Archimedean
if and only if $J$ is uniformly closed by Theorem~60.2 in~\cite{Luxemburg:71}.

A vector lattice is \term{uniformly
  complete} if for every $e\in X_+$ the ideal $I_e$ is complete in
$\norm{\cdot}_e$ and, as a consequence,
$\bigl(I_e,\norm{\cdot}_e\bigr)$ is lattice isometric to $C(K)$ for
some compact Hausdorff space $K$.

If $K$ is a compact Hausdorff space then (relative) uniform
convergence in $C(K)$ agrees with compact-open convergence, which is,
in this case, the same as the norm convergence, i.e., the
convergence in $\norm{\cdot}_{\one}$ norm. This is no longer true when
$\Omega$ is just a Tychonoff space, which creates an unfortunate clash
of terminologies. In this paper, uniform convergence in $C(\Omega)$
will always be interpreted as the relative uniform convergence rather
than convergence in $\norm{\cdot}_{\one}$. It is easy to see that
uniform convergence in $C(\Omega)$ implies convergence in compact-open
topology, so that sets that are closed in compact-open topology are
uniformly closed.

The converses to both implications in the preceding sentence are
false, as the following two examples show. In the first example, we
construct a net that converges in the compact-open topology but not
uniformly. Identify $\mathbb R^{\mathbb N}$ with $C(\mathbb N)$ and
consider the double sequence $x_{n,m}=(0,\dots,0,n,n,\dots)$, with $m$
zeros at the head. It is easy to see that $(x_{n,m})$ converges to
zero pointwise, hence in the compact-open topology. However, every
tail of this net fails to be order bounded, hence it cannot converge
uniformly. In the second example, we construct a set that is uniformly
closed but fails to be closed in the compact-open topology. Let
$\Omega$ be an uncountable set equipped with the discrete topology,
and let $J$ be the order ideal in $\mathbb R^\Omega=C(\Omega)$
consisting of all functions with countable support. It is easy to see
that $J$ is uniformly closed, yet it is dense in $C(\Omega)$ in the
compact-open topology.

We note, however, that in the special case when $\Omega$ is locally
compact and $\sigma$-compact, i.e., can be written as a union of
countably many compact sets, uniformly closed sets are compact-open
closed. Indeed, it suffices to show that every net that converges in
compact-open topology contains a sequence that converges
uniformly. Since compact-open topology on $C(\Omega)$ is complete and
metrizable by, e.g.,~\cite[pp.~62-64]{Beckenstein:77}, the proof of
this fact is similar to the classical proof that every norm convergent
sequence in a Banach lattice has a uniformly convergent subsequence;
see, e.g., \cite[Lemma~1.1]{Taylor:20} \medskip

It was shown in \cite[Theorem~5.1]{Taylor:20} that an
operator between vector lattices is order bounded if and only if it is
\term{uniformly continuous}, i.e., maps uniformly convergent nets to
uniformly convergent nets.

\begin{proposition}\label{obdd-ker-u-closed}
  A linear functional $\varphi$ on $X$ is order bounded if and only if
  $\ker\varphi$ is uniformly closed.
\end{proposition}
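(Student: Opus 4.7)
The plan is to prove the two implications separately, with the forward direction being a direct invocation of the cited characterization of order bounded operators.

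For the forward direction, assume $\varphi$ is order bounded. By the cited Theorem~5.1 of~\cite{Taylor:20}, $\varphi$ is uniformly continuous as an operator from $X$ into $\mathbb R$. Suppose $(x_\alpha)$ is a net in $\ker\varphi$ with $x_\alpha\goesu x$; then $\varphi(x_\alpha)\goesu\varphi(x)$ in $\mathbb R$. Since uniform convergence in $\mathbb R$ coincides with ordinary convergence, and $\varphi(x_\alpha)=0$ for all $\alpha$, we conclude $\varphi(x)=0$, i.e., $x\in\ker\varphi$.

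For the converse, I will prove the contrapositive: if $\varphi$ is not order bounded then $\ker\varphi$ fails to be uniformly closed. If $\varphi\equiv 0$ there is nothing to prove, so fix $y\in X$ with $\varphi(y)=1$. Since $\varphi$ is not order bounded, there exists $e\in X_+$ such that $\varphi$ is unbounded on the order interval $[-e,e]$; pick a sequence $(x_n)\subseteq[-e,e]$ with $\abs{\varphi(x_n)}\to\infty$, in particular $\varphi(x_n)\ne 0$ for large $n$. Put
\begin{displaymath}
  z_n=y-\frac{x_n}{\varphi(x_n)}.
\end{displaymath}
Then $\varphi(z_n)=1-1=0$, so $z_n\in\ker\varphi$. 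From
\begin{displaymath}
  \Bigabs{\tfrac{x_n}{\varphi(x_n)}}\le\tfrac{1}{\abs{\varphi(x_n)}}e,
\end{displaymath}
it follows that $\norm{z_n-y}_e\to 0$, so $z_n\goesu y$ with regulator $e$. But $\varphi(y)=1$, so $y\notin\ker\varphi$. Hence $\ker\varphi$ is not uniformly closed.

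I do not foresee a real obstacle: the forward direction is essentially a one-line consequence of the cited uniform continuity theorem, and the converse is the standard trick of rescaling an order bounded sequence on which $\varphi$ blows up to extract a uniform null sequence, then translating by a vector with $\varphi(y)=1$. The only small point to be careful about is to handle the trivial case $\varphi\equiv 0$ and to note that $\abs{\varphi(x_n)}\to\infty$ guarantees $\varphi(x_n)\ne 0$ eventually so that $z_n$ is well-defined.
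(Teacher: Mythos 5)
Your proof is correct. The forward direction is identical to the paper's: order boundedness gives uniform continuity via Theorem~5.1 of \cite{Taylor:20}, and kernels of uniformly continuous functionals are uniformly closed. For the converse the paper argues directly rather than contrapositively: it observes that $\ker\varphi\cap I_e$ being closed in $\bigl(I_e,\norm{\cdot}_e\bigr)$ forces the restriction $\varphi_{|I_e}$ to be $\norm{\cdot}_e$-continuous (the standard normed-space fact that a functional with closed kernel is continuous), hence $\varphi$ is uniformly continuous and therefore order bounded. Your contrapositive instead unpacks that standard fact explicitly: unboundedness of $\varphi$ on some $[-e,e]$ is exactly unboundedness on the unit ball of $\bigl(I_e,\norm{\cdot}_e\bigr)$, and your translated, rescaled sequence $z_n=y-x_n/\varphi(x_n)$ is the usual witness that a discontinuous functional has a non-closed kernel. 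What your version buys is self-containedness (no appeal to the closed-kernel criterion for continuity of functionals on normed spaces, and no need to re-invoke the uniform-continuity characterization in the converse direction); what the paper's version buys is brevity. Your handling of the edge cases ($\varphi\equiv 0$, and $\varphi(x_n)\ne 0$ eventually) is appropriate.
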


\begin{proof}
  If $\varphi$ is order bounded, it is uniformly continuous; it
  follows that $\ker\varphi$ is uniformly closed. Conversely, suppose
  that $\ker\varphi$ is uniformly closed. We need to show that
  $\varphi$ is uniformly continuous. It suffices to show that the
  restriction $\varphi_{|I_e}$ of $\varphi$ to $I$ is
  $\norm{\cdot}_e$-continuous for every $e\in X_+$. This is true
  because $\ker\varphi_{|I_e}=\ker\varphi\cap I_e$ is closed in
  $\bigl(I_e,\norm{\cdot}_e\bigr)$.
%  which is
%  equivalent to $\ker\varphi_{|I_e}$ being closed in
%  $\bigl(I_e,\norm{\cdot}_e\bigr)$; the latter is straightforward.
\end{proof}

Since every subspace of codimension one is the kernel of a linear
functional, we get the following:

\begin{corollary}\label{u-subsp-codim-1}
  Uniformly closed subspaces of $X$ of codimension 1 are exactly the
  kernels of order bounded functionals.
\end{corollary}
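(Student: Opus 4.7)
The plan is that this corollary follows essentially immediately from Proposition~\ref{obdd-ker-u-closed}, via the standard linear-algebra identification between codimension-one subspaces and kernels of nonzero linear functionals; the only thing to be careful about is handling the zero functional (whose kernel is all of $X$ rather than a codimension-one subspace).

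First I would recall, from the preliminary discussion in Section~1, that a subspace $F$ of $X$ has codimension one if and only if $F = \ker\varphi$ for some nonzero linear functional $\varphi \in X'$. For the forward implication, assuming $F$ is uniformly closed of codimension one, I write $F = \ker\varphi$ with $\varphi \neq 0$ and apply Proposition~\ref{obdd-ker-u-closed} to conclude that $\varphi$ is order bounded. For the reverse implication, given a nonzero order bounded functional $\varphi$, the same proposition says that $\ker\varphi$ is uniformly closed, and it has codimension one because $\varphi$ is nonzero. If the statement is understood to allow $\varphi = 0$ (whose kernel is the whole space $X$, of codimension zero), one can simply observe that the corollary is really about nonzero order bounded functionals.

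There is no real obstacle here; the content of the corollary is already contained in Proposition~\ref{obdd-ker-u-closed}, and the passage from the kernel formulation to the codimension-one subspace formulation is purely a matter of translating between a functional and its kernel. The proof should therefore be only a sentence or two.
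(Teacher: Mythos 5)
Your argument is exactly the paper's: the corollary is derived from Proposition~\ref{obdd-ker-u-closed} together with the standard identification of codimension-one subspaces with kernels of nonzero linear functionals, and your extra remark about excluding the zero functional is a harmless clarification. This matches the paper's one-line derivation, so the proposal is correct.
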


\begin{example}
  Let $X=c_{00}$, the space of all eventually zero sequences. Every
  subspace $Y$ of $c_{00}$ is uniformly closed because for every
  $e\in X_+$, the principal ideal $I_e$ is finite-dimensional, hence
  $Y\cap I_e$ is closed in $\bigl(I_e,\norm{\cdot}_e\bigr)$. It
  follows from Proposition~\ref{obdd-ker-u-closed} that every linear
  functional on $c_{00}$ is order bounded.
\end{example}

\begin{lemma}
  Let $Y$ and $Z$ be subspaces of $X$ such that $Y$ is uniformly
  closed and $Z$ is finite dimensional. Then $Y+Z$ is uniformly
  closed.
\end{lemma}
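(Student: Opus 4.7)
The plan is to proceed by induction on $n=\dim Z$. The case $n=0$ is trivial. For the inductive step, pick $z\in Z\setminus\{0\}$ and decompose $Z=W\oplus\mathbb{R}z$ with $\dim W=n-1$; by the inductive hypothesis $Y+W$ is uniformly closed, so replacing $Y$ by $Y+W$ reduces us to the case $\dim Z=1$, $Z=\mathbb{R}z$. If $z\in Y$ then $Y+Z=Y$ and we are done, so assume $z\notin Y$.

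Let $(x_n)$ be a sequence with $x_n=y_n+\lambda_n z$, $y_n\in Y$, $\lambda_n\in\mathbb{R}$, and $x_n\goesu x$ for some $x\in X$; we must show $x\in Y+\mathbb{R}z$. The crucial step is to verify that the scalars $(\lambda_n)$ must be bounded. Suppose for contradiction that, after passing to a subsequence, $|\lambda_n|\to\infty$. Choose $e\in X_+$ with $\norm{x_n-x}_e\to 0$; from $|x_n|\le|x_n-x|+|x|$ one sees that $(x_n)$ is eventually $\norm{\cdot}_f$-bounded for $f:=e+|x|$, whence $\tfrac{1}{\lambda_n}x_n\goesu 0$. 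Therefore
\[
\tfrac{1}{\lambda_n}y_n \;=\; \tfrac{1}{\lambda_n}x_n - z \;\goesu\; -z.
\]
Since each $\tfrac{1}{\lambda_n}y_n$ lies in $Y$ and $Y$ is uniformly closed, this forces $z\in Y$, contradicting our assumption.

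Hence $(\lambda_n)$ is bounded, and after extracting a further subsequence we may assume $\lambda_n\to\lambda$ for some $\lambda\in\mathbb{R}$. Working in $\norm{\cdot}_{|z|}$ gives $\lambda_n z\goesu\lambda z$, so $y_n=x_n-\lambda_n z\goesu x-\lambda z$; uniform closedness of $Y$ then yields $x-\lambda z\in Y$, i.e., $x\in Y+\mathbb{R}z$, completing the induction.

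The main obstacle is the dichotomy on $(\lambda_n)$ in the codimension-one step: excluding the unbounded case is where uniform closedness of $Y$ is genuinely used, and it relies on the small but essential observation that a uniformly convergent sequence is automatically $\norm{\cdot}_f$-bounded for some $f\in X_+$, so that dividing by $|\lambda_n|\to\infty$ produces a uniformly null sequence in $Y$.
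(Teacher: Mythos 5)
Your proof is correct, but it takes a different route from the paper. The paper localizes: given a regulator $e$ for the convergent net, it picks $u\ge e$ with $Z\subseteq I_u$, observes that $(Y+Z)\cap I_u=(Y\cap I_u)+Z$, and then simply invokes the classical fact that a closed subspace plus a finite-dimensional subspace of a normed space is closed in $\bigl(I_u,\norm{\cdot}_u\bigr)$. You instead prove that normed-space fact from scratch, transplanted into the relative-uniform setting: induction on $\dim Z$ reduces to a one-dimensional $Z=\mathbb R z$, and the bounded/unbounded dichotomy on the coefficients $\lambda_n$ does the work, with the unbounded case excluded exactly because $\tfrac{1}{\lambda_n}y_n\goesu -z$ would force $z\in Y$. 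All the individual steps check out: the eventual $\norm{\cdot}_f$-boundedness of a uniformly convergent sequence for $f=e+\abs{x}$ is correct, $\lambda_nz\goesu\lambda z$ via the regulator $\abs{z}$ is correct, and combining regulators (e.g.\ $e+\abs{z}$) justifies passing limits through sums. Two small remarks: uniform closedness is defined via nets, so you should note that it suffices to test sequences (from a uniformly convergent net one extracts a sequence of its terms with $\norm{x_{\alpha_n}-x}_e<1/n$, converging with the same regulator); and uniqueness of uniform limits, used implicitly when you conclude $z\in Y$ and $x-\lambda z\in Y$, holds because $X$ is Archimedean. What each approach buys: the paper's argument is shorter and makes transparent that the lemma is just the normed-space statement read inside a principal ideal, while yours is self-contained, avoids appealing to an external fact, and makes explicit where uniform closedness of $Y$ is genuinely used.
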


\begin{proof}
  Suppose that $x_\alpha\goesu x$ for some $(x_\alpha)$ in $Y+Z$. Find
  $e\in X_+$ with $\norm{x_\alpha-x}_e\to 0$. Since $Z$ is
  finite-dimensional, we can find $u\ge e$ such that $Z\subseteq
  I_u$. Then $(Y+Z)\cap I_u=(Y\cap I_u)+Z$. Since $Y\cap I_u$ is
  closed in $I_u$, so is $(Y+Z)\cap I_u$. It follows from
  $\norm{x_\alpha-x}_u\to 0$ that $x\in Y+Z$.
\end{proof}

\begin{corollary}\label{nested}
  Let $Y$ be a uniformly closed subspace of $X$ of finite
  codimension. If $Y\subseteq Z\subseteq X$ for some subspace $Z$ then
  $Z$ is uniformly closed.
\end{corollary}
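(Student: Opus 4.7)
The plan is to reduce the statement directly to the preceding lemma, using only the standard linear algebra of quotients.

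First I would observe that since $Y$ has finite codimension in $X$, the quotient $X/Y$ is finite-dimensional, and hence the subspace $Z/Y$ of $X/Y$ is also finite-dimensional, say of dimension $k$. Next I would lift a basis of $Z/Y$: pick vectors $z_1,\dots,z_k\in Z$ whose cosets form a basis of $Z/Y$, and set $Z_0=\Span\{z_1,\dots,z_k\}$. Then $Z_0$ is finite-dimensional and $Z=Y+Z_0$, since every $z\in Z$ can be written as $z=y+\sum c_i z_i$ with $y\in Y$ by the choice of the $z_i$'s.

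At that point the preceding lemma (applied to $Y$ and the finite-dimensional $Z_0$) immediately yields that $Z=Y+Z_0$ is uniformly closed, which is exactly the claim. The argument is essentially a one-line reduction once the finite-dimensionality of $Z/Y$ has been identified as the key fact; there is no real obstacle, and no separate treatment of special cases is needed. If $Z=Y$ the statement is trivial, and otherwise $k\ge 1$ and the argument above applies verbatim.
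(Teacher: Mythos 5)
Your proof is correct and is exactly the argument the paper intends: the corollary is stated without proof precisely because it follows from the preceding lemma by writing $Z=Y+Z_0$ with $Z_0$ a finite-dimensional lift of a basis of $Z/Y$. Nothing is missing.
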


\begin{corollary}\label{ucmpl-cl-n}
  Every uniformly closed subspace of $X$ of finite codimension is an
  intersection of uniformly closed subspaces of codimention~1.
\end{corollary}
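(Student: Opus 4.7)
The plan is to combine the standard linear-algebraic description of finite codimensional subspaces (recalled in the preliminaries) with Corollary~\ref{nested}, which was just proved and states that any subspace sandwiched between a uniformly closed subspace of finite codimension and the ambient space is itself uniformly closed. The essential observation is that Corollary~\ref{nested} lets us promote \emph{any} codimension-1 subspace containing $Y$ to a uniformly closed one for free, so we only need to produce an algebraic decomposition of $Y$ as an intersection of $n$ such subspaces.

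More concretely, suppose $Y$ has codimension $n$ in $X$. By the linear algebra review in the introduction, there exist linearly independent functionals $\varphi_1,\dots,\varphi_n\in X'$ such that
\[
  Y=\bigcap_{i=1}^n\ker\varphi_i.
\]
For each $i$, the subspace $\ker\varphi_i$ has codimension $1$ and contains $Y$. Since $Y$ is uniformly closed of finite codimension, Corollary~\ref{nested} applies with $Z=\ker\varphi_i$, so each $\ker\varphi_i$ is uniformly closed. This exhibits $Y$ as the intersection of $n$ uniformly closed subspaces of codimension $1$, which is what we wanted.

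There is essentially no obstacle here: the whole corollary is an immediate consequence of Corollary~\ref{nested} once one recalls the purely algebraic fact that a codimension-$n$ subspace is the intersection of $n$ codimension-1 subspaces. If desired, one may further note that each $\varphi_i$ is then automatically order bounded by Proposition~\ref{obdd-ker-u-closed} (equivalently, by Corollary~\ref{u-subsp-codim-1}), so the codimension-1 pieces can be described as kernels of order bounded functionals; but this refinement is not needed for the statement.
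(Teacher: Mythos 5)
Your proof is correct and matches the paper's intended argument: the corollary is stated without proof as an immediate consequence of Corollary~\ref{nested}, relying on exactly the algebraic decomposition $Y=\bigcap_{i=1}^n\ker\varphi_i$ recalled in the preliminaries, with Corollary~\ref{nested} upgrading each $\ker\varphi_i$ to a uniformly closed subspace. Your closing remark about order boundedness of the $\varphi_i$ is likewise exactly how the paper uses this result in the paragraph that follows it.
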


\begin{question}
  Under what conditions on $X$ can we say that \emph{every} uniformly
  closed subspace is the intersection of uniformly closed subspaces of
  codimension~1? This is not true in general as $X^\sim$ may be
  trivial, and then $X$ has no uniformly closed subspaces of
  codimension~1. However, this is true when $X$ is a Banach lattice
  because in this case uniformly closed and norm closed sets agree.
\end{question}

Let $Y$ be a subspace of $X$ of co-dimension $n$. By
Corollaries~\ref{u-subsp-codim-1} and~\ref{ucmpl-cl-n}, $Y$ is
uniformly closed iff $Y=\bigcap_{i=1}^n\ker\varphi_i$ for some order
bounded functionals $\varphi_1,\dots,\varphi_n$. This is equivalent to
$Y$ being the kernel of an order bounded linear operator
$T\colon X\to\mathbb R^n$, --- just take $(Tx)_i=\varphi_i(x)$. This
motivates the following question:

\begin{question}\label{q:ussubs}
  Is every uniformly closed subspace the kernel of an
  order bounded operator?
\end{question}

In Proposition~\ref{obdd-ker-u-closed}, we describe the kernels of
order bounded functionals. The following result characterizes kernels
of positive functionals. Recall that a subspace $Y$ of $X$ is
\term{full} if $x,y\in Y$ and $x\le y$ imply $[x,y]\subseteq
Y$. Clearly, every ideal is a full subspace. The converse is false:
the straight line $y=-x$ in $\mathbb R^2$ is full but not an ideal.

\begin{lemma}
  A subspace $Y$ is full if and only if the set
  \begin{math}
    \bigl\{x\in X\mid\abs{x}\in Y\bigr\}
  \end{math}
  is an ideal.
\end{lemma}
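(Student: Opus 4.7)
Write $A=\bigl\{x\in X\mid\abs{x}\in Y\bigr\}$. My plan is to prove both implications by direct verification, using fullness of $Y$ together with the fact that $Y$ is assumed to be a (linear) subspace.

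For the forward direction, assume $Y$ is full. First I will check that $A$ is a subspace. Homogeneity is immediate from $\abs{\lambda x}=\abs{\lambda}\abs{x}\in Y$. For closure under addition I will use the triangle inequality $0\le\abs{x+y}\le\abs{x}+\abs{y}$: if $x,y\in A$ then $\abs{x}+\abs{y}\in Y$ because $Y$ is a subspace, and also $0\in Y$, so fullness of $Y$ forces $\abs{x+y}\in[0,\abs{x}+\abs{y}]\subseteq Y$. Next I will verify the ideal property: if $w\in A$ and $\abs{z}\le\abs{w}$, then $0$ and $\abs{w}$ lie in $Y$ and $0\le\abs{z}\le\abs{w}$, so fullness gives $\abs{z}\in Y$, i.e.\ $z\in A$.

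For the reverse direction, assume $A$ is an ideal. Take $x,y\in Y$ with $x\le y$ and $z\in[x,y]$; I need $z\in Y$. Since $Y$ is a subspace, $y-x\in Y$, and $y-x\ge 0$ gives $\abs{y-x}=y-x\in Y$, so $y-x\in A$. From $0\le z-x\le y-x$ we get $\abs{z-x}=z-x\le y-x=\abs{y-x}$, so the ideal property of $A$ gives $z-x\in A$, i.e.\ $\abs{z-x}=z-x\in Y$. Therefore $z=(z-x)+x\in Y$, which is exactly fullness.

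I do not anticipate any serious obstacle; the only substantive step is noting that the subspace structure of $Y$ (giving us $0\in Y$ and $\abs{x}+\abs{y}\in Y$) is what lets fullness sandwich $\abs{x+y}$ into $Y$, so that $A$ actually becomes a subspace before the ideal property kicks in.
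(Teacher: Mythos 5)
Your proof is correct and follows essentially the same route as the paper's: verify that $A$ is a subspace via the triangle inequality and fullness applied to the interval $[0,\abs{x}+\abs{y}]$, get solidity from $[0,\abs{w}]$, and for the converse translate an interval $[x,y]$ to $[0,y-x]$ and use the ideal property. Your write-up is in fact slightly more explicit than the paper's about where $0\in Y$ and fullness are invoked, but the argument is the same.
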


\begin{proof}
  Suppose that $Y$ is full; we will show that the set
  $J:=\bigl\{x\in X\mid\abs{x}\in Y\bigr\}$ is an ideal. It is clear
  that $J$ is closed under scalar multiplication. To show that it is
  closed under addition, let $x,y\in J$, then $\abs{x}+\abs{y}\in Y$;
  it now follows from $0\le\abs{x+y}\le\abs{x}+\abs{y}$ that
  $\abs{x+y}\in Y$ and, therefore, $x+y\in J$. Finally, suppose that
  $y\in J$ and $\abs{x}\le\abs{y}$; it follows from
  $0\le\abs{x}\le\abs{y}$ that $\abs{x}\in Y$ and, therefore, $x\in
  J$.

  Conversely, suppose that $J$ is an ideal. Suppose $x\in[y_1,y_2]$
  where $y_1,y_2\in Y$. It follows from $0\le x-y_1\le y_2-y_1$ and
  $y_2-y_1\in J$ that $x-y_1\in J$, hence $x-y_1\in Y$ and, therefore,
  $x\in Y$.
\end{proof}

\begin{proposition}\label{ker-full}
  A one-codimensional subspace of $X$ is full if and only if it is the kernel of
  a positive functional.
\end{proposition}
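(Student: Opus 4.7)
The plan is to handle the two directions separately. For the ``if'' direction, suppose $\varphi \ge 0$ with $\ker \varphi = Y$, and let $y_1, y_2 \in Y$ with $y_1 \le y_2$ and $z \in [y_1, y_2]$. Then $0 \le z - y_1 \le y_2 - y_1$, so applying $\varphi$ sandwiches $\varphi(z - y_1)$ between $0$ and $\varphi(y_2 - y_1) = 0$; thus $z - y_1 \in \ker \varphi$, and $z \in Y$. This direction is essentially automatic from positivity.

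For the ``only if'' direction, since $Y$ has codimension one, fix a nonzero functional $\varphi$ with $Y = \ker \varphi$; the goal is to show that $\varphi$ or $-\varphi$ is positive. I will argue by contradiction. If neither $\varphi$ nor $-\varphi$ is positive, then there exist $u, v \in X_+$ with $\varphi(u) > 0$ and $\varphi(v) < 0$. The key construction is the positive combination
\[
  w := -\varphi(v)\,u + \varphi(u)\,v,
\]
which lies in $X_+$ because its two summands do, and which satisfies $\varphi(w) = 0$, so $w \in Y$. Since also $0 \in Y$ and $0 \le -\varphi(v)\,u \le w$, fullness of $Y$ forces $-\varphi(v)\,u \in Y$, contradicting $\varphi\bigl(-\varphi(v)\,u\bigr) = -\varphi(v)\varphi(u) > 0$. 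Thus $\varphi$ or $-\varphi$ is a positive functional with kernel $Y$.

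The only nontrivial step is choosing the witness $w$: fullness is a sandwich condition, so I need an element of $Y \cap X_+$ dominating a nonzero positive element on which $\varphi$ does not vanish. Taking $w$ as a positive combination of $u$ and $v$ with coefficients arranged so that $\varphi$ cancels provides exactly such a witness, and the contradiction falls out immediately.
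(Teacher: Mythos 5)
Your proof is correct and follows essentially the same route as the paper: positivity gives the ``if'' direction by sandwiching, and the ``only if'' direction takes a positive combination of $u$ and $v$ killed by $\varphi$ and uses fullness on the interval $[0,w]$ to derive a contradiction (the paper merely normalizes the coefficients to sum to $1$ instead of using $-\varphi(v)$ and $\varphi(u)$). No gaps.
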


\begin{proof}
  Let $\varphi$ be a positive functional on $X$,
  $y_1,y_2\in\ker\varphi$, and $y_1\le x\le y_2$. Then
  $0=\varphi(y_1)\le\varphi(x)\le\varphi(y_2)=0$ implies
  $x\in\ker\varphi$.

  For the converse, suppose that $\ker\varphi$ is full but $\varphi$
  is neither positive nor negative. Then there exist $x,y\in X_+$ such
  that $\varphi(x)>0$ and $\varphi(y)<0$. Take $\alpha,\beta>0$
  such that $\alpha+\beta=1$ and
  $0=\alpha\varphi(x)+\beta\varphi(y)=\varphi\bigl(\alpha x+\beta
  y)$. It now follows from $0\le\alpha x\le\alpha x+\beta
  y\in\ker\varphi$ that $\alpha x\in\ker\varphi$, which is a contradiction.
\end{proof}

The following two results are known; we include them for completeness.

\begin{proposition}\label{ker-ideal}
  A subspace $J$ of co-dimension 1 is an ideal if and only if $J$ is the kernel
  of a real-valued lattice homomorphism.
\end{proposition}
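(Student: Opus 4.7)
The plan is to handle the two implications separately; both rest on identifying $X/J$ with $\mathbb R$.

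For the ($\Leftarrow$) direction, suppose $J=\ker\varphi$ for a real-valued lattice homomorphism $\varphi$, which is necessarily nonzero since $\codim J=1$. To see that $J$ is an ideal, let $y\in J$ and $\abs{x}\le\abs{y}$; then the lattice homomorphism property gives
\[
  \abs{\varphi(x)}=\varphi(\abs{x})\le\varphi(\abs{y})=\abs{\varphi(y)}=0,
\]
so $x\in J$.

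For the ($\Rightarrow$) direction, the strategy is to produce $\varphi$ as the quotient map composed with a lattice isomorphism $X/J\to\mathbb R$. Since $J$ is an ideal, $X/J$ is a Riesz space and the quotient map $q\colon X\to X/J$ is a lattice homomorphism with kernel $J$. As a vector space $X/J$ is one-dimensional, so I will identify it with $\mathbb R$ and analyze the positive cone $C:=q(X_+)$ inside this identification. The key verification is that $C$ is a proper convex cone with $C\cap(-C)=\{0\}$: if $[u]=-[v]$ with $u,v\in X_+$, then $u+v\in J$, and since $J$ is an ideal with $0\le u\le u+v$, we get $u\in J$, so $[u]=0$. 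Also $C\ne\{0\}$, since otherwise $X_+\subseteq J$ would force $J=X$. Hence $C$ is one of the two proper cones in $\mathbb R$, and after possibly reversing the sign of the identification so that $C$ maps onto $[0,\infty)$, the composition with $q$ is a lattice homomorphism $\varphi\colon X\to\mathbb R$ with kernel $J$.

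The only real step requiring care is the verification that $q(X_+)$ is a proper cone in $X/J$; this is precisely where the ideal hypothesis is used. No Archimedean or uniform-closedness assumption on $X/J$ is needed, because the one-dimensional quotient is automatically lattice-isomorphic to $\mathbb R$ once its positive cone is proper and nonzero.
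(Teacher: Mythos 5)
Your proof is correct and follows the same route as the paper: the forward direction is realized via the quotient map $X\to X/J\cong\mathbb R$, and the reverse direction is the easy positivity computation (which the paper simply calls trivial). You merely spell out the details the paper omits, namely the verification that the one-dimensional quotient carries a proper generating cone and is therefore lattice isomorphic to $\mathbb R$.
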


\begin{proof}
  If $J$ is an ideal then $J$ is the kernel of the quotient map $X\to
  X/J$. The converse is trivial.
\end{proof}

For a positive functional $\varphi$, we write $N_\varphi$ for its
\term{null ideal}:
$\bigl\{x\in X\mid\varphi\bigl(\abs{x}\bigr)=0\bigr\}$.

\begin{proposition}
  An ideal $J$ in $X$ is a null ideal of some positive functional if and only if
  $X/J$ admits a strictly positive functional.
\end{proposition}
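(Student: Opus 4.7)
My plan is to prove the two implications separately, using the quotient map $q\colon X\to X/J$ and the fact from Section~4 of the paper that $q$ is a lattice homomorphism whenever $J$ is an ideal. The key elementary observation I will use throughout is that a positive functional $\varphi$ vanishes on its null ideal: if $x\in N_\varphi$ then $-\abs{x}\le x\le\abs{x}$ together with $\varphi\ge 0$ and $\varphi(\abs{x})=0$ forces $\varphi(x)=0$.

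For the ``only if'' direction, I start with $\varphi\ge 0$ on $X$ and assume $J=N_\varphi$. Since $\varphi$ vanishes on $J$ by the observation above, the formula $\tilde\varphi\bigl(q(x)\bigr):=\varphi(x)$ defines a linear functional on $X/J$. To see that $\tilde\varphi$ is positive, take $\xi\in(X/J)_+$; by the description of the quotient cone, $\xi=q(y)$ for some $y\in X_+$, and then $\tilde\varphi(\xi)=\varphi(y)\ge 0$. For strict positivity, suppose $\xi\in(X/J)_+$ with $\tilde\varphi(\xi)=0$; picking $y\in X_+$ with $q(y)=\xi$, we get $\varphi(\abs{y})=\varphi(y)=0$, so $y\in N_\varphi=J$, whence $\xi=q(y)=0$.

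For the ``if'' direction, let $\psi$ be a strictly positive functional on $X/J$ and set $\varphi:=\psi\circ q$. Then $\varphi\ge 0$ since $q$ is a positive map. To verify $N_\varphi=J$, I use that $q$ is a lattice homomorphism, so $q(\abs{x})=\abs{q(x)}$ for every $x\in X$. If $x\in J$, then $q(x)=0$, so $\varphi(\abs{x})=\psi\bigl(\abs{q(x)}\bigr)=0$ and $x\in N_\varphi$. Conversely, if $x\in N_\varphi$, then $\psi\bigl(\abs{q(x)}\bigr)=0$; strict positivity of $\psi$ forces $\abs{q(x)}=0$, hence $q(x)=0$ and $x\in J$.

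I do not anticipate a genuine obstacle: the argument is a clean bookkeeping exercise once one notices that $\varphi$ is forced to vanish on $N_\varphi$ (not merely in absolute value), which is what makes the quotient factorization legitimate and lets the definitions $\tilde\varphi=\varphi\circ q^{-1}$ and $\varphi=\psi\circ q$ be inverse bijections between the relevant sets of functionals. No uniform-closedness hypothesis on $J$ needs to be invoked explicitly, since $X/J$ is always a vector lattice whenever $J$ is an ideal and the notion of strict positivity makes sense there irrespective of Archimedeanness.
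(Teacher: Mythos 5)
Your argument is correct and complete; note that the paper actually states this proposition without giving any proof, so there is nothing to compare against, but your factorization through the quotient map is exactly the natural argument the authors presumably had in mind. Both directions check out: the observation that a positive $\varphi$ vanishes on $N_\varphi$ (via $-\abs{x}\le x\le\abs{x}$) legitimizes the descent to $X/J$, the positive representative of each element of the quotient cone gives positivity and strict positivity of $\tilde\varphi$, and in the converse direction the identity $q(\abs{x})=\abs{q(x)}$ together with strict positivity of $\psi$ pins down $N_{\psi\circ q}=J$; your closing remark that no uniform closedness of $J$ is needed is also consistent with the paper's setup, which only invokes that hypothesis to get Archimedeanness of $X/J$.
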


% \begin{proof}
%   Suppose that $J=N_\varphi$ for some $\varphi>0$. It follows from
%   $N_\varphi\subseteq\ker\varphi$ that $\varphi$ induces a functional
%   on $X/N_{\varphi}$ via $\tilde\varphi(x+N_\varphi)=\varphi(x)$. If
%   $0<x\not\in N_{\varphi}$ then $\tilde\varphi(x+N_\varphi)>0$; it
%   follows that $\tilde\varphi$ is strictly positive.

%   To prove the converse, let $\psi$ be a strictly positive functional
%   on $X/J$. Put $\varphi=\psi\circ Q$, where $Q\colon X\to X/J$ is the
%   canonical quotient map. In particular, $Q$ is a lattice
%   homomorphism. Clearly, $\varphi\ge 0$. For $x\in X$, we have
%   \begin{displaymath}
%     \varphi\bigl(\abs{x}\bigr)=0
%     \quad\Leftrightarrow\quad
%     \psi\bigl(\abs{Qx}\bigr)=0
%     \quad\Leftrightarrow\quad
%     Qx=0
%     \quad\Leftrightarrow\quad
%     x\in J,
%   \end{displaymath}
%   hence $J=N_\varphi$.
% \end{proof}

\section{Uniformly closed sublattices of finite codimension}

The following result is a part of Theorem~3 in~\cite{Abramovich:90a}. As it is
important for our exposition, we include a short proof of it.

\begin{proposition}[\cite{Abramovich:90a}]\label{sublats-exist}
  For every $n\in\mathbb N$ with $n\le\dim X$, $X$ admits a sublattice
  of codimension $n$.
\end{proposition}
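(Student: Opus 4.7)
I would argue by induction on $n$. The case $n=0$ is trivial since $X$ itself has codimension $0$. For the inductive step, assume the result at codimension $n-1$, and let $X$ satisfy $\dim X\ge n$. The inductive hypothesis applied to $X$ (valid since $n-1\le\dim X$) produces a sublattice $Y$ of codimension $n-1$; the relation $\dim(X/Y)=n-1$ combined with $\dim X\ge n$ forces $\dim Y\ge 1$. As a sublattice of an Archimedean vector lattice, $Y$ is itself Archimedean, so producing a codimension-$1$ sublattice of $Y$ yields a codimension-$n$ sublattice of $X$ (codimensions add along nested inclusions via the exact sequence $0\to Y/Y'\to X/Y'\to X/Y\to 0$). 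The whole proposition thus reduces to the base case $n=1$: every nontrivial Archimedean vector lattice admits a codimension-$1$ sublattice.

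For the base case, if $\dim X=1$ take $\{0\}$. Otherwise pick linearly independent $x_1,x_2\in X$ and set $e:=\abs{x_1}+\abs{x_2}$, so that $x_1,x_2\in I_e$ and $\dim I_e\ge 2$. The Kakutani-Krein representation gives a lattice-isometric embedding of $(I_e,\norm{\cdot}_e)$ as a norm-dense sublattice of $C(K)$ for some compact Hausdorff~$K$, sending $e\mapsto\one_K$. Since $\dim I_e\ge 2$, density forces $|K|\ge 2$; pick any $t\in K$ and form the restricted point evaluation $\psi:=\delta_t|_{I_e}$, a nonzero real-valued lattice homomorphism on $I_e$ (nonzero by density) whose kernel is a codimension-$1$ sublattice of $I_e$.

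The main obstacle is promoting $\ker\psi$ from a codimension-$1$ sublattice of $I_e$ to a codimension-$1$ sublattice of the full space $X$. When $e$ happens to be a strong unit we have $X=I_e$ and there is nothing more to do; in general, however, $I_e$ need not be majorizing, and $\psi$ need not extend to a lattice homomorphism on $X$. A natural candidate is
\[
  Y := \bigl\{x\in X : \psi(\abs{x}\wedge ne)=0 \text{ for every } n\in\mathbb{N}\bigr\},
\]
which the subadditivity $\abs{x+y}\wedge ne\le \abs{x}\wedge ne+\abs{y}\wedge ne$ renders closed under addition, scalar multiplication, and the lattice operations. Verifying that $Y$ has codimension exactly $1$ in $X$ is the delicate point: morally, $x\mapsto\lim_n\psi(x^+\wedge ne)-\lim_n\psi(x^-\wedge ne)$ should give a linear functional on $X$ with kernel $Y$, but finiteness of the limits is not automatic. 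I would expect to resolve this either by a Zorn's-lemma argument, selecting a sublattice of $X$ maximal among those avoiding $e$, or by arranging $e$ so that $I_e$ majorizes enough of $X$ to permit extension of positive functionals.
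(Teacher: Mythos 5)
The reduction to the case $n=1$ by induction is fine and is exactly what the paper does (it dismisses this step with ``clearly''). The base case, however, contains a genuine gap, and it is not merely a technicality: the route you chose cannot be completed. Your candidate
\[
Y=\bigl\{x\in X\mid\psi(\abs{x}\wedge ne)=0\text{ for all }n\in\mathbb N\bigr\}
\]
is solid (if $\abs{y}\le\abs{x}$ and $x\in Y$ then $\abs{y}\wedge ne\le\abs{x}\wedge ne$ and $\psi\ge 0$), so it is an \emph{ideal} of $X$; if it had codimension $1$ it would be the kernel of a real-valued lattice homomorphism on $X$ by Proposition~\ref{ker-ideal}. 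But such homomorphisms need not exist: the paper itself notes that $L_p[0,1]$ admits no real-valued lattice homomorphisms and no proper ideals of finite codimension (Corollary~\ref{no-ideals-nonat}), yet Proposition~\ref{sublats-exist} applies to it. So in $L_p[0,1]$ your $Y$ is a proper ideal of infinite codimension, and no choice of $e$ or $t$ repairs this. The same objection defeats your fallback of arranging $e$ so that $I_e$ majorizes $X$ and extending $\delta_t$: any successful extension as a lattice homomorphism would again yield a codimension-$1$ ideal. The sublattice whose existence is asserted is in general \emph{not} an ideal and \emph{not} uniformly closed, hence not the kernel of any order bounded functional (Propositions~\ref{obdd-ker-u-closed} and~\ref{sublat-codim-1}), so no construction that produces the kernel of a lattice homomorphism, or of any order bounded map, can succeed.

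The idea you are missing is the one the paper uses: take a proper \emph{prime} ideal $J$ of $X$, i.e., an ideal such that $x\wedge y\in J$ implies $x\in J$ or $y\in J$; these always exist by Theorem~33.4 of \cite{Luxemburg:71}. A prime ideal may have infinite codimension, but by Zorn's lemma one can choose a mere linear subspace $Y$ of codimension $1$ with $J\subseteq Y\subsetneq X$, and primality then does all the work: for $x\in Y$ one has $x^+\wedge x^-=0\in J$, so $x^+\in J$ or $x^-\in J$, and since $x=x^+-x^-\in Y$ both parts lie in $Y$; hence $Y$ is closed under the lattice operations. Note that this argument uses neither the Archimedean property nor any representation theorem, whereas your appeal to Krein--Kakutani does; the paper remarks that the proposition holds without the Archimedean assumption.
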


\begin{proof}
  Clearly, it suffices to prove the statement for $n=1$. By
  Theorem~33.4 in~\cite{Luxemburg:71} $X$ admits a proper ideal $J$
  which is prime, i.e., $x\wedge y\in J$ implies $x\in J$ or $y\in
  J$. By Zorn's Lemma, there exists a subspace $Y$ of $X$ of
  codimension 1 containing $J$. If $x\in Y$ then $x^+$ or $x^-$ is in
  $J$, hence both $x^+$ and $x^-$ are in $Y$. Therefore, $Y$ is a
  sublattice.
\end{proof}

% Furthermore, it follows from Corollary~3 in~\cite{Abramovich:90a} that
% every Banach lattice admits a (discontinuous) functional whose kernel
% is a sublattice that is not uniformly closed. The following
% proposition does not require a sublattice to be uniformly closed.

\begin{lemma}
  Let $Y$ be a sublattice of $\mathbb R^n$ of codimension~$m$. Then
  $Y$ contains at least $n-2m$ and at most $n-m$ of the standard unit
  vectors.
\end{lemma}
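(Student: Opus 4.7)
The plan is to invoke Corollary \ref{sublat-Rn} to write $Y$ as the span of pairwise disjoint positive vectors $x^{(1)},\dots,x^{(k)}$ in $\mathbb R^n$. Since these vectors have pairwise disjoint supports they are linearly independent, so $k=\dim Y=n-m$. Let $A_j=\supp x^{(j)}\subseteq\{1,\dots,n\}$; by construction the $A_j$ are pairwise disjoint and nonempty.

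The key observation is that a standard unit vector $\one_{\{i\}}$ lies in $Y$ if and only if $A_j=\{i\}$ for some $j$. Indeed, any element of $Y$ has the form $\sum_j c_jx^{(j)}$, whose support is the union of the $A_j$ with $c_j\ne 0$; for this support to be the singleton $\{i\}$, exactly one $c_j$ is nonzero and the corresponding $A_j$ equals $\{i\}$, in which case $x^{(j)}$ is a positive scalar multiple of $\one_{\{i\}}$. Let $s$ denote the number of standard unit vectors contained in $Y$; then $s$ equals the number of singletons among $A_1,\dots,A_k$.

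The upper bound is immediate: the $s$ standard unit vectors lying in $Y$ are linearly independent, so $s\le\dim Y=n-m$. For the lower bound, suppose $s$ of the supports are singletons and the remaining $k-s$ contain at least two elements each. Using disjointness,
\begin{displaymath}
  n\ge\sum_{j=1}^k\abs{A_j}\ge s+2(k-s)=2k-s,
\end{displaymath}
which rearranges to $s\ge 2k-n=2(n-m)-n=n-2m$.

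There is essentially no obstacle here beyond correctly identifying when a standard unit vector belongs to the span of a disjoint positive family; the bounds then follow from the trivial counting inequalities above.
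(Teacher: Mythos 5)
Your proof is correct and follows the same route as the paper, which derives both bounds from Corollary~\ref{sublat-Rn}; you have simply written out the support-counting details that the paper leaves as "obvious."
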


\begin{proof}
  The lower estimate follows from Corollary~\ref{sublat-Rn}; the upper
  estimate is obvious.
\end{proof}

The following result was proved in \cite{Abramovich:90b} in the
special case of sublattices of codimension~1.

\begin{proposition}\label{sublat-disj-2m}
  Let $Y$ be a sublattice of $X$ of codimension $m$. Then any disjoint
  set that does not meet $Y$ consists of at most $2m$ vectors.
\end{proposition}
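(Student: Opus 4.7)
The plan is to reduce the statement to the finite-dimensional situation handled by the preceding lemma. Given disjoint vectors $x_1,\dots,x_k$ in $X$ with $x_i\notin Y$ for each $i$, I want to manufacture a finite-dimensional sublattice $W$ of $X$ inside which $Y$ cuts out a sublattice of codimension at most $m$, and then invoke the lemma.

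First I would reduce to the case of positive disjoint vectors. Since $Y$ is a subspace, if both $x_i^+$ and $x_i^-$ lay in $Y$ then so would their difference $x_i=x_i^+-x_i^-$, contradicting $x_i\notin Y$. Hence for each $i$ at least one of $x_i^\pm$ is outside $Y$; pick such a vector and call it $y_i$. Disjointness of $x_i$ and $x_j$ forces $\abs{x_i}\wedge\abs{x_j}=0$ for $i\ne j$, which in turn makes all four of $x_i^+,x_i^-,x_j^+,x_j^-$ pairwise disjoint; in particular $y_i\wedge y_j=0$. Thus $y_1,\dots,y_k$ are nonzero, positive, pairwise disjoint, and none of them lies in~$Y$.

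Next, let $W=\Span\{y_1,\dots,y_k\}$. Nonzero pairwise disjoint positives are linearly independent, so $\dim W=k$, and for any $y=\sum c_iy_i$ we have $y^+=\sum c_i^+y_i\in W$ and $y^-=\sum c_i^-y_i\in W$; hence $W$ is a sublattice of $X$, and indeed a vector lattice isomorphism $W\cong\mathbb R^k$ sends $y_i$ to (a positive scalar multiple of) the standard unit vector $e_i$. The intersection $W\cap Y$ is a sublattice of $W$, and its codimension in $W$ is at most $m$ by the linear algebra recalled in the introduction (apply the quotient map $X\to X/Y$ and restrict to $W$).

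Finally, applying the preceding lemma to the sublattice $W\cap Y$ of $W\cong\mathbb R^k$ of codimension $m'\le m$: at least $k-2m'$ of the standard unit vectors, i.e., of the $y_i$, lie in $W\cap Y$ and therefore in $Y$. Since none of the $y_i$ is in $Y$, this forces $k-2m'\le 0$, so $k\le 2m'\le 2m$, as required. No step seems genuinely delicate; the only thing to check carefully is that replacing each $x_i$ by the ``bad'' piece $x_i^\pm$ genuinely preserves both disjointness and the property of lying outside $Y$, which is what the first paragraph addresses.
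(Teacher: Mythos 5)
Your proposal is correct and follows essentially the same route as the paper's own proof: reduce to pairwise disjoint positive vectors by selecting for each $x_i$ a part $x_i^{\pm}$ outside $Y$, identify their span with $\mathbb R^k$, observe that $Y$ meets it in a sublattice of codimension at most $m$, and invoke the preceding lemma. The only difference is the order of the two steps (the paper treats the positive case first and then reduces the general case to it), which is immaterial.
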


\begin{proof}
  Suppose that $x_1,\dots,x_n$ are disjoint vectors in $X\setminus Y$
  for some $n>2m$. Assume first that $x_1,\dots,x_n>0$. Let
  $Z=\Span\{x_1,\dots,x_n\}$. Clearly, there exists a lattice
  isomorphism $T$ from $Z$ onto $\mathbb R^n$ such that $Tx_i=e_i$ as
  $i=1,\dots,n$. Note that $Y\cap Z$ is a sublattice of $Z$ of
  codimension at most $m$. It follows from the preceding lemma that
  $Y\cap Z$ contains some (at least $n-2m$) of the $x_i$'s, which
  contradicts the assumptions. This proves the theorem for positive
  vectors.

  Suppose now that $x_1,\dots,x_n$ are arbitrary disjoint vectors in
  $X\setminus Y$ for some $n>2m$. For each $i$, either $x_i^+$ or
  $x_i^-$ (or both) is not in $Y$ (in particular, it is
  non-zero). This yields a collection of $n$ disjoint positive vectors
  in $X\setminus Y$, which is impossible by the first part of the
  proof.
\end{proof}

\bigskip

We now proceed to uniformly closed sublattices. Recall that $X$ is
assumed to be an Archimedean vector lattice (the Archimedean
property is not necessary for the three preceding results). We proved in
Corollary~\ref{u-subsp-codim-1} that uniformly closed subspaces of $X$
of codimension 1 are the kernels of order bounded functionals. Also,
we observed that every closed sublattice $Y$ of $C(\Omega)$ of
codimension 1 is the kernel of $\delta_t-\alpha\delta_s$ for some
$s\ne t$ in $\Omega$ and $\alpha\ge 0$; hence, $Y$ is the kernel of
the difference of two lattice homomorphisms in $C(\Omega)^*$. In
Theorem~2b of~\cite{Abramovich:90b}, the same result was established
for vector latices of simple functions. We are now going to extend
this to general vector lattices.

Recall that a positive operator $T$ between vector lattices is a
lattice homomorphism if and only if $x\wedge y=0$ implies $Tx\wedge Ty=0$. We
write $X^h$ for the set of all real-valued lattice homomorphisms on
$X$. Clearly, $X^h$ is a subset (but not a subspace) of
$X^\sim$. Recall that the elements of $X^h$ are atoms of $X^\sim$,
see, e.g., \cite[Theorem~1.85]{Aliprantis:03}; in particular, any two
elements of $X^h$ are either disjoint or proportional. It follows that
$\varphi\in X^\sim$ is a difference of two lattice homomorphisms, if and only if
$\varphi^+$ and $\varphi^-$ are lattice homomorphisms.

\begin{proposition}\label{sublat-codim-1}
  Uniformly closed sublattices of $X$ of codimension 1 are exactly the
  kernels of differences of two lattice homomorphisms in $X'$.
\end{proposition}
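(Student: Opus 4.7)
The plan is to prove both directions using Proposition~\ref{obdd-ker-u-closed}, which identifies uniform closedness of $\ker\varphi$ with order-boundedness of $\varphi$.

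For the easy direction, suppose $\varphi = \varphi_1 - \varphi_2$ for lattice homomorphisms $\varphi_1, \varphi_2 \in X'$ with $\varphi \neq 0$. Since each $\varphi_i$ is positive, $\varphi$ is order bounded, so $\ker\varphi$ is uniformly closed by Proposition~\ref{obdd-ker-u-closed} and clearly has codimension~1. To see that $\ker\varphi$ is a sublattice, take $x\in\ker\varphi$; then $\varphi_1(x) = \varphi_2(x)$, so that using $\varphi_i(\abs{x}) = \abs{\varphi_i(x)}$ I conclude $\varphi(\abs{x}) = \abs{\varphi_1(x)} - \abs{\varphi_2(x)} = 0$.

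For the converse, let $Y$ be a uniformly closed sublattice of codimension~1, write $Y = \ker\varphi$, and note by Proposition~\ref{obdd-ker-u-closed} that $\varphi\in X^\sim$, so the Jordan decomposition $\varphi = \varphi^+ - \varphi^-$ is available. By the remark preceding the proposition it suffices to show that $\varphi^+$ and $\varphi^-$ are lattice homomorphisms. I plan to do this directly: since $\varphi^+$ is positive, it is a lattice homomorphism iff $x\wedge y = 0$ with $x,y\ge 0$ implies $\varphi^+(x)\wedge\varphi^+(y) = 0$. Assuming for contradiction that both are strictly positive, the Riesz--Kantorovich formula $\varphi^+(x) = \sup\{\varphi(u): 0\le u\le x\}$ yields $u\in[0,x]$ and $v\in[0,y]$ with $\varphi(u),\varphi(v) > 0$; necessarily $u\wedge v = 0$. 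The key step is to consider the specific element $z := \varphi(v)\,u - \varphi(u)\,v$, which satisfies $\varphi(z) = 0$ and hence lies in $\ker\varphi$. Because $u$ and $v$ are disjoint, one has $\abs{z} = \varphi(v)\,u + \varphi(u)\,v$, and the sublattice hypothesis on $\ker\varphi$ forces $\abs{z}\in\ker\varphi$. But then $0 = \varphi(\abs{z}) = 2\varphi(u)\varphi(v) > 0$, a contradiction. Applying the same argument to $-\varphi$ (whose kernel is still $Y$) shows that $\varphi^- = (-\varphi)^+$ is a lattice homomorphism as well.

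The main conceptual hurdle is identifying the witness $z$. Once the disjoint pair $(u,v)$ with $\varphi(u),\varphi(v) > 0$ is in hand, the particular combination $\varphi(v)\,u - \varphi(u)\,v$ is what decouples cleanly under Jordan cancellation: the linear form vanishes on it while its modulus is a strictly positive linear combination of disjoint pieces, which is exactly the obstruction to $\ker\varphi$ being a sublattice. Everything else is routine in light of Proposition~\ref{obdd-ker-u-closed} and the characterization that $\varphi\in X^\sim$ is a difference of two lattice homomorphisms precisely when $\varphi^+$ and $\varphi^-$ are themselves lattice homomorphisms, which was recorded just above the proposition.
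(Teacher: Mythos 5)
Your proof is correct and follows essentially the same route as the paper: both directions rest on Corollary~\ref{u-subsp-codim-1}/Proposition~\ref{obdd-ker-u-closed}, and in the converse your witness $z=\varphi(v)\,u-\varphi(u)\,v$ is just a rescaling of the paper's $u-\lambda v$ with $\lambda=\varphi(u)/\varphi(v)$, exploited in the identical way via $\abs{z}=\varphi(v)\,u+\varphi(u)\,v\in\ker\varphi$. The only cosmetic difference is in the easy direction, where you verify closure under $x\mapsto\abs{x}$ using $\varphi_i(\abs{x})=\abs{\varphi_i(x)}$ while the paper simply observes that the agreement set of two real-valued lattice homomorphisms is a sublattice.
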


\begin{proof}
  Suppose that $Y=\ker(\varphi-\psi)$ where $\varphi$ and $\psi$ are
  lattice homomorphisms.  Being the set on which $\varphi$ and $\psi$
  agree, $Y$ is a sublattice; it is uniformly closed by
  Corollary~\ref{u-subsp-codim-1}.

  Suppose that $Y$ is a uniformly closed sublattice of $X$ of
  codimension 1. By Corollary~\ref{u-subsp-codim-1}, $Y=\ker\varphi$
  for some $\varphi\in X^\sim$. It suffices to show that $\varphi^+$
  and $\varphi^-$ are lattice homomorphisms. Let $x\wedge y=0$; we
  need to show that $\varphi^+(x)\wedge\varphi^+(y)=0$ or,
  equivalently, that either $\varphi^+(x)=0$ or
  $\varphi^+(y)=0$. Suppose not, then by Riesz-Kantorovich Formulae,
  there exist $u\in[0,x]$ and $v\in[0,y]$ such that $\varphi(u)$ and
  $\varphi(v)$ are greater than zero. Then
  $\varphi(u-\lambda v)=\varphi(u)-\lambda\varphi(v)=0$ for some
  $\lambda>0$, hence $u-\lambda v\in\ker\varphi$. Since $u\perp v$ and
  $\ker\varphi$ is a sublattice, it follows that
  $u+\lambda v=\abs{u-\lambda v}\in\ker\varphi$; which is impossible.
  The proof that $\varphi^-$ is a lattice homomorphism is similar.
\end{proof}

\begin{example}
  There exist non-uniformly closed sublattices of
  codimension~1. Indeed, by Proposition~\ref{sublats-exist},
  $L_p[0,1]$ ($1\le p<\infty$) contains a sublattice of codimension
  1. It is not uniformly closed as $L_p[0,1]$ admits no real-valued
  lattice homomorphisms.

  Here is another example. Recall that linear functional $\varphi$ is
  \term{disjointness preserving} (d.p.) if $x\perp y$ implies
  $\varphi(x)=0$ or $\varphi(y)=0$. In this case, $\ker\varphi$ is a
  sublattice because if $x\in\ker\varphi$ then either $\varphi(x^+)=0$
  or $\varphi(x^-)=0$; in either case,
  $\varphi\bigl(\abs{x}\bigr)=0$. There exist d.p.
  functionals $\varphi$ that are not order bounded;
  see~\cite{Schep:16} and references there. By
  Proposition~\ref{obdd-ker-u-closed}, $\ker\varphi$ is not uniformly
  closed.
\end{example}

\begin{question}
  Let $\varphi$ be a functional on $X$.
  Propositions~\ref{obdd-ker-u-closed}, \ref{ker-full},
  \ref{ker-ideal}, and~\ref{sublat-codim-1} relate various order
  properties of $\varphi$ with those of $\ker\varphi$. However, one
  property is clearly missing from this list: it would be interesting
  to characterize when $\ker\varphi$ is a sublattice
  (not necessarily uniformly closed).

  Lemma~2 in~\cite{Abramovich:90b} or Proposition~\ref{sublat-disj-2m}
  in the current paper yield the following necessary condition: for
  every three disjoint vectors, $\varphi$ must vanish at at least one
  of them. Thus, we have the following:
  $\varphi$ is d.p. $\Rightarrow$ $\ker\varphi$ is a sublattice
  $\Rightarrow$ $\varphi$ is 3-d.p.
  % \begin{displaymath}
  %   \varphi\mbox{ is d.p.}\quad\Rightarrow\quad
  %   \ker\varphi\mbox{ is a sublattice}\quad\Rightarrow\quad
  %     \varphi\mbox{ is 3-d.p.}
  % \end{displaymath}
\end{question}

We will now show in Theorem~\ref{closures} that much of
Theorem~\ref{CK-sublat-closure} can be generalized from $C(\Omega)$ to
general vector lattices, replacing evaluation functionals with lattice
homomorphisms. In fact, most of Theorem~\ref{CK-sublat-closure} may be
deduced from Theorem~\ref{closures}. We start with some linear algebra
lemmas.

For a subset $A$ of a vector space $E$ and $n\in\mathbb N$, we write
\begin{displaymath}
  \Span_nA=\Bigl\{\alpha_1x_1+\dots+\alpha_nx_n\mid
  x_1,\dots,x_n\in A,\ \alpha_1,\dots,\alpha_n\in\mathbb R\bigr\}.
\end{displaymath}

\begin{lemma}\label{span-n}
  Let $F$ be a subspace of a vector space $E$, $A\subseteq E'$, and
  $n\in\mathbb N$. For $x\in E$,
  $x\in\bigl(\Span_nA\cap F^\perp\bigr)_\perp$ if and only if for every
  $\varphi_1,\dots,\varphi_n\in A$ there exists $y\in F$ with
  $\varphi_i(x)=\varphi_i(y)$ for $i=1,\dots,n$.
\end{lemma}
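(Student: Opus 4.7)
The plan is to prove both directions by translating the statement into a question about the finite-dimensional image of $F$ under the evaluation map determined by $\varphi_1,\dots,\varphi_n$.

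For the easy direction, I would argue as follows. Assume that for every choice of $\varphi_1,\dots,\varphi_n\in A$ there exists $y\in F$ with $\varphi_i(x)=\varphi_i(y)$ for $i=1,\dots,n$. Take any $\psi\in\Span_nA\cap F^\perp$ and write $\psi=\sum_{i=1}^n\alpha_i\varphi_i$ with $\varphi_i\in A$. Pick the corresponding $y\in F$. Then
\[
\psi(x)=\sum_{i=1}^n\alpha_i\varphi_i(x)=\sum_{i=1}^n\alpha_i\varphi_i(y)=\psi(y)=0,
\]
because $\psi$ vanishes on $F$. Hence $x\in(\Span_nA\cap F^\perp)_\perp$.

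For the forward direction, suppose $x\in(\Span_nA\cap F^\perp)_\perp$, fix $\varphi_1,\dots,\varphi_n\in A$, and consider the linear map $T\colon F\to\mathbb R^n$, $Ty=\bigl(\varphi_1(y),\dots,\varphi_n(y)\bigr)$. I need to show that the vector $v:=\bigl(\varphi_1(x),\dots,\varphi_n(x)\bigr)$ lies in the subspace $T(F)\subseteq\mathbb R^n$. If it did not, then by a standard hyperplane separation in $\mathbb R^n$ (i.e.\ the annihilator of a proper subspace is nonzero), I could find scalars $\alpha_1,\dots,\alpha_n$, not all zero, such that $\sum_i\alpha_i\varphi_i(y)=0$ for every $y\in F$ but $\sum_i\alpha_i\varphi_i(x)\ne 0$. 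Setting $\psi=\sum_i\alpha_i\varphi_i$, I would have $\psi\in\Span_nA\cap F^\perp$ yet $\psi(x)\ne 0$, contradicting the hypothesis.

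Neither direction presents a real obstacle; the only mild subtlety is the separation argument, which is just the elementary fact that a vector in $\mathbb R^n$ failing to lie in a given subspace can be detected by a linear functional that kills that subspace. Everything else is direct linear algebra, which is exactly the sort of bookkeeping lemma needed to apply the $\Span_nA$-type characterisations in the style of $Z_3$ and $Z_4$ from Theorem~\ref{CK-sublat-closure} to the more abstract setting of Theorem~\ref{closures}.
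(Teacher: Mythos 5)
Your proof is correct and follows essentially the same route as the paper: the easy direction is the identical computation, and the forward direction uses the same evaluation map $T$ into $\mathbb R^n$ together with the same finite-dimensional separation argument (the paper phrases it contrapositively, you phrase it by contradiction, which is immaterial).
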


Before proving the lemma let us first demystify its statement. The set
$\Span_nA\cap F^\perp$ consists of the elements of $\Span_n A$ which
vanish on $F$, and so the first set is the maximal subspace on which
these functionals vanish. Meanwhile, the other set consists of the
vectors that allow $n$-nod interpolation by the elements of $F$ with
respect to $A$.

\begin{proof}
  Denote the sets in question by $H$ and $G$. Suppose that $x\in G$ and
  $\psi\in \Span_nA\cap F^\perp$. Then
  $\psi=\alpha_1\varphi_1+\dots+\alpha_n\varphi_n$ for some
  $\varphi_1,\dots,\varphi_n\in A$ and
  $\alpha_1,\dots,\alpha_n\in\mathbb R$. Since $x\in G$, there exists
  $y\in F$ such that $\varphi_i(x)=\varphi_i(y)$ as $i=1,\dots,n$. It
  follows that
  \begin{math}
    \psi(x)=\psi(y)=0
  \end{math}
  because $\psi\in F^\perp$. This yields  $x\in\bigl(\Span_nA\cap
  F^\perp\bigr)_\perp$. Hence $G\subseteq H$.

  Suppose $x\notin G$. Then there exist
  $\varphi_1,\dots,\varphi_n\in A$ such that $Tx\notin TF$, where
  $T\colon E\to\mathbb R^n$, $(Tx)_i=\varphi_i(x)$. It follows that
  there is a functional on $\mathbb R^n$ that separates $Tx$ from
  $TF$. That is, there exist $\bar a=(\alpha_1,\dots,\alpha_n)$ such
  that $0=\langle\bar a,Ty\rangle=\sum_{i=1}^n\alpha_i\varphi_i(y)$
  for all $y\in F$ but $\langle\bar a,Tx\rangle\ne 0$. Put
  $\psi=\sum_{i=1}^n\alpha_i\varphi_i$; then $\psi$ vanishes on $F$
  but $\psi(x)\ne 0$, so $x\notin H$.
\end{proof}

\begin{corollary}\label{span-perp}
  In the setting of Lemma~\ref{span-n}, $x\in\bigl(\Span A\cap
  F^\perp\bigr)_\perp$ if and only if for every $n$ and every
  $\varphi_1,\dots,\varphi_n\in A$ there exists $y\in F$ such that
  $\varphi_i(x)=\varphi_i(y)$ for $i=1,\dots,n$.
\end{corollary}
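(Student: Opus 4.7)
The plan is to derive this directly from Lemma~\ref{span-n} by writing $\Span A$ as the increasing union $\bigcup_{n=1}^\infty \Span_n A$ and observing that the pre-annihilator operation turns unions into intersections.

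Concretely, first I would note that $\Span A \cap F^\perp = \bigcup_{n=1}^\infty\bigl(\Span_n A \cap F^\perp\bigr)$, since every element of $\Span A$ is a linear combination of finitely many elements of $A$, hence lies in $\Span_n A$ for some $n$. Taking pre-annihilators then gives
\begin{displaymath}
  \bigl(\Span A \cap F^\perp\bigr)_\perp
  = \bigcap_{n=1}^\infty\bigl(\Span_n A \cap F^\perp\bigr)_\perp,
\end{displaymath}
because for any family $\{B_n\}$ of subsets of $E'$, we have $\bigl(\bigcup_n B_n\bigr)_\perp = \bigcap_n (B_n)_\perp$ (a vector annihilates every element of a union iff it annihilates every element of each set).

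From there, $x\in\bigl(\Span A\cap F^\perp\bigr)_\perp$ is equivalent to $x\in\bigl(\Span_n A\cap F^\perp\bigr)_\perp$ holding for every $n\in\mathbb N$. By Lemma~\ref{span-n}, the latter is precisely the statement that for every $n$ and every choice of $\varphi_1,\dots,\varphi_n\in A$ there exists $y\in F$ with $\varphi_i(x)=\varphi_i(y)$ for $i=1,\dots,n$. This is exactly the condition in the corollary.

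There is no substantive obstacle here, since all the real work was done in Lemma~\ref{span-n}; the only thing to be careful about is the elementary set-theoretic identity for pre-annihilators of unions, which is immediate from the definition of $(\,\cdot\,)_\perp$.
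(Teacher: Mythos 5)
Your argument is correct and is precisely the reasoning the paper leaves implicit by stating the result as an unproved corollary of Lemma~\ref{span-n}: decompose $\Span A$ as the increasing union $\bigcup_n \Span_n A$, use that pre-annihilators convert unions into intersections, and apply the lemma for each $n$. Nothing further is needed.
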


\begin{theorem}\label{closures}
  Let $Y$ be a sublattice of $X$. The following sets are equal:
  \begin{eqnarray*}
    Y_1&=&\Bigl\{z\in X\mid \forall\mbox{ finite }F\subseteq X^h
           \ \exists y\in Y,\mbox{ $y$ and $z$ agree on $F$}\Bigr\};\\
   Y_2&=&\Bigl\{z\in X\mid \forall h_1,h_1\in X^h\
           \exists y\in Y,\ h_1(z)=h_1(y)\mbox{ and }h_2(z)=h_2(y)\Bigr\};\\
    Y_3&=&\Bigl(\bigl(\Span X^h\bigr)\cap Y^\perp\Bigr)_\perp;\\
    Y_4&=&\Bigl(\bigl(\Span_2 X^h\bigr)\cap Y^\perp\Bigr)_\perp;\\
    Y_5&=&\Bigl(\bigl\{h_1-h_2 \mid h_1,h_2\in X^h\}
           \cap Y^\perp\Bigr)_\perp;\\
    Y_6&=&\mbox{the intersection of all uniformly closed sublattices of }X\\
      &&\qquad\mbox{of codimension at most 1 that contain $Y$}.
  \end{eqnarray*}
\end{theorem}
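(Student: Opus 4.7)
The plan is to establish the cycle $Y_1 \subseteq Y_2 \subseteq Y_5 \subseteq Y_1$ together with the identifications $Y_1 = Y_3$, $Y_2 = Y_4$, and $Y_5 = Y_6$. The equalities $Y_1 = Y_3$ and $Y_2 = Y_4$ are direct applications of Corollary~\ref{span-perp} and of Lemma~\ref{span-n} with $n=2$, both applied with $E = X$, $F = Y$, and $A = X^h$. The inclusion $Y_1 \subseteq Y_2$ is immediate from restricting the finite set in the definition of $Y_1$ to a pair, and $Y_4 \subseteq Y_5$ follows from $\{h_1 - h_2 : h_1, h_2 \in X^h\} \subseteq \Span_2 X^h$, since pre-annihilators reverse inclusions.

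For $Y_5 = Y_6$, I would invoke Proposition~\ref{sublat-codim-1}: the uniformly closed sublattices of $X$ of codimension $1$ are exactly the kernels of differences $h_1 - h_2$ of two lattice homomorphisms, and the only uniformly closed sublattice of codimension $0$ is $X$ itself (which corresponds to $h_1 = h_2$). Such a kernel contains $Y$ if and only if $h_1 - h_2 \in Y^\perp$, so intersecting all such kernels produces exactly $Y_5$.

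The crux is $Y_5 \subseteq Y_1$. Fix $z \in Y_5$ and a finite family $h_1, \dots, h_n \in X^h$, and let $T \colon X \to \mathbb R^n$ be defined by $Tx = \bigl(h_1(x), \dots, h_n(x)\bigr)$. Since each coordinate of $T$ is a lattice homomorphism, so is $T$, hence $T(Y)$ is a sublattice of $\mathbb R^n$. By Corollary~\ref{sublat-Rn}, $T(Y) = \Span\{v_1, \dots, v_k\}$ for some disjoint positive vectors $v_1, \dots, v_k$. Setting $A_j = \supp v_j$ and $A_0 = \{1, \dots, n\} \setminus \bigcup_j A_j$, the subspace $T(Y)$ is cut out by the equations $x_s = 0$ for $s \in A_0$ and $x_s = (v_{j,s}/v_{j,t}) \, x_t$ for $s, t \in A_j$. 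Each such equation is the vanishing on $Y$ of a functional of the form $h_s$ or $h_s - \beta h_t$ with $\beta > 0$; each of these is a difference of two lattice homomorphisms lying in $Y^\perp$ (one may write $h_s = 2h_s - h_s$ to place it in this form). Since $z \in Y_5$, all these functionals vanish on $z$ as well, so $Tz$ obeys every defining equation of $T(Y)$, giving $Tz \in T(Y)$. This produces $y \in Y$ with $h_i(y) = h_i(z)$ for each $i$, i.e., $z \in Y_1$.

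The main obstacle is this last step. It reduces the infinite-dimensional interpolation problem to the finite-dimensional structure theorem for sublattices of $\mathbb R^n$ and requires the observation that the linear constraints describing $T(Y)$ pull back through $T$ to differences of lattice homomorphisms in $Y^\perp$, which we then feed into the hypothesis $z \in Y_5$.
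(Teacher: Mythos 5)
Your proposal is correct, and its skeleton (the identifications $Y_1=Y_3$ and $Y_2=Y_4$ via Corollary~\ref{span-perp} and Lemma~\ref{span-n}, $Y_5=Y_6$ via Proposition~\ref{sublat-codim-1}, plus the trivial inclusions) matches the paper; the genuine difference is in the one substantive inclusion. The paper closes the loop by proving $Y_5\subseteq Y_3$ directly: given $f=\sum_i\alpha_ih_i\in\Span X^h\cap Y^\perp$, it restricts the $h_i$ to $Y$, notes that the nonzero restrictions are atoms of $Y^\sim$ and hence pairwise disjoint or proportional, groups the proportional ones (each group must separately vanish on $Y$ by linear independence of non-proportional atoms), and then applies the hypothesis $z\in Y_5$ within each group to conclude $f(z)=0$. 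You instead prove $Y_5\subseteq Y_1$ by pushing $Y$ forward under the lattice homomorphism $T=(h_1,\dots,h_n)\colon X\to\mathbb R^n$ and invoking Corollary~\ref{sublat-Rn} to write $T(Y)$ as the span of disjoint positive vectors, so that membership in $T(Y)$ is detected precisely by functionals of the form $h_s$ and $h_s-\beta h_t$ with $\beta>0$ lying in $Y^\perp$ --- exactly the functionals that $z\in Y_5$ controls. Both arguments rest on the same dichotomy (lattice homomorphisms are disjoint or proportional), but yours trades the dual-side computation with atoms in $Y^\sim$ for the already-established finite-dimensional structure theorem, and in doing so it directly generalizes the $Z_3\subseteq Z_2$ step in the proof of Theorem~\ref{CK-sublat-closure} from two evaluation functionals to $n$ lattice homomorphisms; the price is that you must check that the linear equations cutting out a span of disjoint positive vectors pull back to differences of homomorphisms, which you do correctly (including the device $h_s=2h_s-h_s$ and the degenerate case $T(Y)=\{0\}$).
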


\begin{proof}
  It is clear that $Y_3\subseteq Y_4\subseteq Y_5$. Also, $Y_1=Y_3$
  and $Y_2=Y_4$ by Corollary~\ref{span-perp} and Lemma~\ref{span-n},
  respectively. By Proposition~\ref{sublat-codim-1}, $Y_5=Y_6$. We
  will now show that $Y_5\subseteq Y_3$; this will complete the proof.

  Suppose $z\in Y_5$. This means that if $h_1$ and $h_2$ in
  $X^h$ agree on $Y$ then they also agree at $z$. In
  particular, if $h\in X^h$ vanishes on $Y$ then
  $h(z)=0$. Suppose now that
  $f\in\bigl(\Span X^h\bigr)\cap Y^\perp$; we need to show that
  $f(z)=0$. Let $f=\alpha_1h_1+\dots+\alpha_nh_n$, where
  $h_1,\dots,h_n\in X^h$. For every $i=1,\dots,n$, if $h_i$
  vanishes on $Y$ then $h_i(z)=0$, so we may assume that no $h_i$'s
  vanishes on $Y$.

  The restriction of every $h_i$ to $Y$ is a lattice homomorphism,
  hence an atom in $Y^\sim$. Since any two atoms are either disjoint
  or proportional and non-proportional atoms are linearly independent,
  we may split the sum $\alpha_1h_1+\dots+\alpha_nh_n$ into groups
  corresponding to non-proportional atoms in $Y^\sim$; then each group
  vanishes on $Y$. Hence, without loss of generality, we may assume
  that we have only one group, i.e., that ${h_i}_{|Y}$'s are all
  proportional to some lattice homomorphism $g$ on $Y$, i.e.,
  ${h_i}_{|Y}=\beta_ig$. Since ${h_i}_{|Y}\ne 0$, we have $g\ne 0$ and
  $\beta_i\ne 0$ for all $i$.

  It follows from
  \begin{math}
    0=f_{|Y}=\sum_{i=1}^n\alpha_i{h_i}_{|Y}
    =\Bigl(\sum_{i=1}^n\alpha_i\beta_i\Bigr)g
  \end{math}
  that $\sum_{i=1}^n\alpha_i\beta_i=0$. Also, for any $i\ne j$, the
  restrictions of $\frac{1}{\beta_i}h_i$ and $\frac{1}{\beta_j}h_j$ to
  $Y$ both equal $g$, hence are equal to each other. Since $z\in Y_5$
  it follows that
  $\frac{1}{\beta_i}h_i(z)=\frac{1}{\beta_j}h_j(z)$. This means that
  $\frac{1}{\beta_i}h_i(z)$ does not depend on $i$; denote it
  $\lambda$. It follows that
  \begin{displaymath}
    f(z)=\sum_{i=1}^n\alpha_ih_i(z)=\sum_{i=1}^n\alpha_i\beta_i\lambda=0.
  \end{displaymath}
\end{proof}

Unlike Theorem~\ref{CK-sublat-closure}, the sets in
Theorem~\ref{closures} no longer equal the uniform closure of $Y$.
It would be interesting to characterize vector lattices (in
particular, Banach lattices) in which every uniformly closed
sublattice $Y$ can be written as an intersection of uniformly closed
sublattices of codimension one. In this case, the six sets in
Theorem~\ref{closures} equal $\overline{Y}$. We will show in
Theorem~\ref{codim-inters} that this is the case when $Y$ is of finite
codimension. On the other hand, this fails when $X$ is a non-atomic
order continuous Banach lattice as in this case $X$ has no closed
sublattices of finite codimension (see Corollary~\ref{nonatom-ocont}),
hence $Y_6=X$, even if $Y$ is a proper uniformly closed sublattice
(even an ideal) of $X$.

\begin{question}
  Suppose that $X^h$ separates the points of $X$. Does this imply that
  every uniformly closed sublattice can be written as an intersection
  of uniformly closed sublattices of codimension one?
\end{question}

We will now extend Corollary~\ref{CK-sublat-ideal} to general vector
latices.

\begin{lemma}\label{no-ideals}
  Suppose that $X$ has a uniformly closed sublattice of
  codimension $n$ such that it contains no ideals of $X$. Then $\dim
  X\le 2n$.
\end{lemma}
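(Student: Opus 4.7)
The plan is to localize the hypothesis to each principal ideal via the Kakutani-Krein representation, import Corollary~\ref{CK-sublat-ideal} there, and then patch the local dimension bounds into a global one. Concretely, I aim to show that $\dim I_u \le 2n$ for every $u \in X_+$; a short maximality argument then forces $X$ itself to coincide with some principal ideal $I_{u_0}$, giving $\dim X \le 2n$.

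Fix $u \in X_+$ and identify $(I_u, \norm{\cdot}_u)$ with a dense sublattice of $C(K_u)$ for the appropriate compact Hausdorff $K_u$. Because $Y$ is uniformly closed in $X$, the sublattice $Y \cap I_u$ is $\norm{\cdot}_u$-closed in $I_u$; by the linear-algebra fact in the introduction its codimension in $I_u$ is at most $n$. Its closure $\tilde Y$ in $C(K_u)$ is then a closed sublattice of codimension at most $n$ by Lemma~\ref{codim-compl}, so Corollary~\ref{CK-sublat-ideal} provides a closed ideal $J$ of $C(K_u)$ with $J \subseteq \tilde Y$ and $\codim_{C(K_u)} J \le 2n$. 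The uniform-closedness of $Y \cap I_u$ inside $I_u$ gives $\tilde Y \cap I_u = Y \cap I_u$, and one checks that $J \cap I_u$ is an ideal of $X$: if $x \in J \cap I_u$ and $\abs{y} \le \abs{x}$ in $X$, then $\abs{y} \le \lambda u$ places $y$ in $I_u \subseteq C(K_u)$, and $y \in J$ since $J$ is an ideal of $C(K_u)$. Hence $J \cap I_u$ is an ideal of $X$ contained in $Y$, and the hypothesis forces $J \cap I_u = \{0\}$; combined with $\codim_{I_u}(J \cap I_u) \le 2n$, this yields $\dim I_u \le 2n$.

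With the uniform bound $\dim I_u \le 2n$ in hand, pick $u_0 \in X_+$ maximizing $\dim I_{u_0}$. If $I_{u_0} \ne X$, take any $w \in X \setminus I_{u_0}$; at least one of $w^+, w^-$ lies outside $I_{u_0}$, so replacing $u_0$ by $u_0 + w^+$ or $u_0 + w^-$ produces a principal ideal strictly containing $I_{u_0}$, contradicting maximality. Thus $X = I_{u_0}$ and $\dim X \le 2n$.

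I expect the real work to be in the transfer step: verifying that $J \cap I_u$, constructed abstractly in a completion, is actually an ideal of the ambient lattice $X$ and is actually contained in $Y$ (rather than merely in $\tilde Y$). Both ingredients---that $J \cap I_u$ is an ideal of $X$ and the identity $\tilde Y \cap I_u = Y \cap I_u$---hinge on $I_u$ being an order ideal of $X$ together with uniform-closedness of $Y$ translating into $\norm{\cdot}_u$-closedness of $Y \cap I_u$ in $I_u$. Once these are pinned down the rest is essentially the linear-algebra facts from the introduction plus the maximality argument above.
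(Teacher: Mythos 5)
The proposal is correct and follows essentially the same route as the paper: restrict to a principal ideal, pass to $C(K)$ via Krein--Kakutani and Lemma~\ref{codim-compl}, extract a closed ideal of $C(K)$ of codimension at most $2n$ inside the closure of $Y\cap I_u$, pull it back to an ideal of $X$ contained in $Y$, and let the hypothesis force it to be zero. The only differences are in execution: you bound $\dim I_u$ for every $u\in X_+$ and globalize by a maximality argument where the paper argues by contradiction with a single well-chosen $e$ satisfying $\dim I_e>2n$, and you finish with the direct codimension count $\codim_{I_u}(J\cap I_u)\le\codim_{C(K_u)}J\le 2n$ where the paper instead shows by a density/interpolation argument that $K$ must equal the finite set $\{s_1,\dots,s_{2n}\}$ --- your version of that last step is, if anything, a little cleaner.
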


\begin{proof}
  Let $Y$ be such a sublattice. Suppose that $\dim X>2n$. Then we can
  find linearly independent $x_1,\dots,x_{2n+1}$ in $X$. Put
  $e=\bigvee_{i=1}^{2n+1}\abs{x_i}$. Then $x_i\in I_e$ as
  $i=1,\dots,2n+1$, hence $\dim I_e>2n$. Clearly, $Y\cap I_e$ is a
  sublattice of codimension at most $n$ in $I_e$, closed with respect
  to $\norm{\cdot}_e$.

  By Krein-Kakutani's Representation Theorem, we may identify $I_e$
  with a dense sublattice of $C(K)$. Let $Z$ be the closure of
  $Y\cap I_e$ in $C(K)$. By Lemma~\ref{codim-compl}, $Z$ is a closed
  sublattice of $C(K)$ of codimension at most $n$.  By
  Theorem~\ref{CK-sublat-closure},
  $Z=\bigcap_{i=1}^n\ker(\delta_{s_{2i-1}}-\alpha_i\delta_{s_{2i}})$
  for some $s_1,\dots,s_{2n}$ in $K$ and $\alpha_i\ge 0$. Put
  $J=\bigcap_{i=1}^{2n}\ker\delta_{s_i}$. Then $J$ is an ideal of
  $C(K)$ contained in $Z$. It follows that $J\cap I_e$ is an ideal in
  $I_e$, hence in $X$, contained in $Y$. By assumption,
  $J\cap I_e=\{0\}$.

  Let $t\notin\{s_1,\dots,s_{2n}\}$. Clearly, there exists $f\in C(K)$
  such that $f$ vanishes on $\{s_1,\dots,s_{2n}\}$ but not at
  $t$. Since $I_e$ is a dense sublattice of $C(K)$, we could use $Z_1$ in
  Theorem~\ref{CK-sublat-closure} to find a function $g\in I_e$
  which vanishes on $\{s_1,\dots,s_{2n}\}$ but not at $t$; this would
  contradict $J\cap I_e=\{0\}$. It follows that such a $t$ does not
  exist, i.e., $K=\{s_1,\dots,s_{2n}\}$. This yields $\dim
  I_e\le\dim C(K)\le 2n$, which is a contradiction.
\end{proof}

\begin{proposition}\label{codim-lat-ideal}
  Every uniformly closed sublattice of $X$ of codimension $n$ contains
  a uniformly closed ideal of $X$ of codimension at most $2n$.
\end{proposition}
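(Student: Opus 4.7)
The plan is to quotient $X$ by the largest ideal of $X$ contained in $Y$ and then invoke Lemma~\ref{no-ideals} to bound the codimension of this ideal in $X$.

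First I would let $J$ be the sum of all ideals of $X$ that are contained in $Y$. Since the sum of ideals is an ideal and each summand lies in $Y$, $J$ is itself an ideal and is the largest ideal of $X$ contained in $Y$. The first verification is that $J$ is uniformly closed. For this I would show more generally that the uniform closure of any ideal $I$ is again an ideal: given $x_\alpha\goesu x$ with $x_\alpha\in I$ and $\abs{y}\le\abs{x}$, the truncations $y_\alpha:=\bigl(y\wedge\abs{x_\alpha}\bigr)\vee\bigl(-\abs{x_\alpha}\bigr)$ satisfy $\abs{y_\alpha}\le\abs{x_\alpha}$, so $y_\alpha\in I$, and the standard Lipschitz-type estimates for $\wedge$ and $\vee$ give $\abs{y_\alpha-y}\le 2\bigabs{\abs{x_\alpha}-\abs{x}}\le 2\abs{x_\alpha-x}$, so $y_\alpha\goesu y$. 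Since $Y$ is uniformly closed and $J\subseteq Y$, the uniform closure of $J$ is an ideal contained in $Y$, hence equals $J$ by maximality.

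Because $J$ is a uniformly closed ideal, $X/J$ is Archimedean by Theorem~60.2 of~\cite{Luxemburg:71}, and if $q\colon X\to X/J$ is the quotient map then $\bar{Y}:=q(Y)$ is a sublattice of codimension $n$. By maximality of $J$, $\bar Y$ contains no nonzero ideals of $X/J$, since the $q$-preimage of any such ideal would be an ideal of $X$ trapped between $J$ and $Y$, hence equal to $J$. The main obstacle will be to show that $\bar Y$ is uniformly closed in $X/J$, which I plan to handle via the lifting property of order intervals in quotients: if $\abs{\bar w}\le\lambda\bar e$ in $X/J$ and $e\in X_+$ lifts $\bar e$, then for any preliminary lift $w_0$ of $\bar w$ the element $w:=(w_0\vee(-\lambda e))\wedge\lambda e$ still satisfies $q(w)=\bar w$ and obeys $\abs{w}\le\lambda e$. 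Applying this to $\bar y_\alpha-\bar z$ in a putative uniform convergence $\bar y_\alpha\goesu\bar z$ with $\bar y_\alpha\in\bar Y$, and setting $y_\alpha':=w_\alpha+z$ for a fixed lift $z$ of $\bar z$, produces a net in $q^{-1}(\bar Y)=Y$ converging uniformly in $X$ to $z$; uniform closure of $Y$ then forces $z\in Y$ and hence $\bar z\in\bar Y$.

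With $\bar Y$ now identified as a uniformly closed sublattice of codimension $n$ in the Archimedean lattice $X/J$ containing no nonzero ideals of $X/J$, Lemma~\ref{no-ideals} yields $\dim(X/J)\le 2n$, i.e., $\codim_X J\le 2n$, completing the proof.
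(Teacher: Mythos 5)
Your proposal is correct and follows essentially the same route as the paper: take the largest ideal $J$ of $X$ contained in $Y$, check that $J$ is uniformly closed (so that $X/J$ is Archimedean), observe that $Q(Y)$ is a uniformly closed sublattice of codimension $n$ in $X/J$ containing no nonzero ideals, and apply Lemma~\ref{no-ideals}. The only difference is that where the paper cites Theorem~63.1 and Corollary~63.4 of~\cite{Luxemburg:71}, you supply the direct truncation and lifting arguments, both of which are sound.
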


\begin{proof}
  Let $Y$ be a uniformly closed sublattice of $X$ of codimension
  $n$. Let $J$ be the union of all the ideals of $X$ contained in
  $Y$. It is clear that $J$ is an ideal of $X$. Since the uniform
  closure of an ideal is again an ideal by Theorem~63.1
  in~\cite{Luxemburg:71}, it follows that $J$ is uniformly closed in
  $X$. Therefore, the quotient $X/J$ is Archimedean by Theorem~60.2
  in~\cite{Luxemburg:71}. Let $Q\colon X\to X/J$ be the quotient map.

  Clearly, $Q(Y)$ is a sublattice of codimension $n$ in $X/J$.
  It follows from Corollary~63.4 in~\cite{Luxemburg:71} (or can be
  easily verified directly) that $QY$ is uniformly closed in
  $X/J$. It follows from the maximality of $J$ in $Y$ that $Q(Y)$
  contains no proper non-trivial ideals: if $H$ were such an ideal
  that $Q^{-1}(H)$ would be an ideal of $X$ satisfying $J\subsetneq
  Q^{-1}(H)\subsetneq Y$, which would be a contradiction.

  It follows from Lemma~\ref{no-ideals} that $\dim(X/J)\le 2n$ and,
  therefore, $J$ has codimension at most $2n$ in $X$.
\end{proof}

\begin{remark}
  The preceding proposition has no infinite-codimensional analogue:
  the (infinite codimensional) closed sublattice of $C[-1,1]$
  consisting of all even functions contains no ideals.
\end{remark}

\begin{remark}
  The assumption that the sublattice is uniformly closed in
  Proposition~\ref{codim-lat-ideal} cannot be dropped. Indeed, by
  Proposition~\ref{sublats-exist}, $L_p[0,1]$ contains a sublattice of
  codimension one. But $L_p[0,1]$ has no ideals of codimension 2 or of
  any finite codimension; see Corollary~\ref{no-ideals-nonat}.
\end{remark}

\begin{question}
  Is the uniform closedness assumption in Lemma~\ref{no-ideals}
  necessary?
\end{question}

\begin{theorem}\label{codim-inters}
  Every uniformly closed sublattice of $X$ of codimension $n$ may be
  written as the intersection of $n$ uniformly closed sublattices of
  codimension 1.
\end{theorem}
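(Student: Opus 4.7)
The plan is to reduce to a finite-dimensional quotient via Proposition~\ref{codim-lat-ideal}. Let $Y$ be a uniformly closed sublattice of $X$ of codimension $n$. First I would apply Proposition~\ref{codim-lat-ideal} to obtain a uniformly closed ideal $J\subseteq Y$ of codimension at most $2n$ in $X$. Since $J$ is uniformly closed, the quotient $X/J$ is Archimedean by Theorem~60.2 in~\cite{Luxemburg:71}; being finite-dimensional, it possesses a strong unit, so by Krein--Kakutani it is lattice isomorphic to $\mathbb R^k$ for some $k\le 2n$. Let $Q\colon X\to X/J\cong\mathbb R^k$ denote the quotient map; this $Q$ is a lattice homomorphism.

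Next I would analyze $\tilde Y:=Q(Y)=Y/J$, which is a sublattice of $\mathbb R^k$ of codimension exactly $n$ (using $\codim_{X/J}(Y/J)=\codim_X Y$). By Corollary~\ref{sublat-Rn}, $\tilde Y=\Span\{z_1,\dots,z_{k-n}\}$ for some disjoint positive vectors $z_i\in\mathbb R^k$. Writing $S_i$ for the support of $z_i$ and $S_0:=\{1,\dots,k\}\setminus(S_1\cup\dots\cup S_{k-n})$, the sublattice $\tilde Y$ is precisely the set of vectors in $\mathbb R^k$ that vanish on $S_0$ and are proportional to $z_i$ on each $S_i$. These conditions split into $|S_0|$ constraints of the form $\ker\delta_s$ and $\sum_{i\ge 1}(|S_i|-1)$ constraints of the form $\ker(\delta_s-\alpha\delta_t)$ with $\alpha>0$; a count gives exactly
\begin{math}
|S_0|+\sum_{i\ge 1}(|S_i|-1)=k-(k-n)=n
\end{math}
constraints. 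Each such constraint defines a codimension-1 sublattice of $\mathbb R^k$, by the same reasoning (applied to $\Omega=\{1,\dots,k\}$) that justified the examples at the start of Section~\ref{CK-cl-sublat}.

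Finally I would pull everything back. Let $\tilde\varphi_1,\dots,\tilde\varphi_n$ be the functionals on $X/J$ whose kernels are the codim-1 sublattices above, and set $\varphi_i:=\tilde\varphi_i\circ Q$. Each $\varphi_i$ is order bounded, being the composition of the lattice homomorphism $Q$ with an order bounded functional on the finite-dimensional vector lattice $X/J$; therefore $\ker\varphi_i$ is uniformly closed by Corollary~\ref{u-subsp-codim-1}. It is a sublattice because the preimage of a sublattice under a lattice homomorphism is a sublattice, and it has codimension $1$ in $X$ because $\tilde\varphi_i\ne 0$ and $Q$ is surjective. Then
\begin{math}
Y=Q^{-1}(\tilde Y)=Q^{-1}\Bigl(\bigcap_{i=1}^n\ker\tilde\varphi_i\Bigr)=\bigcap_{i=1}^n\ker\varphi_i,
\end{math}
which is the desired decomposition.

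The main obstacle is the combinatorial bookkeeping in the middle paragraph: one must be careful that the constraints picked out by the clan/kernel structure of $\tilde Y$ in $\mathbb R^k$ number exactly $n$, not merely at most $n$. This is automatic here because $\codim_{\mathbb R^k}\tilde Y=n$, but it is the step where the precise count, rather than just Corollary~\ref{ucmpl-cl-n}, is needed to guarantee \emph{sublattice} (not merely subspace) representatives.
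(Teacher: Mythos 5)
Your proposal is correct and follows essentially the same route as the paper's proof: apply Proposition~\ref{codim-lat-ideal} to get a uniformly closed ideal $J\subseteq Y$ of codimension at most $2n$, pass to the finite-dimensional Archimedean quotient $X/J\cong\mathbb R^k$, describe $Q(Y)$ via Corollary~\ref{sublat-Rn}, and pull the codimension-one sublattices back through $Q$. The only difference is that you carry out the counting argument showing the number of constraints is exactly $n$ explicitly, a detail the paper leaves implicit.
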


\begin{proof}
  Let $Y$ be a uniformly closed sublattice of $X$ of codimension
  $n$. By Proposition~\ref{codim-lat-ideal}, $Y$ contains a uniformly
  closed ideal $J$ of $X$ of codimension $m\le 2n$. Let
  $Q\colon X\to X/J$ be the quotient map.  Clearly, $X/J$ is lattice
  isomorphic to $\mathbb R^m$ and $Q(Y)$ is a sublattice of $X/J$. It
  follows from Corollary~\ref{sublat-Rn} (or from
  Theorem~\ref{CK-sublat-closure}) that $Q(Y)$ may be written as the
  intersection of sublattices of codimension 1. Taking their
  preimages, we get the required representation for $Y$.
\end{proof}

Combining the theorem with Proposition~\ref{sublat-codim-1}, we get:

\begin{corollary}
  Every uniformly closed sublattice of codimension $n$ in $X$ may be
  written in the form $\bigcap_{i=1}^n\ker(\varphi_{2i-1}-\varphi_{2i})$, where
  $\varphi_1,\dots,\varphi_{2n}$ are lattice homomorphisms in $X^\sim$.
\end{corollary}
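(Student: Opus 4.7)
The plan is to simply combine Theorem~\ref{codim-inters} with Proposition~\ref{sublat-codim-1}; no new ideas are required, so the argument should be short.

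First I would invoke Theorem~\ref{codim-inters} applied to the given uniformly closed sublattice $Y$ of codimension $n$. This produces uniformly closed sublattices $Y_1,\dots,Y_n$ of $X$, each of codimension $1$, such that $Y=\bigcap_{i=1}^n Y_i$. (The theorem yields the existence of such a decomposition; I would not re-derive it.)

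Next, for each $i\in\{1,\dots,n\}$, I would apply Proposition~\ref{sublat-codim-1} to $Y_i$. That proposition characterizes uniformly closed sublattices of codimension $1$ as kernels of differences of two real-valued lattice homomorphisms; hence there exist $\varphi_{2i-1},\varphi_{2i}\in X^h\subseteq X^\sim$ with $Y_i=\ker(\varphi_{2i-1}-\varphi_{2i})$. Substituting this into the intersection gives
\[
Y=\bigcap_{i=1}^n\ker(\varphi_{2i-1}-\varphi_{2i}),
\]
which is the claimed representation.

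There is essentially no obstacle: both ingredients have already been established, and the statement is a straightforward concatenation of them. The only thing worth a brief remark is that the lattice homomorphisms $\varphi_{2i-1},\varphi_{2i}$ produced by Proposition~\ref{sublat-codim-1} lie in $X^\sim$ (indeed in $X^h$), so the requirement ``$\varphi_1,\dots,\varphi_{2n}\in X^\sim$'' is automatic.
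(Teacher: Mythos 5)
Your proposal is correct and is exactly the paper's argument: the authors state the corollary as an immediate combination of Theorem~\ref{codim-inters} with Proposition~\ref{sublat-codim-1}, precisely as you do. The only cosmetic point is that when some $Y_i$ is an ideal, one of the two homomorphisms for that $i$ is the zero functional (a degenerate ``difference''), which the paper acknowledges in the remark that some $\varphi_i$'s may equal zero.
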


Note that some of the $\varphi_i$'s in the corollary may coincide or
equal zero.  Note also that taking $J=\bigcap_{i=1}^{2n}\ker\varphi_i$
yields a uniformly closed ideal of codimension at most $2n$ as per
Proposition~\ref{codim-lat-ideal}.

\medskip

Let $S$ and $T$ be lattice homomorphisms from
$X$ to an Archimedean vector lattice $Z$ and put $Y=\ker(S-T)$. It
follows from $Y=\{x\in X\mid Sx=Tx\}$ that $Y$ is a sublattice; it is
uniformly closed because $S-T$ is order bounded and, therefore,
uniformly continuous.

\begin{corollary}
  Uniformly closed sublattices of codimension at most $n$ in $X$ are
  exactly the kernels of differences of two lattice homomorphisms from
  $X$ to $\mathbb R^n$.
\end{corollary}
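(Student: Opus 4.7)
The plan is to package the representation from the immediately preceding corollary into a pair of $\mathbb{R}^n$-valued lattice homomorphisms, and conversely use a dimension count on the image of $S-T$.

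For the forward direction, suppose $Y$ is a uniformly closed sublattice of codimension $m\le n$ in $X$. By the preceding corollary, we may write
\begin{displaymath}
  Y=\bigcap_{i=1}^m\ker(\varphi_{2i-1}-\varphi_{2i}),
\end{displaymath}
where $\varphi_1,\dots,\varphi_{2m}\in X^h\cup\{0\}$ are lattice homomorphisms (or zero). To bring the number of blocks up to $n$, I would set $\varphi_{2i-1}=\varphi_{2i}=0$ for $m<i\le n$; this preserves the intersection because each extra kernel is all of $X$. Now define $S,T\colon X\to\mathbb R^n$ by $(Sx)_i=\varphi_{2i-1}(x)$ and $(Tx)_i=\varphi_{2i}(x)$. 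Since lattice operations in $\mathbb R^n$ are coordinate-wise and each $\varphi_{2i-1}$ (resp.\ $\varphi_{2i}$) is a lattice homomorphism into $\mathbb R$, both $S$ and $T$ are lattice homomorphisms. Finally,
\begin{displaymath}
  \ker(S-T)=\bigl\{x\in X\mid\varphi_{2i-1}(x)=\varphi_{2i}(x)\text{ for all }i\bigr\}
  =\bigcap_{i=1}^n\ker(\varphi_{2i-1}-\varphi_{2i})=Y.
\end{displaymath}

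For the converse, let $S,T\colon X\to\mathbb R^n$ be lattice homomorphisms and set $Y=\ker(S-T)$. The remark preceding the corollary already supplies both that $Y$ is a sublattice (it is the set where $S$ and $T$ agree) and that it is uniformly closed (because $S-T$ is order bounded, hence uniformly continuous). The codimension estimate is a purely linear-algebraic observation: $S-T$ is a linear map into $\mathbb R^n$, so
\begin{displaymath}
  \codim Y=\dim X/\ker(S-T)=\dim\Range(S-T)\le n,
\end{displaymath}
which is the required bound.

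There is no real obstacle here; the only point that needs a touch of care is to remember that the preceding corollary produces exactly $n$ blocks for codimension $n$, so to cover the at-most-$n$ case I have to pad with the zero homomorphism, which is legitimate because zero is vacuously a lattice homomorphism and does not change the intersection. Everything else follows from the prior corollary, the pre-corollary remark, and the rank-nullity identity.
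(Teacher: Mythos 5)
Your proof is correct and follows essentially the same route as the paper: both directions reduce to the preceding corollary (packaging the $n$ kernel conditions into coordinatewise maps $S,T\colon X\to\mathbb R^n$) and to the remark before the statement, which already establishes that $\ker(S-T)$ is a uniformly closed sublattice. The paper dismisses the converse as trivial, whereas you spell out the rank--nullity bound on the codimension and the zero-padding for codimension strictly less than $n$; these are harmless elaborations of the same argument.
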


\begin{proof}
  Let $Y$ be a uniformly closed sublattice of codimension at most $n$
  in $X$. Then $Y=\bigcap_{i=1}^n\ker(\varphi_i-\psi_i)$, where
  $\varphi_i$ and $\psi_i$ are lattice homomorphisms in $X^\sim$ as
  $i=1,\dots,n$. Define $S,T\colon X\to\mathbb R^n$ via
  \begin{displaymath}
    Sx=\bigl(\varphi_1(x),\dots,\varphi_n(x)\bigr)
    \quad\mbox{and}\quad
    Tx=\bigl(\psi_1(x),\dots,\psi_n(x)\bigr).
  \end{displaymath}
  It is easy to see that $S$ and $T$ are lattice homomorphisms and
  $Y=\ker(S-T)$. The converse is trivial.
\end{proof}

\begin{question}
  Is every uniformly closed sublattice the kernel of the difference of
  two lattice homomorphisms? (Cf. Question~\ref{q:ussubs}.)
\end{question}

\bigskip

We would like to mention two results that imply that many important
classes of vector lattices lack uniformly closed sublattices of finite
codimension. Recall that a sublattice $Y$ of $X$ is \term{order dense}
if for every $0<x\in X$ there exists $y\in Y$ such that $0<y\le x$.

\begin{proposition}[\cite{Wojtowicz:98}]
  If $X$ is non-atomic then every finite codimensional sublattice is
  order dense.
\end{proposition}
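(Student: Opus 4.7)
The plan is to argue by contradiction via Proposition~\ref{sublat-disj-2m}. Suppose $Y$ is a sublattice of codimension $n$ in $X$ which fails to be order dense. Then there exists $0<x\in X$ such that no $y\in Y$ satisfies $0<y\le x$; the goal is to produce too many disjoint vectors in $X\setminus Y$.

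The heart of the argument is the following splitting claim: in a non-atomic Archimedean vector lattice, for every $0<x$ and every $k\in\mathbb{N}$, there exist pairwise disjoint vectors $0<x_1,\dots,x_k\le x$. I would prove this by induction on $k$. The base case $k=1$ is trivial. For the inductive step, given pairwise disjoint $0<x_1,\dots,x_k\le x$, the element $x_k$ is not an atom (because $X$ has no atoms at all), so there exist $0<u,v\le x_k$ with $u\wedge v=0$. Since $x_i\wedge x_k=0$ for $i<k$ and $u,v\le x_k$, we have $x_i\wedge u=x_i\wedge v=0$ for $i<k$; hence $x_1,\dots,x_{k-1},u,v$ is a family of $k+1$ pairwise disjoint positive vectors below $x$.

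Applying this claim with $k=2n+1$, I obtain pairwise disjoint $0<x_1,\dots,x_{2n+1}\le x$. By the standing assumption, none of them lies in $Y$ (each satisfies $0<x_i\le x$, and no such element of $Y$ exists). Thus $\{x_1,\dots,x_{2n+1}\}$ is a disjoint collection of $2n+1$ vectors in $X\setminus Y$, contradicting Proposition~\ref{sublat-disj-2m}, which bounds the size of any such collection by $2n$.

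The only real obstacle is the splitting lemma, and it is entirely elementary once one uses the definition of non-atomicity together with the fact that any positive element dominated by a non-atomic vector inherits non-atomicity (which here follows immediately from the global assumption that $X$ has no atoms). Note that no uniform-closedness or Archimedean representation is required beyond what is implicit in the statement, so the argument applies to every sublattice of finite codimension, as required.
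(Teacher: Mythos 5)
Your argument is correct, but it is genuinely different from the paper's. The paper's proof is a two-line linear-algebra argument: if $Y$ has codimension $n$ and fails to be order dense at some $0<x$, then $I_x\cap Y=\{0\}$ (any nonzero $y\in I_x\cap Y$ would give $0<\tfrac{1}{\lambda}\abs{y}\le x$ with $\tfrac{1}{\lambda}\abs{y}\in Y$), so $I_x$ injects into $X/Y$ and $\dim I_x\le n$; a nonzero finite-dimensional Archimedean vector lattice is lattice isomorphic to some $\mathbb R^k$ and hence has an atom, and an atom of the ideal $I_x$ is an atom of $X$ --- contradicting non-atomicity. You instead route the contradiction through Proposition~\ref{sublat-disj-2m}, manufacturing $2n+1$ pairwise disjoint positive vectors below $x$ via a splitting lemma; since none of them can lie in $Y$, the $2n$ bound is violated. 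Your splitting lemma is sound: the induction is clean, and the step ``$x_k$ is not an atom, hence admits a nontrivial disjoint pair $0<u,v\le x_k$'' is immediate if one takes ``atom'' to mean an element with no nontrivial disjoint decomposition below it, and follows from the standard equivalence with the ``discrete element'' definition in the Archimedean setting (which is the paper's standing hypothesis). The paper's route buys brevity and independence from Proposition~\ref{sublat-disj-2m}; yours isolates the combinatorial content (a sublattice of codimension $n$ cannot miss more than $2n$ disjoint vectors) and makes explicit the well-known fact that non-atomic vector lattices admit arbitrarily large disjoint families under any positive element, which is of independent use.
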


\begin{proof}
  Assume that $Y$ is a finite codimensional sublattice but is not
  order dense. Then there exists $0<x\in X$ such that
  $I_x\cap Y=\{0\}$. It follows that $\dim I_x<\infty$. Being a finite
  dimensional vector lattice, $I_x$ and, therefore, $X$ admits an
  atom; a contradiction.
\end{proof}

Note that there exist proper finite codimensional uniformly closed
sublattices that are order dense. For example, consider the sublattice
of $C[0,1]$ consisting of all functions that vanish at $0$.  However,
no proper sublattice is both order closed and order dense. Recall that
if $Y$ is an order dense sublattice then $x=\sup[0,x]\cap Y$ for every
$x\in X_+$. It follows that an order dense sublattice of an order
continuous Banach lattice has to be norm dense. We can now easily
deduce the following result of \cite{Abramovich:90a}:

\begin{corollary}[\cite{Abramovich:90a}]\label{nonatom-ocont}
  Let $X$ be a non-atomic order continuous Banach lattice. Then $X$
  has no proper closed sublattices of finite codimension.
\end{corollary}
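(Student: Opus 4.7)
The plan is to chain together the preceding Wojtowicz proposition with the remarks made just before the corollary. Suppose, for contradiction, that $Y$ is a proper closed sublattice of $X$ of finite codimension. Since $X$ is non-atomic, the Wojtowicz proposition guarantees that $Y$ is order dense in $X$, i.e., for every $x \in X_+$ there is some $y \in Y$ with $0 < y \le x$.

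Next I would upgrade order density to norm density. As noted in the paragraph immediately preceding the corollary, if $Y$ is order dense then $x = \sup\bigl([0,x]\cap Y\bigr)$ for every $x \in X_+$. The supremum is the limit of an increasing net in $Y$, so by order continuity of the norm this net converges in norm to $x$. Hence every positive element of $X$ lies in the norm closure of $Y$, and splitting an arbitrary element into its positive and negative parts shows $\overline{Y}^{\norm{\cdot}} = X$.

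Finally, since $X$ is a Banach lattice, norm closed and uniformly closed subsets coincide (as recalled in Section~4 of the paper), so $Y$ is norm closed. But then $Y = \overline{Y} = X$, contradicting properness. This completes the proof.

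The argument is essentially bookkeeping once the two ingredients (Wojtowicz's proposition and the order-dense/order-continuous implies norm-dense principle) are in place; I do not anticipate any real obstacle, as both ingredients have already been recorded in the excerpt.
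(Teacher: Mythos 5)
Your argument is correct and is exactly the paper's intended proof: the paper states the Wojtowicz proposition, then records in the preceding paragraph that an order dense sublattice of an order continuous Banach lattice is norm dense (via $x=\sup[0,x]\cap Y$ and order continuity), and deduces the corollary from these two facts just as you do. You have merely written out the bookkeeping the paper leaves implicit.
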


We would like to mention an important corollary of the preceding
result:

\begin{corollary}[\cite{Abramovich:90a,Wojtowicz:98}]
  If $X$ is a non-atomic order continuous Banach lattice then $X^*$
  is non-atomic.
\end{corollary}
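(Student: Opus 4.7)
The plan is to argue by contrapositive using Corollary~\ref{nonatom-ocont}: we show that if $X^*$ has an atom, then $X$ has a proper closed sublattice of finite codimension, which contradicts Corollary~\ref{nonatom-ocont}.

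First I would recall that, because $X$ is a Banach lattice, every continuous linear functional is order bounded, so $X^*\subseteq X^\sim$; moreover $X^*$ is an order ideal (in fact a band) in $X^\sim$, so order intervals computed in $X^*$ coincide with those computed in $X^\sim$. Consequently, any atom of $X^*$ is an atom of $X^\sim$, and by the fact recalled in the paragraph preceding Proposition~\ref{sublat-codim-1} (atoms of $X^\sim$ are exactly the nonzero lattice homomorphisms, up to positive scalar), an atom of $X^*$ is a nonzero positive lattice homomorphism $\varphi\in X^*$.

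Now I would assume for contradiction that $X^*$ has an atom $\varphi$. Then $\ker\varphi$ is a sublattice of $X$ (because $\varphi$ is a lattice homomorphism), it is norm closed (because $\varphi\in X^*$), and it has codimension $1$ in $X$. Since $X$ is a non-atomic order continuous Banach lattice, Corollary~\ref{nonatom-ocont} rules out such a subspace, yielding the required contradiction. Hence $X^*$ contains no atoms, i.e., $X^*$ is non-atomic.

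There is no serious obstacle here; the argument is essentially a one-line reduction once the atoms/lattice-homomorphisms correspondence and the embedding $X^*\hookrightarrow X^\sim$ are in place. The only point worth stating carefully is the passage from ``atom of $X^*$'' to ``atom of $X^\sim$'', which is automatic from $X^*$ being an order ideal of $X^\sim$; everything else is already packaged in Corollary~\ref{nonatom-ocont}.
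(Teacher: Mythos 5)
Your proof is correct and follows essentially the same route as the paper: an atom of $X^*$ is a lattice homomorphism, so its kernel is a proper closed sublattice (indeed an ideal) of codimension~1, contradicting Corollary~\ref{nonatom-ocont}. Your extra care in passing from ``atom of $X^*$'' to ``atom of $X^\sim$'' via the ideal embedding is a welcome clarification but does not change the argument.
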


\begin{proof}
  If $\varphi\in X^*$ is an atom, it is a lattice homomorphism, hence
  $\ker\varphi$ is a closed ideal of codimension 1 in $X$.
\end{proof}

\section{Finite codimensional ideals}

It was observed in II.5.3, Corollary~3 of~\cite{Schaefer:74} that
every ideal of co-dimension 1 in a Banach lattice is
closed. In~\cite{Abramovich:90a}, this result was extended to
finite-codimensional ideals of F-lattices. We are now going to extend
it to uniformly closed ideals.

\begin{proposition}
  Every ideal of codimension 1 is uniformly closed.
\end{proposition}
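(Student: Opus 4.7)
The plan is to exploit the hypothesis that the codimension is exactly one, so that the quotient $X/J$ is as simple as possible. Since $J$ is an ideal, the introduction of the paper (citing Theorem~60.2 of~\cite{Luxemburg:71}) guarantees that $X/J$ is a vector lattice and that the quotient map $q\colon X\to X/J$ is a lattice homomorphism. The same remark says that $X/J$ being Archimedean is equivalent to $J$ being uniformly closed, so we just need to verify that the one-dimensional vector lattice $X/J$ is Archimedean.

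The key observation is that any one-dimensional vector lattice is lattice isomorphic to $\mathbb R$ with its usual order. Indeed, if $V=\mathbb R v$ is a partially ordered vector space of dimension one which is a lattice, then the positive cone must be $\mathbb R_+v$ (up to replacing $v$ by $-v$), since a trivial or total cone would not produce a nontrivial lattice structure on a one-dimensional space. In particular $V$ is Archimedean. Applying this to $X/J$, we conclude that $X/J$ is Archimedean, and then Theorem~60.2 in~\cite{Luxemburg:71} (as recorded in the preliminaries) immediately gives that $J$ is uniformly closed.

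An alternative, slightly more concrete, route is to identify the quotient map $q$, under the isomorphism $X/J\cong\mathbb R$, with a real-valued lattice homomorphism $\varphi$ on $X$ whose kernel is $J$. Such a $\varphi$ is positive, hence order bounded, so Proposition~\ref{obdd-ker-u-closed} yields that $\ker\varphi=J$ is uniformly closed. Either way, there is no real obstacle here: the entire content is that a one-dimensional ordered quotient is automatically Archimedean, and the machinery already developed in the paper takes care of the rest.
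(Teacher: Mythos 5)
Your proof is correct, and your ``alternative'' route is precisely the paper's own proof: the quotient map, viewed as a map $X\to X/J\cong\mathbb R$, is a real-valued lattice homomorphism, hence positive and order bounded, so Corollary~\ref{u-subsp-codim-1} (equivalently Proposition~\ref{obdd-ker-u-closed}) applies. Your primary route --- observing that a one-dimensional vector lattice must be lattice isomorphic to $\mathbb R$ and hence Archimedean, then invoking Theorem~60.2 of \cite{Luxemburg:71} --- is an equivalent reformulation resting on the same key fact, so the two arguments are essentially identical.
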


\begin{proof}
  The quotient map may be viewed as a lattice homomorphism from $X$ to
  $\mathbb R$. Now apply Corollary~\ref{u-subsp-codim-1}.
\end{proof}

The following result and example are motivated by~\cite{Abramovich:90a}.

\begin{proposition}\label{idea-fcodim-ucl}
  If $X$ is uniformly complete then every finite codimensional ideal
  in $X$ is uniformly closed.
\end{proposition}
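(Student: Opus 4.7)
The plan is to reduce the problem to the Banach lattice setting via the principal-ideal construction. Let $J$ be an ideal of $X$ of codimension $n$. By the equivalent characterization of uniform closedness given earlier in the paper, it suffices to show that $J\cap I_e$ is $\norm{\cdot}_e$-closed in $I_e$ for every $e\in X_+$; so fix such an $e$.

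The intersection $J\cap I_e$ is readily seen to be an ideal of $I_e$, and by the elementary codimension facts recalled in the preliminaries (namely, that $\codim_G(F\cap G)\le\codim_E F$), its codimension in $I_e$ is at most $n$, hence finite. The uniform completeness hypothesis on $X$ means precisely that $\bigl(I_e,\norm{\cdot}_e\bigr)$ is norm complete, so it is a Banach lattice; indeed, by Kakutani--Krein it is lattice isometric to $C(K)$ for some compact Hausdorff space $K$. Thus the problem reduces to showing that every finite codimensional ideal in a Banach lattice is norm closed.

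This last statement is precisely the Abramovich--Lipecki result cited in the introduction (\cite{Abramovich:90a,Abramovich:90b}), which I would invoke as a black box; applying it to $J\cap I_e$ inside the Banach lattice $(I_e,\norm{\cdot}_e)$ gives the required closedness, and ranging over $e$ yields uniform closedness of $J$. The main obstacle is this Banach lattice lemma; if a self-contained argument is preferred, one can instead work inside $C(K)\cong(I_e,\norm{\cdot}_e)$ and use Corollary~\ref{closed-ideal}: the norm closure of a finite codimensional ideal in $C(K)$ must have the form $\{f:f_{|F}=0\}$ for some closed $F\subseteq K$, and since the quotient by this closure is the finite dimensional space $C(F)$, the set $F$ is finite; a Urysohn/partition-of-unity argument then produces enough elements in the original ideal to match the codimension of its closure, forcing equality. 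Either route completes the proof.
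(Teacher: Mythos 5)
Your reduction to the principal ideals is correct as far as it goes: $J$ is uniformly closed iff $J\cap I_e$ is $\norm{\cdot}_e$-closed for every $e\in X_+$, the set $J\cap I_e$ is an ideal of $I_e$ of codimension at most $n$, and uniform completeness makes $(I_e,\norm{\cdot}_e)$ lattice isometric to a $C(K)$. This localization is genuinely different from the paper's argument, which instead runs an induction on the codimension using a chain of ideals $J\subsetneq H\subsetneq X$ and the fact that relative uniform convergence in an ideal $H$ agrees with that inherited from $X$. The problem is what you do after the reduction. Invoking ``every finite codimensional ideal in a Banach lattice is closed'' as a black box outsources the entire difficulty: that statement \emph{is} the theorem (the paper derives it as a corollary of Proposition~\ref{idea-fcodim-ucl}, so inside this paper your citation would be circular), whereas the passage from uniformly complete vector lattices to Banach lattices is the easy part.

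The self-contained alternative has a concrete gap at ``forcing equality.'' Let $J_0=J\cap I_e$ viewed inside $C(K)$ and let $F=\ker J_0$. Corollary~\ref{closed-ideal} gives $\overline{J_0}=J_F$, and finiteness of $\codim J_F$ forces $F=\{t_1,\dots,t_m\}$ to be finite; a partition of unity then gives $C(K)=J_F\oplus\Span\{e_1,\dots,e_m\}$ with $e_i(t_j)=\delta_{ij}$. But this only computes the codimension of the \emph{closure}; it produces no new elements of $J_0$ itself. To conclude $C(K)=J_0+\Span\{e_1,\dots,e_m\}$ you would need exactly the inclusion $J_F\subseteq J_0$ that you are trying to prove, and density of $J_0$ in $J_F$ is not enough (a dense proper ideal of a closed ideal is a perfectly common object --- the point is that it cannot have \emph{finite} codimension, and that is where the real argument lives). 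The standard way to close the gap is: an ideal of codimension one is the kernel of a real-valued lattice homomorphism (Proposition~\ref{ker-ideal}), hence of a positive and therefore norm-continuous functional on a Banach lattice, hence closed; then one inducts along a chain of ideals $J_0=J_1\subsetneq J_2\subsetneq\dots\subsetneq J_{n+1}=C(K)$, each closed in the next. That is essentially the paper's proof transplanted into $C(K)$, so your route does not in the end avoid it.
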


\begin{proof}
  Let $J$ be an ideal of finite codimension $n$.  Proof is by
  induction on $n$. For $n=1$, this is the preceding
  proposition. Suppose $n>1$. We can then find an intermediate ideal
  $H$ such that $J\subsetneq H\subsetneq X$: one can take the preimage
  under the quotient map of any proper non-trivial ideal in $X/J$. By
  induction hypothesis, $J$ is uniformly closed in $H$ and $H$ is
  uniformly closed in $X$. Since uniform convergence for sequences in
  $H$ agrees with that inherited from $X$ by Proposition~2.12
  in~\cite{Taylor:20} and the remark following it, we conclude that
  $J$ is uniformly closed in $X$.
\end{proof}

It is observed in Lemma~3 of \cite{Abramovich:90a} that for every ideal
$J$ in $X$ of codimension $n$ there exists a chain of ideals
$J=J_1\subsetneq J_2\subsetneq\dots\subsetneq J_n=X$.

\begin{example}
  \emph{An ideal of codimension 2 that is not uniformly closed.} Let
  $X$ be the vector lattice of all piece-wise affine functions on
  $[0,1]$ and
  \begin{math}
    J=\bigl\{f\in X\mid\exists r>0\ f\text{ vanishes on }[0,r]\bigr\}.
  \end{math}
  Clearly, $J$ is an ideal; its codimension equals 2 as
  $X=J+\Span\{\one,f_0\}$, where $f_0(t)=t$. It can be easily verified
    that $f_0$ is in the uniform closure of $J$, hence $J$ is
    not uniformly closed.

    Let $Y=\bigl\{f\in X\mid f(0)=0\bigr\}$. Then
    $J\subsetneq Y\subsetneq X$ is a chain of ideals, hence $J$ is
    uniformly closed in $Y$ and $Y$ is uniformly closed in $X$, yet
    $J$ is not uniformly closed in $X$.
\end{example}

Since a subset of a Banach lattice is closed if and only if it is
uniformly closed, Proposition~\ref{idea-fcodim-ucl} and
Corollary~\ref{nonatom-ocont} easily yield the following two results
from~\cite{Abramovich:90a}:

\begin{corollary}[\cite{Abramovich:90a}]
  Every finite-codimensional ideal in a Banach lattice is closed.
\end{corollary}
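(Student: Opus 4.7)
The plan is to reduce the statement to Proposition~\ref{idea-fcodim-ucl}, which guarantees that every finite-codimensional ideal in a uniformly complete vector lattice is uniformly closed. Two ingredients make this reduction work for a Banach lattice $X$: first, $X$ is uniformly complete, and second, norm-closed and uniformly closed sets coincide in $X$, as was already recorded in the preliminary discussion of uniform convergence.

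To verify that $X$ is uniformly complete, I would fix $e\in X_+$ and check that $\bigl(I_e,\norm{\cdot}_e\bigr)$ is a Banach space. From $\abs{x}\le\norm{x}_e\, e$ it follows that $\norm{x}\le\norm{e}\cdot\norm{x}_e$, so any $\norm{\cdot}_e$-Cauchy sequence $(x_n)$ in $I_e$ is automatically norm-Cauchy in $X$; let $x$ be its norm-limit. For $n,m$ large we have $\abs{x_n-x_m}\le\varepsilon e$, and since the positive cone of $X$ is norm-closed, letting $m\to\infty$ yields $\abs{x_n-x}\le\varepsilon e$. This gives $x\in I_e$ and $\norm{x_n-x}_e\to 0$, proving completeness.

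With uniform completeness established, any ideal $J$ of finite codimension in $X$ is uniformly closed by Proposition~\ref{idea-fcodim-ucl}, hence norm-closed since the two notions agree in a Banach lattice. I do not anticipate a genuine obstacle: both inputs are already available in the paper, and the only substantive point beyond invoking earlier results is the folklore completeness of $\bigl(I_e,\norm{\cdot}_e\bigr)$ for $X$ a Banach lattice, which follows cleanly from the continuity of the lattice operations and the closedness of the positive cone.
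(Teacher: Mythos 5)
Your proposal is correct and follows essentially the same route as the paper, which deduces the corollary directly from Proposition~\ref{idea-fcodim-ucl} together with the fact that norm-closed and uniformly closed subsets of a Banach lattice coincide; the only addition is that you spell out the standard verification that a Banach lattice is uniformly complete, which the paper takes for granted, and your argument for that step is sound.
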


\begin{corollary}[\cite{Abramovich:90a}]\label{no-ideals-nonat}
  A non-atomic order continuous Banach lattice admits no proper ideals
  of finite codimension.
\end{corollary}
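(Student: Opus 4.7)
The plan is to chain together two earlier results: Proposition~\ref{idea-fcodim-ucl} (finite codimensional ideals in a uniformly complete vector lattice are uniformly closed) and Corollary~\ref{nonatom-ocont} (non-atomic order continuous Banach lattices have no proper closed sublattices of finite codimension). The bridge between them is the fact, established in Section~4, that in a Banach lattice the uniformly closed sets coincide with the norm closed sets.

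In more detail: let $X$ be a non-atomic order continuous Banach lattice, and suppose for contradiction that $J$ is a proper ideal of $X$ with $\codim J<\infty$. Since $X$ is a Banach lattice, it is norm complete and hence uniformly complete, so Proposition~\ref{idea-fcodim-ucl} applies and $J$ is uniformly closed. But for a Banach lattice, uniform closedness is equivalent to norm closedness (by the observation that every norm convergent sequence has a uniformly convergent subsequence, cited from \cite{Taylor:20}), so $J$ is a norm closed sublattice of finite codimension in $X$. This directly contradicts Corollary~\ref{nonatom-ocont}, which says no such proper sublattice exists. Hence no such $J$ can exist.

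There is no real obstacle here; the statement is essentially a two-line consequence of results already proved. The only thing to verify is that the hypotheses line up, namely that ``uniformly complete'' holds (trivial for any Banach lattice) and that an ideal is in particular a sublattice (so that Corollary~\ref{nonatom-ocont} applies once norm closedness is established).
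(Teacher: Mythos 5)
Your proof is correct and follows exactly the paper's intended argument: the paper derives this corollary by combining Proposition~\ref{idea-fcodim-ucl} with the equivalence of uniform and norm closedness in Banach lattices, then invoking Corollary~\ref{nonatom-ocont}. Nothing is missing; the hypotheses line up as you checked.
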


Let $J$ be a closed ideal of codimension $n$ in $C(\Omega)$. By
Theorem~\ref{CK-sublat-closure} and Corollary~\ref{closed-ideal},
there exist distinct $n$ points $t_1,\dots,t_n$ such that $f\in J$ if
and only if $f$ vanishes at these points. That is,
$J=\bigcap_{i=1}^n\ker\delta_{t_i}$. Note that $\ker\delta_{t_i}$ is an
ideal of codimension 1. This is generalized in the following
proposition that we borrow from \cite{Abramovich:90a,Abramovich:90b};
we include it for the sake of completeness.

\begin{proposition}
  Let $J$ be a uniformly closed ideal of $X$ of codimension $n$. There
  exist unique (up to a permutation) ideals $H_1,\dots,H_n$ of
  codimension $1$ such that $J=\bigcap_{i=1}^nH_i$.
\end{proposition}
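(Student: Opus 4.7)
The plan is to pass to the quotient $X/J$ and use the fact that a finite-dimensional Archimedean vector lattice is lattice isomorphic to $\mathbb{R}^n$.

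First, because $J$ is uniformly closed, the quotient $X/J$ is Archimedean by Theorem~60.2 in~\cite{Luxemburg:71}, and it has dimension $n$. I would then observe that every $n$-dimensional Archimedean vector lattice $V$ is lattice isomorphic to $\mathbb{R}^n$: fix a basis $v_1,\dots,v_n$ of $V$, set $e=\abs{v_1}+\dots+\abs{v_n}$, which is a strong order unit, and apply Kakutani--Krein to identify $(V,\norm{\cdot}_e)$ with a dense (hence all of) $C(K)$ for some compact Hausdorff $K$; since $\dim C(K)=n$, the space $K$ must be a discrete $n$-point set, giving $V\cong\mathbb{R}^n$. Let $Q\colon X\to X/J$ be the quotient map and identify $X/J$ with $\mathbb{R}^n$ via a lattice isomorphism.

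Next I would classify the ideals of $\mathbb{R}^n$: if $I$ is an ideal and $x\in I$, then $\abs{x_i}e_i\le\abs{x}\in I$ forces $e_i\in I$ whenever $x_i\ne 0$, so $I$ is the linear span of the $e_i$ it contains. Consequently, the codimension~1 ideals of $\mathbb{R}^n$ are exactly the $n$ coordinate hyperplanes $K_i=\{x\in\mathbb{R}^n:x_i=0\}$. Setting $H_i:=Q^{-1}(K_i)$ produces $n$ ideals of codimension~1 in $X$ containing $J$; since $Q$ is surjective and $\bigcap_{i=1}^n K_i=\{0\}$, we get $\bigcap_{i=1}^n H_i=Q^{-1}(\{0\})=J$, which proves existence.

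For uniqueness, suppose $J=\bigcap_{i=1}^n H_i'$ with each $H_i'$ an ideal of codimension~1 in $X$. Each $H_i'\supseteq J$, so $Q(H_i')$ is an ideal of codimension~$1$ in $X/J\cong\mathbb{R}^n$, hence equals $K_{\sigma(i)}$ for some $\sigma(i)\in\{1,\dots,n\}$. The identity $\bigcap_i Q(H_i')=Q(J)=\{0\}$ forces $\sigma$ to be a permutation (otherwise some coordinate is free in every $K_{\sigma(i)}$ and the intersection is nonzero), whence $H_i'=Q^{-1}(K_{\sigma(i)})=H_{\sigma(i)}$. The main nontrivial step is the identification $X/J\cong\mathbb{R}^n$; once that is in hand, both existence and uniqueness reduce to the transparent classification of ideals in $\mathbb{R}^n$.
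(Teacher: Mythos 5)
Your proof is correct and follows essentially the same route as the paper: pass to the Archimedean quotient $X/J\cong\mathbb{R}^n$, observe that the codimension-one ideals there are exactly the $n$ coordinate hyperplanes, and pull back along $Q$ for both existence and uniqueness. You simply spell out the details (the identification of a finite-dimensional Archimedean vector lattice with $\mathbb{R}^n$ and the classification of its ideals) that the paper leaves implicit.
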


\begin{proof}
  Consider the quotient map $Q\colon X\to X/J$.
  Since $J$ is uniformly closed, $X/J$ is an Archimedean vector
  lattice of dimension $n$, hence is lattice isomorphic to
  $\mathbb R^n$. Therefore, there are exactly $n$ distinct ideals
  $E_1,\dots,E_n$ of codimension $1$ in $X/J$ and
  $\bigcap_{i=1}^nE_i=\{0\}$. Now take $H_i=Q^{-1}(E_i)$ as
  $i=1,\dots,n$.

  To prove uniqueness, let $H$ be an ideal of codimension 1 in $X$ such
  that $J\subseteq H$. It is easy to see that $Q(H)$ is an ideal of
  codimension 1 in $X/J$, hence $H=Q^{-1}(E_i)$ for some $i$.
\end{proof}

\medskip

We would like to thank the reviewers for valuable comments and suggestions.

\section*{Funding}
The second author was supported by NSERC.

\end{document}